\tikzset{vertex/.style={circle,draw,fill,inner sep=0pt,minimum size=1mm}}
\tikzset{vertex/.style={circle,draw,fill,inner sep=0pt,minimum size=1mm}}
\theoremstyle{plain}
\newtheorem{thm}{Theorem}
\newtheorem{lem}[thm]{Lemma}
\newtheorem{prop}[thm]{Proposition}
\newtheorem{cor}[thm]{Corollary}
\theoremstyle{definition}
\newtheorem{definition}[thm]{Definition}
\newtheorem{exl}[thm]{Example}
\newtheorem{remark}[thm]{Remark}
\newtheorem{question}[thm]{Question}
\numberwithin{thm}{section}
\newcommand{\adj}{\leftrightarrow}
\newcommand{\adjeq}{\leftrightarroweq}
\DeclareMathOperator{\id}{id}
\def\N{{\mathbb N}}
\DeclareMathOperator{\Fix}{Fix}
\newcommand{\Z}{\mathbb{Z}}
\begin{document}
\title{Fixed Point Sets in Digital Topology, 1}
\author{Laurence Boxer
         \thanks{
    Department of Computer and Information Sciences,
    Niagara University,
    Niagara University, NY 14109, USA;
    and Department of Computer Science and Engineering,
    State University of New York at Buffalo
    email: boxer@niagara.edu
    }
\and{P. Christopher Staecker
     \thanks{
     Department of Mathematics, Fairfield 
     University, Fairfield, CT 06823-5195, USA
     email: cstaecker@fairfield.edu}
}
}
\date{}
\maketitle

\begin{abstract}    
    In this paper,
we examine some properties of the
fixed point set of a digitally
continuous function. The digital setting requires new methods that are not analogous to
those of classical topological fixed point theory, and we obtain results that
often differ greatly from standard results in classical topology.

We introduce several measures related to fixed points
for continuous self-maps on
digital images, and study their
properties. Perhaps the most
important of these is
the fixed point spectrum $F(X)$ of a digital image: that is, the set of all numbers that
can appear as the number of fixed points for some continuous self-map. 
We give a complete computation of $F(C_n)$ where $C_n$ is the digital cycle of $n$ points. 
For other digital images, we show that, if $X$ has at least 4 points, then $F(X)$ always contains the numbers 0, 1, 2, 3, and the cardinality of $X$. 
We give several examples, including $C_n$, in which $F(X)$ does not equal $\{0,1,\dots,\#X\}$. 

We examine how fixed point sets are
affected by rigidity, retraction, deformation
retraction, and the formation of
wedges and Cartesian products.
We also study how fixed point sets in digital images can be arranged; e.g.,
in some cases the fixed point set is always connected.
\end{abstract}

\section{Introduction}
Digital images are often used as
mathematical models of real-world objects.
A digital model of the notion of a continuous function,
borrowed from the study of topology, is
often useful for the study of  digital images.
However, a digital image is typically
a finite, discrete point set. Thus,
it is often necessary to study digital
images using methods not directly
derived from topology.
In this paper, we
introduce several
such methods to study
properties of the
fixed point set of a
continuous self-map
on a digital image.

\section{Preliminaries}
Let $\N$ denote the set of natural numbers; 
and $\Z$, the set of integers. $\#X$ will be
used for the number of elements of a set~$X$.

\subsection{Adjacencies}
A digital image is a pair $(X,\kappa)$ where
$X \subset \Z^n$ for some $n$ and $\kappa$ is
an adjacency on $X$. Thus, $(X,\kappa)$ is a graph
for which $X$ is the vertex set and $\kappa$ 
determines the edge set. Usually, $X$ is finite,
although there are papers that consider infinite $X$. Usually, adjacency reflects some type of
``closeness" in $\Z^n$ of the adjacent points.
When these ``usual" conditions are satisfied, one
may consider the digital image as a model of a
black-and-white ``real world" digital image in which
the black points (foreground) are represented by 
the members of $X$ and the white points 
(background) by members of $\Z^n \setminus X$.

We write $x \adj_{\kappa} y$, or $x \adj y$ when
$\kappa$ is understood or when it is unnecessary to
mention $\kappa$, to indicate that $x$ and
$y$ are $\kappa$-adjacent. Notations 
$x \adjeq_{\kappa} y$, or $x \adjeq y$ when
$\kappa$ is understood, indicate that $x$ and
$y$ are $\kappa$-adjacent or are equal.

The most commonly used adjacencies are the
$c_u$ adjacencies, defined as follows.
Let $X \subset \Z^n$ and let $u \in \Z$,
$1 \le u \le n$. Then for points
\[x=(x_1, \ldots, x_n) \neq (y_1,\ldots,y_n)=y\]
we have $x \adj_{c_u} y$ if and only if
\begin{itemize}
    \item for at most $u$ indices $i$ we have
          $|x_i - y_i| = 1$, and
    \item for all indices $j$, $|x_j - y_j| \neq 1$
          implies $x_j=y_j$.
\end{itemize}

The $c_u$-adjacencies are often denoted by the
number of adjacent points a point can have in the
adjacency. E.g.,
\begin{itemize}
\item in $\Z$, $c_1$-adjacency is 2-adjacency;
\item in $\Z^2$, $c_1$-adjacency is 4-adjacency and
      $c_2$-adjacency is 8-adjacency;
\item in $\Z^3$, $c_1$-adjacency is 8-adjacency,
      $c_2$-adjacency is 18-adjacency, and 
      $c_3$-adjacency is 26-adjacency.
\end{itemize}

We will often discuss the \emph{digital $n$-cycle}, the $n$-point image $C_n =\{x_0,\dots,x_{n-1}\}$ in which each $x_i$ is adjacent only to $x_{i+1}$ and $x_{i-1}$, and subscripts are always read modulo $n$.

The literature also contains several adjacencies
to exploit properties of Cartesian products
of digital images. These include the following.

\begin{definition}
\cite{Berge}
Let $(X,\kappa)$ and $(Y, \lambda)$ be digital
images. The {\em normal product adjacency} or
{\em strong adjacency} on $X \times Y$, denoted
$NP(\kappa, \lambda)$, is defined as follows.
Given $x_0, x_1 \in X$, $y_0, y_1 \in Y$ such that
\[p_0=(x_0,y_0) \neq (x_1,y_1)=p_1, \]
we have $p_0 \adj_{NP(\kappa,\lambda)} p_1$ if
and only if one of the following is valid:
\begin{itemize}
    \item $x_0 \adj_{\kappa} x_1$ and $y_0=y_1$, or
    \item $x_0 = x_1$ and $y_0 \adj_{\lambda} y_1$,
          or
    \item $x_0 \adj_{\kappa} x_1$ and
          $y_0 \adj_{\lambda} y_1$.
\end{itemize}
\end{definition}

\begin{thm}
{\rm \cite{BK12}}
\label{NPm+n}
Let $X \subset \Z^m$, $Y \subset \Z^n$. Then
\[(X \times Y, NP(c_m,c_n)) = (X \times Y, c_{m+n}),
\]
i.e., the $c_{m+n}$-adjacency on 
$X \times Y \subset \Z^{m+n}$ coincides with the
normal product adjacency based on $c_m$ and $c_n$.
\end{thm}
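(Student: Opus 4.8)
The plan is to observe that when the parameter of a $c_u$-adjacency equals the full ambient dimension, the first bullet of its definition (the bound on the number of coordinates differing by $1$) becomes vacuous, since a point of $\Z^n$ has only $n$ coordinates and so at most $n$ of them can satisfy $|x_i-y_i|=1$. Thus $c_n$-adjacency in $\Z^n$ reduces to the statement that $x \neq y$ together with $|x_i - y_i| \le 1$ for every $i$; equivalently, $\|x - y\|_\infty = 1$. I would first record this reduction for each of the three relevant maximal adjacencies: $c_m$ on $\Z^m$, $c_n$ on $\Z^n$, and $c_{m+n}$ on $\Z^{m+n}$, so that all three become the single relation ``distinct points at Chebyshev distance one.''

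Next I would split the $m+n$ coordinates of $\Z^{m+n}$ into the first $m$ (the $X$-block) and the last $n$ (the $Y$-block), writing $p_0 = (x_0, y_0)$ and $p_1 = (x_1, y_1)$ with $x_i \in X$, $y_i \in Y$. The Chebyshev norm respects this splitting: $\|p_0 - p_1\|_\infty = \max\{\|x_0 - x_1\|_\infty,\ \|y_0 - y_1\|_\infty\}$. For the forward implication, assume $p_0 \adj_{c_{m+n}} p_1$, so $p_0 \neq p_1$ and every coordinate difference is at most $1$; then each of $\|x_0 - x_1\|_\infty$ and $\|y_0 - y_1\|_\infty$ lies in $\{0,1\}$ and they are not both $0$. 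According to which of these is $0$, I land in exactly one of the three cases of $NP(c_m, c_n)$: the pattern $(1,0)$ gives the first case, $(0,1)$ the second, and $(1,1)$ the third, using the reductions above to convert each $\|\cdot\|_\infty = 1$ into the corresponding $c_m$- or $c_n$-adjacency. For the reverse implication I would check that each of the three $NP$ cases forces both blockwise norms to be at most $1$ with at least one equal to $1$, hence $\|p_0 - p_1\|_\infty = 1$ and $p_0 \adj_{c_{m+n}} p_1$.

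There is no serious obstacle here; the content lies entirely in the bookkeeping. The one point that deserves care is verifying that the three cases of the normal product adjacency correspond exactly to the three patterns $(1,0)$, $(0,1)$, $(1,1)$ of the pair $\bigl(\|x_0 - x_1\|_\infty,\ \|y_0 - y_1\|_\infty\bigr)$, with no overlap and no omission; this is precisely what makes the equivalence an ``if and only if'' rather than a one-sided containment. I would also state explicitly, at the outset, why the count condition is automatically met, since this is the crux that collapses all three adjacencies to the single Chebyshev-distance-one relation and renders the whole identification transparent.
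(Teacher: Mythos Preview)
Your argument is correct. The key reduction---that $c_k$-adjacency in $\Z^k$ (with the parameter equal to the ambient dimension) is exactly the relation $\|x-y\|_\infty = 1$---is valid, and once all three adjacencies are recast this way the equivalence with $NP(c_m,c_n)$ is the straightforward case analysis you describe.

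However, there is nothing to compare against: the paper does not prove this theorem. It is stated with a citation to \cite{BK12} and used as background, so the paper's ``own proof'' is simply the reference. Your write-up would serve as a self-contained proof should one be wanted; just be aware that in the paper as written the result is imported, not established.
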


Building on the normal product adjacency, we have
the following.

\begin{definition}
{\rm \cite{BxNormal}}
Given $u, v \in \N$, $1 \le u \le v$, and digital
images $(X_i, \kappa_i)$, $1 \le i \le v$, let
$X = \Pi_{i=1}^v X_i$. The adjacency
$NP_u(\kappa_1, \ldots, \kappa_v)$ for $X$ is
defined as follows. Given $x_i, x_i' \in X_i$, let
\[p=(x_1, \ldots, x_v) \neq (x_1', \ldots, x_v')=q. \]
Then $p \adj_{NP_u(\kappa_1, \ldots, \kappa_v)} q$
if for at least 1 and at most $u$ indices $i$ we have
$x_i \adj_{\kappa_i} x_i'$ and for all other
indices $j$ we have $x_j = x_j'$.
\end{definition}

Notice $NP(\kappa, \lambda)= NP_2(\kappa, \lambda)$
\cite{BxNormal}.

\subsection{Digitally continuous functions}
We denote by $\id$ or $\id_X$ the
identity map $\id(x)=x$ for all $x \in X$.

\begin{definition}
{\rm \cite{Rosenfeld, Bx99}}
Let $(X,\kappa)$ and $(Y,\lambda)$ be digital
images. A function $f: X \to Y$ is 
{\em $(\kappa,\lambda)$-continuous}, or
{\em digitally continuous} when $\kappa$ and
$\lambda$ are understood, if for every
$\kappa$-connected subset $X'$ of $X$,
$f(X')$ is a $\lambda$-connected subset of $Y$.
If $(X,\kappa)=(Y,\lambda)$, we say a function
is {\em $\kappa$-continuous} to abbreviate
``$(\kappa,\kappa)$-continuous."
\end{definition}

\begin{thm}
{\rm \cite{Bx99}}
A function $f: X \to Y$ between digital images
$(X,\kappa)$ and $(Y,\lambda)$ is
$(\kappa,\lambda)$-continuous if and only if for
every $x,y \in X$, if $x \adj_{\kappa} y$ then
$f(x) \adjeq_{\lambda} f(y)$.
\end{thm}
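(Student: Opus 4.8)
The plan is to prove both implications directly from the definition of digital continuity, using two elementary observations: a subset of at most two points is $\lambda$-connected precisely when its points coincide or are $\lambda$-adjacent, and connectedness of a subset is witnessed by $\kappa$-paths that lie entirely inside that subset.

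For the forward direction, I would assume $f$ is $(\kappa,\lambda)$-continuous and take arbitrary $x,y \in X$ with $x \adj_{\kappa} y$. Then $X' = \{x,y\}$ is a $\kappa$-connected subset of $X$, so by continuity $f(X') = \{f(x), f(y)\}$ is a $\lambda$-connected subset of $Y$. Since a set of at most two points is $\lambda$-connected exactly when its points are equal or $\lambda$-adjacent, this yields $f(x) \adjeq_{\lambda} f(y)$, as required.

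For the reverse direction, I would assume that $f$ sends every $\kappa$-adjacent pair to a $\lambda$-adjacent-or-equal pair, and let $X'$ be an arbitrary $\kappa$-connected subset of $X$. To show $f(X')$ is $\lambda$-connected, I would fix two arbitrary image points $f(x), f(y)$ with $x,y \in X'$. Because $X'$ is $\kappa$-connected, there is a $\kappa$-path $x = x_0 \adj_{\kappa} x_1 \adj_{\kappa} \cdots \adj_{\kappa} x_k = y$ lying inside $X'$. Applying the hypothesis to each consecutive pair gives $f(x_i) \adjeq_{\lambda} f(x_{i+1})$ for every $i$, so the sequence $f(x_0), f(x_1), \ldots, f(x_k)$ is a walk in $f(X')$ whose successive terms are equal or $\lambda$-adjacent.

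The only point requiring care is that this walk need not be a genuine $\lambda$-path, since some consecutive images may coincide; so the main obstacle is really a small bookkeeping step rather than a conceptual one. I would resolve it by deleting repeated consecutive terms from the walk to extract an honest $\lambda$-path in $f(X')$ joining $f(x)$ to $f(y)$. As $f(x), f(y)$ were arbitrary elements of $f(X')$, this shows $f(X')$ is $\lambda$-connected, and since $X'$ was an arbitrary $\kappa$-connected subset, $f$ is $(\kappa,\lambda)$-continuous, completing the proof.
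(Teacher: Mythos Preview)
Your proof is correct and is the standard argument for this characterization. Note, however, that the paper does not actually supply its own proof of this theorem: it is stated with a citation to \cite{Bx99} and no argument is given in the present paper, so there is nothing here to compare your approach against.
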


\begin{thm}
\label{composition}
{\rm \cite{Bx99}}
Let $f: (X, \kappa) \to (Y, \lambda)$ and
$g: (Y, \lambda) \to (Z, \mu)$ be continuous 
functions between digital images. Then
$g \circ f: (X, \kappa) \to (Z, \mu)$ is continuous.
\end{thm}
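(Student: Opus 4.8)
The plan is to prove continuity of $g \circ f$ directly from the definition, exploiting the fact that both $f$ and $g$ preserve connectivity. First I would let $X'$ be an arbitrary $\kappa$-connected subset of $X$, since establishing that $(g \circ f)(X')$ is a $\mu$-connected subset of $Z$ for every such $X'$ is exactly what $(\kappa,\mu)$-continuity of $g \circ f$ requires.

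The core of the argument is a two-step chaining of the hypotheses. Since $f$ is $(\kappa,\lambda)$-continuous and $X'$ is $\kappa$-connected, $f(X')$ is a $\lambda$-connected subset of $Y$. Then, since $g$ is $(\lambda,\mu)$-continuous and $f(X')$ is $\lambda$-connected, $g(f(X'))$ is a $\mu$-connected subset of $Z$. Observing that $g(f(X')) = (g \circ f)(X')$ finishes the argument, so $(g \circ f)(X')$ is $\mu$-connected, as needed.

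Alternatively, I could argue via the adjacency characterization stated immediately above. Given $x \adj_\kappa y$ in $X$, continuity of $f$ yields $f(x) \adjeq_\lambda f(y)$, and I would split into cases according to whether $f(x) = f(y)$ or $f(x) \adj_\lambda f(y)$. In the first case $g(f(x)) = g(f(y))$ trivially, while in the second continuity of $g$ gives $g(f(x)) \adjeq_\mu g(f(y))$; either way $(g \circ f)(x) \adjeq_\mu (g \circ f)(y)$, and the characterization theorem then yields continuity.

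I do not expect any genuine obstacle here: the statement is essentially the functoriality of connectivity preservation, and both approaches are short. The one point requiring mild care arises only in the second approach, namely the role of $\adjeq$ (adjacent-or-equal): the possibility $f(x) = f(y)$ must be handled separately, since $g$'s adjacency condition applies to adjacent pairs, not to a pair of equal points. The connectivity-based argument avoids even this subtlety, so it is the version I would present as the main proof.
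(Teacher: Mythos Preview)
Your proposal is correct; both the connectivity-preservation argument and the adjacency-characterization argument are valid and standard. Note, however, that the paper does not actually supply a proof of this theorem: it is stated with a citation to~\cite{Bx99} and used as a known result, so there is no in-paper proof to compare against.
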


A {\em path} is a continuous
function $r: [0,m]_{\Z} \to X$. 

We use the following notation. For a
digital image $(X,\kappa)$,
\[ C(X,\kappa) = \{f: X \to X \, | \,
   f \mbox{ is continuous}\}.
\]

\begin{definition}
{\rm (\cite{Bx99}; see also \cite{Khalimsky})}
\label{htpy-2nd-def}
Let $X$ and $Y$ be digital images.
Let $f,g: X \rightarrow Y$ be $(\kappa,\kappa')$-continuous functions.
Suppose there is a positive integer $m$ and a function
$h: X \times [0,m]_{\Z} \rightarrow Y$
such that

\begin{itemize}
\item for all $x \in X$, $h(x,0) = f(x)$ and $h(x,m) = g(x)$;
\item for all $x \in X$, the induced function
      $h_x: [0,m]_{\Z} \rightarrow Y$ defined by
          \[ h_x(t) ~=~ h(x,t) \mbox{ for all } t \in [0,m]_{\Z} \]
          is $(c_1,\kappa')-$continuous. That is, $h_x(t)$ is a path in $Y$.
\item for all $t \in [0,m]_{\Z}$, the induced function
         $h_t: X \rightarrow Y$ defined by
          \[ h_t(x) ~=~ h(x,t) \mbox{ for all } x \in  X \]
          is $(\kappa,\kappa')-$continuous.
\end{itemize}
Then $h$ is a {\em digital $(\kappa,\kappa')-$homotopy between} $f$ and
$g$, and $f$ and $g$ are {\em digitally $(\kappa,\kappa')-$homotopic in} $Y$, denoted
$f \simeq_{\kappa,\kappa'} g$ or $f \simeq g$ when
$\kappa$ and $\kappa'$ are understood.
If $(X,\kappa)=(Y,\kappa')$, we say $f$ and $g$ are
{\em $\kappa$-homotopic} to abbreviate
``$(\kappa,\kappa)$-homotopic" and write
$f \simeq_{\kappa} g$ to abbreviate
``$f \simeq_{\kappa,\kappa} g$".


If there is a $\kappa$-homotopy between
$\id_X$ and a constant map, we say $X$ is {\em $\kappa$-contractible}, or just {\em contractible}
when $\kappa$ is understood.
\end{definition}

\begin{definition}
Let $A \subseteq X$. A $\kappa$-continuous
function $r: X \to A$ is a {\em retraction}, and
{\em $A$ is a retract of $X$}, if $r(a)=a$ for
all $a \in A$. If such a map $r$ satisfies
$i \circ r \simeq_{\kappa} \id_X$ where $i: A \to X$ is the
inclusion map, then $r$ is a
{\em $\kappa$-deformation retraction} and
$A$ is a {\em $\kappa$-deformation retract} of $X$.
\end{definition}

A topological space $X$ has the {\em fixed point property (FPP)} if every continuous $f: X \to X$ has
a fixed point. A similar 
definition has appeared in
digital topology: a digital image $(X,\kappa)$ has 
the {\em fixed point property (FPP)} if every 
$\kappa$-continuous $f: X \to X$ has
a fixed point. However, this
property turns out to be trivial, in the sense of
the following.

\begin{thm}
{\rm \cite{BEKLL}}
\label{BEKLLFPP}
A digital image $(X,\kappa)$ has the FPP if and
only if $\#X=1$.
\end{thm}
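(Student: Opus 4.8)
The plan is to prove the two implications separately, treating the forward implication as an immediate observation and devoting the real work to the reverse one, which carries all the content. For the direction $\#X = 1 \Rightarrow \text{FPP}$, note that if $X = \{x_0\}$ then the only self-map is $\id_X$, which fixes $x_0$; so $X$ trivially has the FPP. The substance is the converse: assuming $\#X \ge 2$, I would exhibit a single explicit $\kappa$-continuous $f\colon X \to X$ with $\Fix(f) = \emptyset$, thereby showing FPP fails. The guiding idea is that digital continuity, unlike the classical setting, permits collapsing and ``folding,'' so no Brouwer-type phenomenon can force a fixed point.

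The construction I propose is to collapse nearly all of $X$ onto one point and reflect that point across an incident edge. Concretely, choose distinct $a, b \in X$, selecting them so that $a \adj_{\kappa} b$ whenever $X$ has at least one pair of $\kappa$-adjacent points, and define
\[ f(a) = b, \qquad f(x) = a \text{ for all } x \neq a. \]
The verification then splits into two routine checks. First, $f$ is fixed-point-free: $f(a) = b \neq a$, and for each $x \neq a$ we have $f(x) = a \neq x$. Second, $f$ is $\kappa$-continuous: by the characterization theorem it suffices to check each adjacent pair $x \adj_{\kappa} y$. If neither endpoint is $a$, then $f(x) = f(y) = a$, so $f(x) \adjeq_{\kappa} f(y)$ holds by equality; if one endpoint is $a$, say $x = a$, then $\{f(x), f(y)\} = \{b, a\}$ and $b \adjeq_{\kappa} a$ precisely because $a \adj_{\kappa} b$.

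The remaining point is to justify the choice of an adjacent pair. If $X$ possesses any edge I simply take its endpoints as $a, b$; in particular, if $X$ is connected with $\#X \ge 2$ an edge must exist, since a connected graph on at least two vertices has one. If instead $X$ has no edges at all, then every $\kappa$-connected subset is a singleton, so every self-map is automatically continuous and the same formula (with any distinct $a, b$) is a continuous, fixed-point-free map. Either way FPP fails.

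I expect the main obstacle to be nothing more than isolating the one place where continuity can break, namely the edges incident to the distinguished point $a$: there $f$ attains the two values $a$ and $b$, so I need $a \adjeq_{\kappa} b$, which is exactly why $a$ and $b$ are chosen adjacent. The conceptual crux is recognizing that this single ``fold across an edge'' already suffices in complete generality, so there is no genuinely difficult connected case to wrestle with; once that is seen, every step reduces to the direct verifications above.
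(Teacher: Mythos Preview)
Your proposal is correct and takes essentially the same approach as the paper: the paper derives the theorem from Lemma~\ref{howBEKLLFPP}, which is exactly your map $f(a)=b$, $f(x)=a$ for $x\neq a$ with $a \adj_{\kappa} b$. Your version is slightly more complete in that you also dispose of the degenerate case where $X$ has no $\kappa$-edges, which the paper's lemma leaves implicit.
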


The proof~\cite{BEKLL} of
Theorem~\ref{BEKLLFPP} was
due to the establishment
of the following.

\begin{lem}
\label{howBEKLLFPP}
Let $(X,\kappa)$ be a
digital image, where
$\#X > 1$. Let
$x_0,x_1 \in X$ be such that
$x_0 \adj_{\kappa} x_1$. Then the
function $f: X \to X$
given by $f(x_0)=x_1$ and
$f(x)=x_0$ for $x \neq x_0$
is $\kappa$-continuous and
has 0 fixed points.
\end{lem}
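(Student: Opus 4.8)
The plan is to verify the two assertions separately, treating the fixed-point count first (it is essentially immediate) and then establishing continuity by means of the adjacency characterization of continuity recorded above.

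For the fixed-point claim, I would begin by noting that adjacency is defined only between distinct points, so the hypothesis $x_0 \adj_\kappa x_1$ forces $x_0 \neq x_1$. Then $x_0$ is not fixed, since $f(x_0) = x_1 \neq x_0$; and any $x \neq x_0$ is not fixed, since $f(x) = x_0 \neq x$. Thus no point of $X$ is fixed, i.e. $\Fix f = \emptyset$.

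For continuity, I would invoke the theorem that $f$ is $\kappa$-continuous if and only if $x \adj_\kappa y$ implies $f(x) \adjeq_\kappa f(y)$. So I would fix an arbitrary adjacent pair $x \adj_\kappa y$ and split into cases according to whether $x_0 \in \{x,y\}$. If $x_0 \notin \{x,y\}$, then $f(x) = x_0 = f(y)$, so the two images are equal and hence $f(x) \adjeq_\kappa f(y)$. If instead $x = x_0$, then $y \neq x_0$ (an edge joins distinct vertices), so $f(x) = x_1$ while $f(y) = x_0$, and the required relation $x_1 \adjeq_\kappa x_0$ is exactly the hypothesis $x_0 \adj_\kappa x_1$ read through the symmetry of $\kappa$. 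The remaining case $y = x_0$ is identical by symmetry.

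The only delicate point is this last case, where exactly one endpoint of an edge is the distinguished point $x_0$: it is precisely here that the choice $f(x_0) = x_1$ with $x_0 \adj_\kappa x_1$ is what keeps the images adjacent and so prevents a violation of continuity. Away from $x_0$ the map is constant, equal to $x_0$, so no adjacency condition can fail there, and the verification is complete.
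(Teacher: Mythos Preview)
Your proof is correct and complete. Note that the paper itself does not supply a proof of this lemma; it is stated there as a result from the cited reference \cite{BEKLL}, so there is no in-paper argument to compare against. Your verification via the adjacency characterization of continuity, splitting on whether $x_0$ lies in the adjacent pair, is exactly the natural elementary argument one would expect.
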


A function $f: (X,\kappa) \to (Y,\lambda)$ is
an {\em isomorphism} (called a {\em homeomorphism}
in~\cite{Bx94}) if $f$ is a continuous bijection
such that $f^{-1}$ is continuous.

\section{Rigidity}
We will say a function $f:X\to Y$ is \emph{rigid} when no continuous map is homotopic to $f$ except $f$ itself. This generalizes a definition in \cite{hmps}. When the 
identity map $\id:X\to X$ is rigid, we say $X$ is rigid. 

Many digital images are rigid, though it can be difficult to show directly that a given example is rigid. A computer search described in \cite{stae15} has shown that no rigid images in $\Z^2$ with 4-adjacency exist having fewer than 13 points, and no rigid images in $\Z^2$ with 8-adjacency exist having fewer than 10 points. We will demonstrate some methods for showing that a given image is rigid. 
For example, the digital image in Figure \ref{rigidwedge} is rigid, 
as shown below in Example \ref{wedgeOfCycles}. 

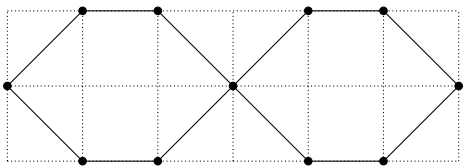
\begin{figure}
\[
\begin{tikzpicture}
\draw[densely dotted] (-3,-1) grid (3,1);
\foreach \x in {-2,-1,2,1} {
 \node at (\x,-1) [vertex] {};
 \node at (\x,1) [vertex] {};
 }
\foreach \x in {-3,0,3}
 \node at (\x,0) [vertex] {};
\draw (-3,0)--(-2,1)--(-1,1)--(0,0)--(1,1)--(2,1)--(3,0);
\draw (-3,0)--(-2,-1)--(-1,-1)--(0,0)--(1,-1)--(2,-1)--(3,0);
\end{tikzpicture}
\]
\caption{A rigid image in $\Z^2$ with 8-adjacency\label{rigidwedge}}
\end{figure}

An immediate consequence of
the definition of rigidity is the following.

\begin{prop}
Let $(X,\kappa)$ be a rigid digital image
such that $\#X > 1$. Then $X$ is not
$\kappa$-contractible.
\end{prop}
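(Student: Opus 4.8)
The plan is to argue by contradiction, simply unwinding the two definitions involved. Suppose, contrary to the claim, that $X$ is $\kappa$-contractible. By the definition of contractibility there would then be a $\kappa$-homotopy between $\id_X$ and some constant map $c \colon X \to X$; that is, $\id_X \simeq_{\kappa} c$.

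Next I would invoke the rigidity hypothesis. Since $X$ is rigid, by definition $\id_X$ is a rigid map, which means that the only continuous self-map homotopic to $\id_X$ is $\id_X$ itself. Combined with $\id_X \simeq_{\kappa} c$, this forces $c = \id_X$.

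Finally I would extract the contradiction from the assumption $\#X > 1$. A constant map sends every point of $X$ to a single value, whereas, because $X$ contains two distinct points $x \neq y$, we have $\id_X(x) = x \neq y = \id_X(y)$, so $\id_X$ assumes at least two distinct values and cannot be constant. Hence $c = \id_X$ is impossible, and the assumption that $X$ is contractible is untenable.

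As the author already signals by calling this ``an immediate consequence,'' there is essentially no obstacle here: the result follows by composing the definitions of \emph{rigid} and \emph{contractible}, and the only substantive use of the hypothesis is the elementary observation that the identity map on a set with more than one point is non-constant. The single point worth stating with care is that the rigidity of $X$ is, by definition, the rigidity of $\id_X$, so that the ``unique up to homotopy'' property is available precisely for the identity map to which the contracting homotopy connects a constant map.
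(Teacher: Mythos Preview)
Your proof is correct and matches the paper's treatment: the paper offers no proof at all, merely flagging the proposition as an immediate consequence of the definition of rigidity, which is exactly the unwinding you have written out.
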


Rigidity of functions is preserved when composing with an isomorphism, as the following theorems demonstrate:

\begin{thm}
\label{isoOfRigidIsRigid}
Let $f:X\to Y$ be rigid and $g:Y \to Z$ be an isomorphism. Then $g \circ f:X \to Z$ is rigid.
\end{thm}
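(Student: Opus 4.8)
The plan is to exploit the fact that $g$, being an isomorphism, has a continuous inverse $g^{-1}$, which lets me transport any homotopy of $g \circ f$ back to a homotopy of $f$. Concretely, suppose $h : X \to Z$ is a continuous map with $h \simeq g \circ f$; I must show $h = g \circ f$. Writing $\kappa, \lambda, \mu$ for the adjacencies on $X, Y, Z$ respectively, the map $g^{-1}$ is $(\mu,\lambda)$-continuous since $g$ is an isomorphism, so $g^{-1} \circ h : X \to Y$ is continuous by Theorem \ref{composition}.

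First I would record the key technical fact that post-composition with a continuous map preserves homotopy: if $H : X \times [0,m]_{\Z} \to Z$ is a homotopy from $g \circ f$ to $h$, then $g^{-1} \circ H : X \times [0,m]_{\Z} \to Y$ is a homotopy from $g^{-1} \circ (g \circ f) = f$ to $g^{-1} \circ h$. I verify the three conditions of Definition \ref{htpy-2nd-def} directly. The endpoint conditions hold because $(g^{-1} \circ H)(x,0) = g^{-1}(g(f(x))) = f(x)$ and $(g^{-1}\circ H)(x,m) = g^{-1}(h(x))$. For each fixed $x$, the induced slice $t \mapsto g^{-1}(H(x,t))$ is the composite of the path $H_x$ with the continuous map $g^{-1}$, hence a path by Theorem \ref{composition}; and for each fixed $t$, the induced map $x \mapsto g^{-1}(H(x,t))$ is the composite of the continuous map $H_t$ with $g^{-1}$, hence continuous by the same theorem.

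With this in hand the conclusion is immediate. From the homotopy $g^{-1} \circ H$ I obtain $g^{-1} \circ h \simeq f$, and since $f$ is rigid this forces $g^{-1} \circ h = f$. Post-composing both sides with $g$ gives $h = g \circ (g^{-1} \circ h) = g \circ f$, as desired. Thus every continuous map homotopic to $g \circ f$ equals $g \circ f$, i.e., $g \circ f$ is rigid.

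I do not expect any genuine obstacle here: the entire argument rests on the single observation that homotopies push forward along continuous maps, which is an easy consequence of the closure of continuity under composition (Theorem \ref{composition}). The only point requiring a little care is the bookkeeping of the three distinct adjacencies $\kappa, \lambda, \mu$, so that each invocation of Theorem \ref{composition} is applied to the correct pair of adjacencies.
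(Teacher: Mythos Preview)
Your proof is correct and follows essentially the same approach as the paper: transport the homotopy through $g^{-1}$ to obtain a homotopy of $f$, then invoke rigidity of $f$. The paper phrases it as a proof by contradiction and is less explicit about verifying the homotopy conditions, but the core idea is identical.
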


\begin{proof}
Suppose otherwise. Then there is a homotopy
$h: X \times [0,m]_{\Z} \to Z$ from
$g \circ f$ to a map $G: X \to Z$ such that
$g \circ f \neq G$. Then by
Theorem~\ref{composition},
$g^{-1} \circ h: X \times [0,m]_{\Z} \to X$ is
a homotopy from $f$ to $g^{-1} \circ G$, and
since $g^{-1}$ is one-to-one,
$f \neq g^{-1} \circ G$. This
contradiction of the assumption that $f$ is rigid
completes the proof.
\end{proof}

\begin{thm}
\label{RigidOfIsoIsRigid}
Let $f:X\to Y$ be rigid. Let $g: W \to X$ be an
isomorphism. Then $f \circ g$ is rigid.
\end{thm}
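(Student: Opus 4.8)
The plan is to mirror the argument of Theorem~\ref{isoOfRigidIsRigid}, but precomposing with $g^{-1}$ rather than postcomposing. First I would argue by contradiction: suppose $f \circ g$ is not rigid, so there is a homotopy $h: W \times [0,m]_{\Z} \to Y$ from $f \circ g$ to some map $G: W \to Y$ with $f \circ g \neq G$. The goal is to transport this homotopy across the isomorphism $g$ so as to obtain a homotopy that starts at $f$ and ends at a map different from $f$, contradicting the rigidity of $f$.

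Concretely, I would define $h': X \times [0,m]_{\Z} \to Y$ by $h'(x,t) = h(g^{-1}(x),t)$, which makes sense because $g^{-1}: X \to W$ is continuous since $g$ is an isomorphism. Then $h'(x,0) = h(g^{-1}(x),0) = f(g(g^{-1}(x))) = f(x)$, and $h'(x,m) = G(g^{-1}(x)) = (G \circ g^{-1})(x)$, so $h'$ interpolates from $f$ to $G \circ g^{-1}$. The key verification is that $h'$ satisfies the three conditions of Definition~\ref{htpy-2nd-def}: each time-slice $h'_t = h_t \circ g^{-1}$ is continuous by Theorem~\ref{composition}, and each point-path $h'_x = h_{g^{-1}(x)}$ is a path in $Y$ because $h_{g^{-1}(x)}$ already is.

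Finally, I would check that $f \neq G \circ g^{-1}$. Since $f \circ g \neq G$, there is some $w \in W$ with $f(g(w)) \neq G(w)$; setting $x = g(w)$, so that $g^{-1}(x) = w$ (using that $g$ is a bijection), gives $f(x) \neq (G \circ g^{-1})(x)$. Hence $h'$ is a homotopy from $f$ to a distinct map, contradicting the rigidity of $f$ and completing the proof.

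I expect the only delicate point to be confirming that precomposition with $g^{-1}$ genuinely preserves the homotopy structure, i.e. that reindexing the $W$-coordinate keeps each slice continuous and each induced path a path. Because $g^{-1}$ acts only on the $X$-coordinate and leaves the $[0,m]_{\Z}$ coordinate fixed, the path condition transfers verbatim and the slice condition follows from composition of continuous maps, so this obstacle dissolves just as in the previous theorem. The one asymmetry with Theorem~\ref{isoOfRigidIsRigid} is that here surjectivity of $g$ (to locate $x = g(w)$) plays the role that injectivity of $g^{-1}$ played there.
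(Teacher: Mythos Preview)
Your proposal is correct and follows essentially the same approach as the paper's proof: both argue by contradiction, transport the assumed homotopy $h$ across the isomorphism via $h'(x,t) = h(g^{-1}(x),t)$, and verify that $f$ and $G \circ g^{-1}$ differ at $g(w)$. Your write-up is slightly more explicit in verifying the homotopy conditions of Definition~\ref{htpy-2nd-def}, but the argument is the same.
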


\begin{proof}
Suppose otherwise. Then there is a homotopy
$h: W \times [0,m]_{\Z} \to Y$ from $f \circ g$
to some $G: W \to Y$ such that $G \neq f \circ g$.
Thus, for some $w \in W$, $G(w) \neq f \circ g(w)$.
Now consider the function
$h': X \times [0,m]_{\Z} \to Y$ defined by
$h'(x,t) = h(g^{-1}(x),t)$. By
Theorem~\ref{composition}, $h'$ is a homotopy
from $f  \circ g \circ g^{-1} = f$ to
$G \circ g^{-1}$. Since
\[ f  \circ g (w) \neq G(w) = (G \circ g^{-1})(g(w)),
\]
the homotopic functions $f$ and $G \circ g^{-1}$
differ at $g(w)$, contrary to the assumption
that $f$ is rigid. The assertion follows.
\end{proof}

As an immediate corollary, we obtain:
\begin{cor}\label{rigidiso}
If $f:X\to Y$ is an isomorphism and one of $X$ and $Y$ are rigid, then $f$ is rigid.
\end{cor}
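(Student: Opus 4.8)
The plan is to reduce to the two preceding theorems by writing $f$ as a composition with an identity map, using the fact that rigidity of an image means exactly rigidity of its identity map. First I would split into two cases according to which of $X$ and $Y$ is assumed rigid.

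Suppose $X$ is rigid, so by definition $\id_X : X \to X$ is a rigid map. Since $f : X \to Y$ is an isomorphism, I would apply Theorem~\ref{isoOfRigidIsRigid} with the rigid map taken to be $\id_X$ and the isomorphism taken to be $f$. Its conclusion is that $f \circ \id_X$ is rigid, and since $f \circ \id_X = f$, this is exactly the desired statement.

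Suppose instead $Y$ is rigid, so that $\id_Y : Y \to Y$ is rigid. Here I would apply Theorem~\ref{RigidOfIsoIsRigid} with the rigid map taken to be $\id_Y$ and with $f : X \to Y$ playing the role of the isomorphism $W \to X$ in that statement. Its conclusion is that $\id_Y \circ f$ is rigid, and $\id_Y \circ f = f$, completing this case.

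Because no genuine computation is involved, there is no real obstacle here; the only point requiring care is matching the hypotheses of each theorem correctly. In particular one must recognize that the phrase ``$X$ is rigid'' is precisely the assertion that $\id_X$ is rigid, and then assign the correct map to the role of the ``rigid map'' versus the ``isomorphism'' in each theorem. The two cases use the two theorems symmetrically: Theorem~\ref{isoOfRigidIsRigid} handles post-composition by the isomorphism (the case where the domain is rigid), while Theorem~\ref{RigidOfIsoIsRigid} handles pre-composition by it (the case where the codomain is rigid).
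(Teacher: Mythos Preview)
Your proof is correct and follows exactly the paper's approach: write $f = f\circ \id_X$ or $f = \id_Y\circ f$ and invoke the appropriate composition-with-isomorphism theorem. In fact your assignment of Theorems~\ref{isoOfRigidIsRigid} and~\ref{RigidOfIsoIsRigid} to the two cases is the correct one; the paper's own proof has these two references swapped.
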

\begin{proof}
In the case where $X$ is rigid, the identity map $\id_X$ is rigid. Then by Theorem \ref{RigidOfIsoIsRigid} we have $f \circ \id_X= f$ is rigid. In the case where $Y$ is rigid, similarly by Theorem \ref{isoOfRigidIsRigid} we have $\id_Y \circ f=f$ is rigid.
\end{proof}

The  corollary above can be stated equivalently as follows:
\begin{cor}
A digital image $X$ is rigid if and only if every
digital image $Y$ that is isomorphic to $X$ is
rigid.
\end{cor}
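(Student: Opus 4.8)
The plan is to prove the biconditional by treating the two directions separately, with essentially all of the work in the forward direction. The backward direction is immediate: since $X$ is isomorphic to itself via $\id_X$, the hypothesis that every digital image isomorphic to $X$ is rigid already covers $X$ itself, so $X$ is rigid.

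For the forward direction, I would start from the assumption that $X$ is rigid, which by definition means $\id_X$ is rigid. Let $Y$ be any digital image isomorphic to $X$, and fix an isomorphism $f: X \to Y$; then $f^{-1}: Y \to X$ is also an isomorphism. The goal is to show that $Y$ is rigid, i.e., that $\id_Y$ is rigid, and the key observation is the factorization $\id_Y = f \circ \id_X \circ f^{-1}$.

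I would then apply the two preservation theorems in sequence. First, by Theorem~\ref{RigidOfIsoIsRigid}, composing the rigid map $\id_X$ on the right with the isomorphism $f^{-1}$ shows that $\id_X \circ f^{-1} = f^{-1}$ is rigid. Next, by Theorem~\ref{isoOfRigidIsRigid}, composing the rigid map $f^{-1}$ on the left with the isomorphism $f$ shows that $f \circ f^{-1} = \id_Y$ is rigid. Hence $Y$ is rigid, completing the forward direction. (This is of course closely tied to Corollary~\ref{rigidiso}, which is what the surrounding text means by ``equivalent.'')

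The only subtlety, and the step I expect to be the main obstacle, is recognizing that rigidity of $X$ must be converted into rigidity of $\id_Y$ rather than merely into rigidity of the isomorphism $f$ itself (the latter being precisely the content of Corollary~\ref{rigidiso}). The factorization through $f^{-1}$, together with the two-sided nature of the preservation theorems, is exactly what bridges this gap; once the correct composition is written down, each step is a direct invocation of an already-proved theorem.
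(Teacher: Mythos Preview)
Your proof is correct and aligns with the paper's approach. The paper does not give an independent proof of this corollary; it merely asserts that it is an equivalent restatement of Corollary~\ref{rigidiso}, whose proof already rests on Theorems~\ref{isoOfRigidIsRigid} and~\ref{RigidOfIsoIsRigid}. Your argument unpacks exactly this: you first obtain rigidity of $f^{-1}$ (which is the content of Corollary~\ref{rigidiso} applied to $f^{-1}$) and then apply the other preservation theorem once more to reach $\id_Y$, making explicit the step the paper leaves to the reader.
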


It is easy to see that no digital image in $\Z$ is rigid:
\begin{prop}
If $X \subset \Z$ is a connected digital image with $c_1$ adjacency and $\#X>1$, then $X$ is not rigid.
\end{prop}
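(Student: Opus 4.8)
The plan is to reduce the statement to the standard digital interval and then exhibit an explicit homotopy showing that this interval is contractible; contractibility will immediately yield non-rigidity. First I would observe that a connected $X \subset \Z$ under $c_1$-adjacency is exactly a block of consecutive integers $\{a, a+1, \ldots, b\}$ with $b > a$ (using $\#X > 1$), since $c_1$-adjacency in $\Z$ is $2$-adjacency and connectivity forbids gaps. Translation by $-a$ is an isomorphism of $X$ onto $[0,m]_{\Z}$ with $m = b - a \geq 1$, so by Corollary~\ref{rigidiso} (rigidity is invariant under isomorphism) it suffices to show that $[0,m]_{\Z}$ is not rigid.

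Next I would write down a homotopy $h: [0,m]_{\Z} \times [0,m]_{\Z} \to [0,m]_{\Z}$ from $\id$ to a constant map, namely $h(x,t) = \max(x - t, 0)$. One checks that the image lies in $[0,m]_{\Z}$, that $h(x,0) = x$ for all $x$, and that $h(x,m) = 0$ for all $x$, so $h$ connects $\id$ to the constant map at $0$. The continuity conditions of Definition~\ref{htpy-2nd-def} are then routine: for fixed $x$, increasing $t$ by one changes $\max(x-t,0)$ by $0$ or $-1$, so each $h_x$ is a path; and for fixed $t$, increasing $x$ by one changes the value by $0$ or $+1$, so each $h_t$ is $c_1$-continuous. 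Hence $[0,m]_{\Z}$ is $c_1$-contractible.

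Finally, since $\#X > 1$ forces $m \geq 1$, the identity map differs from the constant map at $0$, so $\id$ is homotopic to a map other than itself and is therefore not rigid; equivalently, one may invoke the contrapositive of the earlier proposition that no contractible image with more than one point can be rigid. I do not expect any substantial obstacle here: the only point needing care is confirming that the $\max$ formula simultaneously satisfies both one-variable continuity conditions and never leaves $[0,m]_{\Z}$, and the formula is designed precisely so that each slice moves by at most one step. An alternative to the explicit contraction would be to build the homotopy directly on $\{a,\ldots,b\}$ via $h(x,t) = \max(x-t,a)$, avoiding even the translation step, but the reduction to $[0,m]_{\Z}$ keeps the bookkeeping cleanest.
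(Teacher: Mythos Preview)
Your argument is correct for finite $X$, and in that case it follows the same idea as the paper's proof: exhibit a deformation of $\id_X$ to a non-identity map. The paper simply asserts the existence of a deformation retraction to a proper subset, while you give an explicit contraction $h(x,t)=\max(x-t,0)$; that extra detail is fine.

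There is one genuine gap, however. Your opening claim that a connected $X\subset\Z$ with $c_1$-adjacency ``is exactly a block of consecutive integers $\{a,\ldots,b\}$'' tacitly assumes $X$ is finite. The paper explicitly allows infinite digital images, and its proof lists four possible forms for connected $X\subset\Z$: the finite interval $[a,b]_{\Z}$, the two half-lines $\{z\ge a\}$ and $\{z\le b\}$, and all of $\Z$. Your reduction to $[0,m]_{\Z}$ and the contraction to a point do not apply to the infinite cases (an infinite image cannot be contracted in finitely many steps). The repair is easy: you do not need full contractibility, only a single homotopy step away from the identity. For any of the four forms one can take the one-step homotopy $h(x,0)=x$, $h(x,1)=\max(x-1,a)$ (or $h(x,1)=x-1$ when $X=\Z$), which is $c_1$-continuous in each variable and moves at least one point. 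This is exactly the ``deformation retraction to a proper subset'' the paper invokes.
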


\begin{proof}
A connected subset of $\Z$ having more than
one point takes one of the forms
\[ [a,b]_{\Z},~\{z \in \Z \, | \, z \ge a\},~\{z \in \Z \, | \, z \le b\},~\Z.
\]
In all of these cases, it is easily seen
that there is a deformation retraction
of $X$ to a proper subset of $X$. 
Therefore, $X$ is not rigid.
\end{proof}

%

We also show that a normal product of images is rigid if and only if all of its factors are rigid.
\begin{thm}
\label{rigidProd}
Let $(X_i,\kappa_i)$ be digital images for each $1 \le i \le v$, and $(\prod_{i=1}^v X_i, \kappa)$ be the product image, where $\kappa = NP_u(\kappa_1,\ldots, \kappa_v)$ for some $1 \le u \le v$. Then $X$ is rigid if and only if $X_i$ is rigid for each $i$.
\end{thm}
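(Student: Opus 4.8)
The plan is to reduce everything to ``one-step'' homotopies. I would first establish the following characterization: a continuous map $f:X\to Y$ is rigid if and only if there is no continuous $g:X\to Y$ with $g\neq f$ and $f(x)\adjeq g(x)$ for every $x$. The backward implication is immediate, since such a $g$ is joined to $f$ by the one-step homotopy determined by $h(x,0)=f(x)$, $h(x,1)=g(x)$. For the forward direction, given any homotopy $h:X\times[0,m]_{\Z}\to Y$ from $f$ to some $G\neq f$, I would look at the slices $h_t=h(\cdot,t)$; consecutive slices are continuous and satisfy $h_t(x)\adjeq h_{t+1}(x)$, so taking the least $s$ with $h_s\neq f$ yields a one-step homotopy from $f=h_{s-1}$ to $h_s\neq f$. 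Applied to $f=\id_X$, this says: $X$ fails to be rigid exactly when there is a continuous $G\neq\id_X$ with $p\adjeq_{\kappa} G(p)$ for all $p$.

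Alongside this I would record two routine facts about the $NP_u$ adjacency: that the coordinate projections $\pi_i$ and the insertion maps $\iota_{i,b}\colon X_i\to X$ that place $a$ in slot $i$ while fixing the remaining coordinates equal to those of a base point $b$ are continuous, and that a coordinatewise product $f_1\times\cdots\times f_v$ of continuous maps is $NP_u$-continuous. Each follows directly from the definition of $NP_u$, since adjacency there is controlled coordinate by coordinate with at most $u$ coordinates changing.

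For the forward direction ($X$ rigid $\Rightarrow$ each $X_i$ rigid) I would argue the contrapositive. If some $X_j$ is not rigid, pick a continuous $G_j\neq\id_{X_j}$ with $a\adjeq_{\kappa_j}G_j(a)$ for all $a$, and set $G=\id_{X_1}\times\cdots\times G_j\times\cdots\times\id_{X_v}$. By the product fact $G$ is continuous; it differs from the identity only in coordinate $j$, so $p\adjeq_{\kappa}G(p)$ for all $p$; and $G\neq\id_X$. Hence $X$ is not rigid.

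The converse (each $X_i$ rigid $\Rightarrow$ $X$ rigid) is where the real work lies, and I would again prove the contrapositive. Suppose $X$ is not rigid, so there is a continuous $G\neq\id_X$ with $p\adjeq_{\kappa}G(p)$ for every $p$. Writing $G^i=\pi_i\circ G$, the adjacency condition forces $G^i(p)\adjeq_{\kappa_i}p_i$ in each coordinate. For each index $i$ and base point $b$, define $g_{i,b}=\pi_i\circ G\circ\iota_{i,b}\colon X_i\to X_i$; this is continuous by the facts above and satisfies $a\adjeq_{\kappa_i}g_{i,b}(a)$, so it is a candidate witness that $X_i$ is not rigid. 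The main obstacle is that $G$ need not respect the product structure---each output coordinate may depend on all input coordinates---so no single slice is guaranteed to be nontrivial. I would resolve this with a global argument: if every $g_{i,b}$ were the identity, then evaluating at $b=p$ and $a=p_i$ gives $\pi_i(G(p))=p_i$ for all $i$ and all $p$, forcing $G=\id_X$, a contradiction. Hence some $g_{i,b}\neq\id_{X_i}$, and by the one-step characterization $X_i$ is not rigid, completing the proof.
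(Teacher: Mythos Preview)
Your proof is correct and follows essentially the same approach as the paper's: both directions use the insertion and projection maps to transfer a witness of non-rigidity between a factor and the product, and both reduce to one-step homotopies (the paper handles this by ``without loss of generality $m=1$,'' while you make the one-step characterization explicit). Your observation that $\iota_{i,p}(p_i)=p$ is exactly the paper's choice of $J_j$ based at the point $y$ where $g(y)\neq y$, so the converse arguments coincide.
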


\begin{proof}
First we assume $X$ is rigid, and we will show that $X_i$ is rigid for each $i$. For some $i$, let
$h_i: X_i \times [0,m]_{\Z} \to X_i$ be a 
$\kappa_i$-homotopy
from $\id_{X_i}$ to $f_i: X_i \to X_i$. Without loss
of generality we may assume $m = 1$, and we will show that $h_i(x_i,1)=x_i$, and thus $f_i = \id_{X_i}$. 
The function
$h: X \times [0,1]_{\Z} \to X$ defined by
\[ h(x_1, \ldots, x_v, t) = (x_1, \ldots, x_{i-1},h_i(x_i,t), x_{i+1}, \dots, x_v),
\]
is a homotopy. Since $X$ is rigid we must have $h(x_1, \ldots, x_v, 1) = \id_X$, and this means $h_i(x_i,1)=x_i$ as desired.

Now we prove the converse: assume $X_i$ is rigid, and we will show $X$ is rigid.
Let $J_i: X_i \to X$ be the function
\[ J_i(x) = (x_1, \ldots, x_{i-1}, x, x_{i+1}, \ldots, x_n).
\]
Let $p_i: X \to X_i$ be the projection function,
\[p_i(y_1,\ldots, y_n)=y_i.
\]
Then $J_i$ is $(\kappa_i, \kappa)$-continuous, and $p_i$ is $(\kappa, \kappa_i)$-continuous~\cite{BxNormal,BxAlt}.

For the sake of a contradiction, suppose $X$ is not rigid. Then there is a homotopy
$h: (X,\kappa) \times [0,m]_{\Z} \to X$ between $\id_X$ and a function
$g$ such that for some $y = (y_1,\ldots,y_v) \in X$, $g(y) \neq y$.
Then for some index $j$, $p_j(y) \neq p_j(g(y))$. Then the function
$h': X_j \times [0,m]_{\Z} \to X_j$ defined
by
$h'(x,t)= p_j(h(J_j(x), t))$,
is a homotopy from $\id_{X_j}$ to a function
$g_j$, with 
\[ g_j(y_j) = p_j(h(J_j(y_j), m)) = p_j(h(y,m))
= p_j(g(y)) \neq p_j(y)= y_j,
\]
contrary to the assumption that $X_i$ is rigid.
We conclude that $X$ is rigid.
\end{proof}

We have a similar result when $X$ is a disjoint union of digital images. Let $X$ be a digital image of the form $X=A\cup B$ where $A$ and $B$ are disjoint and no point of $A$ is adjacent to any point of $B$. We say $X$ is the disjoint union of $A$ and $B$, and we write $X=A\sqcup B$. 
\begin{thm}
Let $X = A\sqcup B$. Then $X$ is rigid if and only if $A$ and $B$ are rigid.
\end{thm}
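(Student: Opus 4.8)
The plan is to prove both implications directly, the unifying principle being that the defining property of the disjoint union --- that no point of $A$ is $\kappa$-adjacent to any point of $B$ --- forces every homotopy of $X$ to respect the splitting $X = A \sqcup B$. Concretely, every $\kappa$-connected subset of $X$ lies entirely in $A$ or entirely in $B$, so the connected components of $X$ are partitioned between $A$ and $B$. All the work reduces to transporting homotopies across this partition.

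For the reverse direction I would assume $A$ and $B$ are rigid and take an arbitrary homotopy $h: X \times [0,m]_{\Z} \to X$ from $\id_X$ to some $g$; the goal is to show $g = \id_X$. The key step is that for each $x \in A$ the induced path $h_x$ stays in $A$. This follows by induction on $t$: we have $h_x(0) = x \in A$, and if $h_x(t) \in A$ then $h_x(t+1) \adjeq h_x(t)$ together with the absence of $A$--$B$ adjacencies forces $h_x(t+1) \in A$ as well. Hence $h$ restricts to a map $h^A: A \times [0,m]_{\Z} \to A$, and one checks it satisfies the three homotopy axioms (the slices $(h^A)_t = h_t|_A$ are continuous as restrictions of continuous maps whose image lies in $A$, and the induced paths are exactly the paths $h_x$ for $x\in A$). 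Thus $h^A$ is a homotopy from $\id_A$ to $g|_A$, so rigidity of $A$ gives $g|_A = \id_A$; by the symmetric argument $g|_B = \id_B$, whence $g = \id_X$ and $X$ is rigid.

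For the forward direction I would argue by contraposition: suppose $A$ is not rigid (the case of $B$ is symmetric), so there is a homotopy $h^A: A \times [0,m]_{\Z} \to A$ from $\id_A$ to some $f_A \neq \id_A$. Define $h: X \times [0,m]_{\Z} \to X$ by $h(x,t) = h^A(x,t)$ for $x \in A$ and $h(x,t) = x$ for $x \in B$. The path axiom is immediate, since $h_x$ is a path in $A$ when $x \in A$ and is constant when $x \in B$, and clearly $h_0 = \id_X$ while $h_m$ differs from $\id_X$ on $A$. The content is verifying that each slice $h_t$ is $\kappa$-continuous: if $x \adj y$ then $x$ and $y$ lie in a common component, hence both in $A$ or both in $B$; in the first case continuity of $(h^A)_t$ gives $h_t(x) \adjeq h_t(y)$, and in the second $h_t(x) = x \adjeq y = h_t(y)$. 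This produces a homotopy from $\id_X$ to a map unequal to $\id_X$, contradicting rigidity of $X$.

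The main obstacle --- indeed essentially the only nonroutine point --- is the pair of separation arguments that allow homotopies to pass between $X$ and its summands: the path-confinement claim in the reverse direction and the slicewise continuity check of the glued homotopy in the forward direction. Both rest on the same fact, that a $\kappa$-adjacency (or equality) cannot cross between $A$ and $B$, so once this observation is isolated the remaining verifications of the homotopy axioms are mechanical.
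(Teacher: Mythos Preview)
Your proof is correct and follows essentially the same approach as the paper: in one direction you restrict a homotopy of $X$ to the summands using the path-confinement argument (the paper phrases this as ``there will always be a path from $x$ to $f(x)$''), and in the other you extend a homotopy on a summand by the identity on the complementary piece. The only difference is presentational---you argue the forward direction by contraposition and spell out the homotopy axioms more carefully than the paper does.
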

\begin{proof}
First we assume that $X$ is rigid, and we will show that $A$ is rigid. (It will follow from a similar argument that $B$ is rigid.) Let $f:A\to A$ be any self-map homotopic to $\id_A$, and we will show that $f=\id_A$. Define $g:X\to X$ by
\[ g(x) = \begin{cases}
f(x) \quad &\text{ if $x\in A$;} \\
x \quad &\text{ if $x \in B$.} \end{cases} \]
Then $g$ is continuous and homotopic to $\id_X$, and since $X$ is rigid we must have $g=\id_X$, which means that $f=\id_A$.

Now for the converse, assume that $A$ and $B$ are both rigid. Take some self-map $f:X\to X$ homotopic to $\id_X$, and we will show that $f=\id_X$. Since $f$ is homotopic to the identity, we must have $f(A)\subseteq A$ and $f(B)\subseteq B$. This is because there will always be a path from any point $x$ to $f(x)$ given by the homotopy from $\id_X$ to $f(x)$. 
Thus if $x\in A$ we must also have $f(x)\in A$ since there are no paths from points of $A$ to points of $B$.

Since $f(A)\subseteq A$ and $f(B)\subseteq B$, there are well-defined restrictions 
$f_A:A\to A$ and $f_B:B\to B$, and the homotopy from $\id_X$ to $f$ induces homotopies from $\id_A$ to $f_A$ and $\id_B$ to $f_B$. Since $A$ and $B$ are rigid we must have $f_A=\id_A$ and $f_B=\id_B$, and thus $f=\id_X$ as desired.
\end{proof}

Since every digital image is a disjoint union of its connected components, we have:
\begin{cor}\label{componentsrigid}
A digital image $X$ is rigid if and only if every connected component of $X$ is rigid.
\end{cor}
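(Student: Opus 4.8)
The plan is to bootstrap the binary disjoint-union theorem proved just above to an arbitrary number of components, arguing by induction on the number of connected components of $X$. The sentence preceding the corollary already supplies the essential structural fact: every digital image decomposes as a disjoint union of its connected components, where ``disjoint union'' is used in the technical sense that no point of one component is adjacent to a point of another. So the whole task is to propagate the two-factor equivalence across this decomposition.

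First I would handle the base case. If $X$ is connected, then $X$ is its own unique connected component, and the asserted biconditional reads ``$X$ is rigid iff $X$ is rigid,'' which is trivially true. For the inductive step, suppose $X$ has connected components $C_1,\dots,C_k$ with $k \ge 2$, and assume the result for images with fewer than $k$ components. Write $X = C_1 \sqcup B$, where $B = C_2 \cup \cdots \cup C_k$. Because the $C_i$ are distinct connected components of $X$, no point of $C_1$ is adjacent to any point of $B$, so this is a genuine disjoint union $C_1 \sqcup B$ in the sense required by the preceding theorem. That theorem then gives that $X$ is rigid if and only if both $C_1$ and $B$ are rigid. Since $B$ has exactly $k-1$ connected components (namely $C_2,\dots,C_k$), the inductive hypothesis gives that $B$ is rigid if and only if each of $C_2,\dots,C_k$ is rigid. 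Chaining the two equivalences yields that $X$ is rigid if and only if every $C_i$ is rigid, completing the induction.

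The step I expect to require the most care is the case in which $X$ has infinitely many connected components, since the induction above only reaches finitely many factors. Here I would not try to iterate the binary theorem but instead rerun its argument directly in the many-component setting. In one direction, if every component is rigid and $h$ is a homotopy from $\id_X$ to some $f$, then for each $x$ the path $t \mapsto h(x,t)$ stays within the component of $x$ (paths cannot cross between components), so $f$ preserves each component setwise and restricts there to a self-map homotopic to the identity; rigidity of each component forces $f=\id_X$. Conversely, if $X$ is rigid, then a homotopy on any single component, extended by the identity on the remaining points, is a homotopy on all of $X$, so rigidity of $X$ forces that component to be rigid. These two observations recover the biconditional uniformly, independent of the number of components, so the corollary holds in full generality. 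In the finite case, which is the setting the paper principally treats, the clean inductive reduction above already suffices.
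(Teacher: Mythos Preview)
Your proposal is correct and follows exactly the route the paper intends: the corollary is stated immediately after the binary disjoint-union theorem with no proof beyond the remark that every image is a disjoint union of its components, and your induction merely spells out that derivation. Your additional handling of the infinite-component case is a welcome refinement, since the paper's one-line justification tacitly leans on a finite iteration of the two-factor result.
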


Let $X$ be some digital image of the form $X=A \cup B$, where $A \cap B$ is a single point $x_0$, and no point of $A$ is adjacent to any point of $B$ except $x_0$. We say $X=A \cup B$ is the {\em wedge
of $A$ and $B$}, denoted $X =A\wedge B$, and $x_0$ is called the \emph{wedge point} of $A\wedge B$. We have the following.

\begin{thm}
\label{wedgeThm}
If $X=A\wedge B$ and $A$ and $B$ are rigid, then $X$ is rigid.
\end{thm}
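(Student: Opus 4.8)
The plan is to exploit the two natural collapse maps of a wedge onto its summands, use them to transport a homotopy $\id_X \simeq f$ down to homotopies on $A$ and on $B$, and then invoke the rigidity of $A$ and $B$ to force $f$ to be the identity. So suppose $f \simeq_\kappa \id_X$; I must show $f = \id_X$.

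First I would define $r_A : X \to A$ and $r_B : X \to B$ by
\[
r_A(x) = \begin{cases} x & \text{if } x \in A, \\ x_0 & \text{if } x \in B, \end{cases}
\qquad
r_B(x) = \begin{cases} x & \text{if } x \in B, \\ x_0 & \text{if } x \in A. \end{cases}
\]
These are well defined since $A \cap B = \{x_0\}$ and both clauses give $r_A(x_0) = x_0$. I would verify continuity of $r_A$ by cases on an adjacent pair $x \adj_\kappa y$: if both lie in $A$ the map fixes them; if both lie in $B$ their images are both $x_0$; the wedge hypothesis forbids adjacencies between $A \setminus \{x_0\}$ and $B \setminus \{x_0\}$, so the only mixed case involves $x_0$, where the images are either equal or are $x_0$ adjacent to the other point. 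Thus $r_A$, and symmetrically $r_B$, is $\kappa$-continuous, with $r_A|_A = \id_A$ and $r_B|_B = \id_B$.

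Given a homotopy $h : X \times [0,m]_{\Z} \to X$ from $\id_X$ to $f$, I would define $h_A : A \times [0,m]_{\Z} \to A$ by $h_A(a,t) = r_A(h(a,t))$ and check the three homotopy conditions: $h_A(a,0) = r_A(a) = a$; $h_A(a,m) = r_A(f(a))$; each slice $h_A(a,\cdot) = r_A \circ h_a$ is a path in $A$; and each level $h_A(\cdot,t) = r_A \circ (h_t|_A)$ is continuous, all by Theorem~\ref{composition}. Hence $f_A := r_A \circ f|_A$ satisfies $f_A \simeq_{\kappa} \id_A$, and rigidity of $A$ forces $f_A = \id_A$, i.e.\ $r_A(f(a)) = a$ for every $a \in A$. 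The symmetric construction gives $r_B(f(b)) = b$ for every $b \in B$.

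It remains to recover $f$ from these two identities, and this is where the only genuine subtlety lies: in a wedge a point of $A$ could \emph{a priori} be carried into $B$ along a path through $x_0$, so unlike the disjoint-union case I cannot simply assert $f(A) \subseteq A$. For $a \in A \setminus \{x_0\}$, however, $r_A^{-1}(a) = \{a\}$ because $r_A$ sends all of $B$ to $x_0 \neq a$; thus $r_A(f(a)) = a$ forces $f(a) = a$, and symmetrically $f(b) = b$ for $b \in B \setminus \{x_0\}$. For the wedge point, $r_A(f(x_0)) = x_0$ only yields $f(x_0) \in r_A^{-1}(x_0) = B$, while $r_B(f(x_0)) = x_0$ yields $f(x_0) \in A$; intersecting, $f(x_0) \in A \cap B = \{x_0\}$, so $f(x_0) = x_0$. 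Combining the three cases gives $f = \id_X$. The main obstacle, as noted, is pinning down the image of the wedge point, which the two-sided retraction argument resolves cleanly.
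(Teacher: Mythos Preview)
Your proof is correct and takes a cleaner route than the paper's. Both arguments begin with the same collapse maps $r_A,r_B$ (the paper calls this $p_A$) and both push the homotopy down to $A$ to invoke rigidity there. The paper then argues by contradiction: it reduces to a one-step homotopy, observes that a moved point $x'\in A$ would have to land in $B\setminus\{x_0\}$, deduces from the adjacency $x'\adj h_1(x')$ that $x'=x_0$, and finally finds a neighbor $x_1\in A$ of $x_0$ whose image violates $p_A\circ h_1=\id_A$. This last step requires the wedge-point component $A_0$ to have more than one point, so the paper first disposes of the case $\#A_0=1$ or $\#B_0=1$ via Corollary~\ref{componentsrigid}.

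Your approach instead uses \emph{both} retractions symmetrically and the observation that $r_A^{-1}(a)=\{a\}$ for $a\in A\setminus\{x_0\}$, which immediately fixes every non-wedge point; the wedge point is then pinned down by intersecting $r_A^{-1}(x_0)=B$ with $r_B^{-1}(x_0)=A$. This avoids the reduction to $m=1$, the local adjacency analysis, and the separate treatment of small components --- your argument works uniformly regardless of $\#A_0$ or $\#B_0$.
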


\begin{proof}
Let $x_0$ be the wedge point of $A\wedge B$, and let $A_0$ and $B_0$ be the components of $A$ and $B$ which include $x_0$. If $\#A_0=1$ or $\#B_0=1$, then the components of $A\wedge B$ are in direct correspondence to the components of $A$ and $B$ and the result follows by Corollary \ref{componentsrigid}. Thus we assume $\#A_0>1$ and $\#B_0>1$.

Let $h: A \wedge B \times [0,m]_{\Z} \to A \wedge B$
be a homotopy such that $h(x,0)=x$ for all
$x \in A \wedge B$. Without loss of generality,
$m=1$. If the induced map $h_1$ is not
$\id_{X}$ then there is a point 
$x' \in X$ such that 
$h_1(x')=h(x',1) \neq x'$. Without loss of
generality, $x' \in A$. Let
$p_A: X \to A$ be the projection
\[ p_A(x) = \left \{ \begin{array}{ll}
          x & \mbox{ for } x \in A; \\
          x_0 & \mbox{ for } x \in B.
   \end{array} \right .
\]

Since $p_A \circ h$ is a homotopy from $\id_A$ to 
$p_A \circ h_1$, and $A$ is rigid, we have
\begin{equation}
\label{idA}
p_A \circ h_1 = \id_A.
\end{equation}
Were 
$h_1(x') \in A$ then it would follow that
\[ h_1(x') = p_A \circ h_1(x') = x', 
          \]
contrary to our choice of $x'$. Therefore
we have $h_1(x') \in B \setminus \{x_0\}$.
But $x' \adj h_1(x')$, so $x'=x_0$.

Since $A_0$ is connected and has more than 1 point,
there exists $x_1 \in A$ such that
$x_1 \adj_{\kappa} x_0$. By the continuity of $h_1$
and choice of $x_0$, we
must therefore have $h_1(x_1)=x_0$, and therefore
$p_A \circ h_1(x_1)=p_A(x_0)=x_0$. 
This contradicts statement~(\ref{idA}), so the
assumption that $h_1$ is not $\id_{X}$ 
is incorrect, and the assertion follows.
\end{proof}

A {\em loop} is a continuous
function $p: C_m \to X$.

The converse of Theorem~\ref{wedgeThm} is not
generally true. In \cite{hmps} it was mentioned (without proof) that a wedge of two long cycles is in general rigid. We give a specific example:

\begin{exl}
\label{wedgeOfCycles}
Let $A$ and $B$ be non-contractible simple closed 
curves. Then $A$ and $B$ are non-rigid~\cite{hmps}.
However, $X=A \wedge B$ is rigid. E.g., using $c_2=8$-adjacency in $\Z^2$, let $A = $
\[ \{a_0=(0,0), a_1=(1,-1), a_2=(2,-1),
         a_3=(3,0), a_4=(2,1), a_5=(1,1)\}
\]
and let $B = $
\[ \{b_0=a_0, b_1=(-1,-1), b_2=(-2,-1),
         b_3=(-3,0), b_4=(-2,1), b_5=(-1,1)\}.
\]
By continuity, if there is a homotopy 
$h: X \times [0,m]_{\Z} \to
X$ - without loss of generality, $m=1$ -
such that $h_0 = \id_{X}$ and 
$h(x,1) \neq x$, then $h$
``pulls"~\cite{hmps} every point of $A$
or of $B$ and therefore
``breaks" one of the loops of $X$, a contradiction since
``breaking" a loop is a discontinuity. 
Thus no such homotopy exists. $\Box$
\end{exl}



\section{Homotopy fixed point spectrum}
The paper \cite{bs19} gave a
brief treatment of homotopy-invariant fixed point theory, defining two quantities $M(f)$ and $X(f)$, respectively 
the minimum and maximum possible number of fixed points among all maps homotopic to $f$. 
When $f:X\to X$, clearly we will have:
\[ 0 \le M(f) \le X(f) \le \#X. \]
We will see in the examples below that any one of these inequalities can be strict in some cases, or equality in some cases.

More generally, for some map $f:X\to X$, we may consider the following set $S(f)$, which we call the \emph{homotopy fixed point spectrum} of $f$:
\[ S(f) = \{ \# \Fix(g) \mid g \simeq f \} \subseteq \{0,\dots,\#X\}. \]

An immediate consequence
of Lemma~\ref{howBEKLLFPP}:

\begin{cor}
\label{0inSpectrum}
Let $(X,\kappa)$ be a
connected digital image, 
where $\#X > 1$. Then
$0 \in S(c)$, where
$c \in C(X,\kappa)$ is a 
constant map.
\end{cor}

We can also consider the \emph{fixed point spectrum} of $X$, defined as:
\[ F(X) = \{ \#\Fix(f) \mid f:X\to X \text{ is continuous} \}  \]

\begin{remark}
{\rm The following assertions are immediate
consequences of the relevant definitions.}
\begin{itemize}
\item If $X$ is a digital image of only one point, then $F(X) = \{1\}$.
\item If $f: X \to X$ is rigid, then
      $S(f) = \{\#\Fix(f)\}$. If $X$ is rigid, then $S(\id) = \{\#X\}$.
\end{itemize}
Since every image $X$ has a constant map and an identity map, we always have:
\[ \{1,\#X\} \subseteq F(X). \]
\end{remark}


The number of fixed points is always preserved by isomorphism:

\begin{lem}
\label{isoMatches}
Let $X$ and $Y$ be isomorphic digital images.
Let $f: X \to X$ be continuous. Then there is
a continuous $g: Y \to Y$ such that
$\#\Fix(f) = \#\Fix(g)$.
\end{lem}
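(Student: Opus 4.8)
The plan is to transport the map $f$ across the isomorphism and check that the fixed-point count is unchanged. Let $\phi: X \to Y$ be an isomorphism, so that $\phi$ is a continuous bijection with continuous inverse $\phi^{-1}: Y \to X$. I would simply define $g = \phi \circ f \circ \phi^{-1}: Y \to Y$. Continuity of $g$ is immediate from Theorem~\ref{composition}, since $g$ is a composition of the continuous maps $\phi^{-1}$, $f$, and $\phi$.

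The heart of the argument is to show $\#\Fix(f) = \#\Fix(g)$, and I would do this by exhibiting $\phi$ as a bijection between the two fixed-point sets. First I would verify $\phi(\Fix(f)) \subseteq \Fix(g)$: if $x \in X$ satisfies $f(x)=x$, then
\[
g(\phi(x)) = \phi\bigl(f(\phi^{-1}(\phi(x)))\bigr) = \phi(f(x)) = \phi(x),
\]
so $\phi(x) \in \Fix(g)$. Conversely, for the reverse inclusion I would take $y \in \Fix(g)$ and show $\phi^{-1}(y) \in \Fix(f)$: from $g(y)=y$ we get $\phi(f(\phi^{-1}(y))) = y$, and applying $\phi^{-1}$ to both sides yields $f(\phi^{-1}(y)) = \phi^{-1}(y)$. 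Hence $\phi$ restricts to a map $\Fix(f) \to \Fix(g)$ whose inverse is the restriction of $\phi^{-1}$; since $\phi$ is a bijection on all of $X$, its restriction to $\Fix(f)$ is injective, and the two inclusions together show it is onto $\Fix(g)$. Therefore $\phi|_{\Fix(f)}$ is a bijection onto $\Fix(g)$, giving $\#\Fix(f) = \#\Fix(g)$.

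I do not anticipate a genuine obstacle here: the statement only asserts the existence of some continuous $g$ with the matching count, so I have freedom to construct the conjugate $g = \phi \circ f \circ \phi^{-1}$ rather than being handed a $g$ and forced to analyze it. The only point requiring a small amount of care is making sure the bijection between fixed-point sets is argued cleanly in both directions, rather than merely asserting that conjugation preserves fixed points; spelling out both inclusions as above handles this. Everything else is routine bookkeeping with the isomorphism and its inverse.
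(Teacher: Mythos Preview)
Your proposal is correct and follows essentially the same approach as the paper: both define $g$ as the conjugate $\phi\circ f\circ\phi^{-1}$ of $f$ by the isomorphism and then verify that $\phi$ carries $\Fix(f)$ bijectively onto $\Fix(g)$. The only cosmetic difference is that the paper phrases the bijection as two inequalities $\#\Fix(f)\le\#\Fix(g)$ and $\#\Fix(g)\le\#\Fix(f)$, while you package it directly as a bijection; you are also slightly more explicit in invoking Theorem~\ref{composition} for the continuity of $g$.
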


\begin{proof}
Let $G: X \to Y$ be an isomorphism.
Let $A = Fix(f)$.
Since $G$ is one-to-one, $\#G(A) = \#A$. Let
$g: Y \to Y$ be defined by 
$g=G \circ f \circ G^{-1}$. For $y_0 \in G(A)$,
let $x_0=G^{-1}(y_0)$. Then
\[ g(y_0) = G \circ f \circ G^{-1}(y_0) =
   G \circ f(x_0) = G(x_0) = y_0.
\]
Let $B = \Fix(g)$
It follows that $G(A) \subseteq B$, so
$\#A \le \#B$.

Similarly, let $y' \in B$ and let
$x' = G^{-1}(y')$. Then
\[ f(x') = G^{-1} \circ g \circ G(x') =
   G^{-1} \circ g(y') = G^{-1}(y') = x'.
\]
It follows that $G^{-1}(B) \subseteq A$, so
$\#B \le \#A$.

Thus, $\#\Fix(f) = \#\Fix(g)$.
\end{proof}

As an immediate consequence, we have the following.

\begin{cor}
Let $X$ and $Y$ be isomorphic digital images.
Then $F(X)=F(Y)$.
\end{cor}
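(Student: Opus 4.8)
The plan is to derive this directly from Lemma~\ref{isoMatches} by proving the two inclusions $F(X)\subseteq F(Y)$ and $F(Y)\subseteq F(X)$ separately and then combining them. Since $F(X)$ is defined as the set of values $\#\Fix(f)$ ranging over continuous self-maps $f\colon X\to X$, it suffices to transport each such fixed-point count from one image to the other while preserving its value.

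First I would establish $F(X)\subseteq F(Y)$. Take an arbitrary $n\in F(X)$; by definition $n=\#\Fix(f)$ for some continuous $f\colon X\to X$. Applying Lemma~\ref{isoMatches} to the isomorphic pair $X,Y$ produces a continuous $g\colon Y\to Y$ with $\#\Fix(g)=\#\Fix(f)=n$, so $n\in F(Y)$. As $n$ was arbitrary, the inclusion follows.

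The reverse inclusion $F(Y)\subseteq F(X)$ follows by the same argument with the roles of $X$ and $Y$ interchanged. The only point requiring a remark is that ``isomorphic'' is a symmetric relation: if $G\colon X\to Y$ is an isomorphism, then $G^{-1}\colon Y\to X$ is a continuous bijection whose inverse $G$ is continuous, so $G^{-1}$ is itself an isomorphism and $Y$ is isomorphic to $X$. Hence Lemma~\ref{isoMatches} applies with $Y$ in place of $X$, giving for each continuous $g\colon Y\to Y$ a continuous $f\colon X\to X$ with the same number of fixed points.

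I do not expect any genuine obstacle here, as the corollary is labeled an immediate consequence; the entire content lies in Lemma~\ref{isoMatches}, and the symmetry of the isomorphism relation is the only auxiliary observation needed. Combining the two inclusions yields $F(X)=F(Y)$, completing the proof.
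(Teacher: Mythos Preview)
Your proposal is correct and matches the paper's approach exactly: the paper presents this corollary as an immediate consequence of Lemma~\ref{isoMatches} without giving any further proof, and your argument---applying the lemma in both directions using the symmetry of isomorphism---is precisely the intended justification.
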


There is a certain regularity to the fixed point spectrum for connected digital images. When $X$ has only a single point, we have already remarked that $F(X)=\{1\}$. For images of more than 1 point, we will show that $F(X)$ always includes 0, 1, and $\#X$, and, provided the image is large enough, the set $F(X)$ also includes 2 and 3. 

The following statements hold for connected images. We discuss the fixed point spectrum of disconnected images in terms of their connected components in Theorem \ref{duF} and its corollary.
We begin with a simple lemma:

\begin{lem}\label{nbhdF(X)}
Let $X$ be any connected digital image with $\#X > 1$. Let $x_0 \in X$, and let $0\le k \le \#N^*(x_0)$. Then $k \in S(c) \subseteq F(X)$, where $c$ is a constant map.
\end{lem}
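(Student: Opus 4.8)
The plan is to realize every admissible value $k$ as the number of fixed points of some map homotopic to the constant map, by collapsing all of $X$ onto $x_0$ while leaving a chosen subset of the closed neighborhood $N^*(x_0) = \{x_0\}\cup\{y : y\adj x_0\}$ fixed. Throughout I take $c$ to be the constant map at $x_0$; the inclusion $S(c)\subseteq F(X)$ is immediate, since any $g\simeq c$ is in particular a continuous self-map and so $\#\Fix(g)\in F(X)$. It therefore suffices to produce, for each $k$ with $0\le k\le \#N^*(x_0)$, a map $g\simeq c$ with exactly $k$ fixed points.

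For $1\le k\le \#N^*(x_0)$ I would fix a set $S$ with $x_0\in S\subseteq N^*(x_0)$ and $\#S=k$, which is possible because $x_0\in N^*(x_0)$ and $\#N^*(x_0)\ge k$, and define $g\colon X\to X$ by $g(x)=x$ for $x\in S$ and $g(x)=x_0$ otherwise. Both the continuity of $g$ and the relation $g\simeq c$ rest on the single observation that each point of $S$ satisfies $x\adjeq x_0$. For continuity, whenever $x\adj y$ the values $g(x),g(y)$ lie in $S\cup\{x_0\}$, and checking the three cases (both in $S$; exactly one in $S$; neither in $S$) gives $g(x)\adjeq g(y)$, the mixed case using $S\subseteq N^*(x_0)$. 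For the homotopy, the one-step map $h\colon X\times[0,1]_{\Z}\to X$ with $h(x,0)=x_0$ and $h(x,1)=g(x)$ works: each slice $h_t$ is continuous (namely $c$ and $g$), and each track $h_x$ is a path precisely because $g(x)\adjeq x_0$. Since $x_0\in S$, a point $x\notin S$ satisfies $g(x)=x_0\neq x$, so $\Fix(g)=S$ and $\#\Fix(g)=k$.

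The remaining value $k=0$ cannot be obtained from this construction, which is the main point to watch: the collapse map always fixes $x_0$, so it produces at least one fixed point. This is exactly why, for $k\ge1$, I insist $x_0\in S$ so that $\Fix(g)$ equals $S$ rather than $S\cup\{x_0\}$. For $k=0$ I would instead invoke Corollary \ref{0inSpectrum}, which gives $0\in S(c)$ directly from the hypotheses that $X$ is connected with $\#X>1$. One uses implicitly that all constant maps on the connected image $X$ are homotopic, so $S(c)$ is independent of the base point and taking $c\equiv x_0$ is harmless.
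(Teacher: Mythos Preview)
Your proof is correct and follows essentially the same approach as the paper: for $k\ge 1$ you collapse $X$ onto $x_0$ while fixing a chosen $k$-element subset of $N^*(x_0)$ containing $x_0$, and for $k=0$ you invoke Corollary~\ref{0inSpectrum}. Your argument is somewhat more explicit than the paper's in verifying continuity, the one-step homotopy to $c$, and the fact that $\Fix(g)=S$ (in particular the care you take to include $x_0\in S$), but the construction is the same.
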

\begin{proof}
By Corollary~\ref{0inSpectrum},
a constant map is homotopic to a map with no fixed points, so $0\in S(c)$ as desired.

For $k>0$, let $n=\#N^*(x_0)$ and write 
\[ N^*(x_0) = \{x_0,x_1,\dots,x_{n-1}\}. \]
Then define $f:X\to X$ by:
\[ f(x) = \begin{cases} x & \text{ if $x=x_i$ for some $i<k$,} \\
x_0 &\text{ otherwise.}
\end{cases} \]
Then $f$ is continuous with $\Fix(f) = \{x_0,\dots,x_{k-1}\}$ and thus $k\in F(X)$. Furthermore, $f$ is homotopic to the constant map at $x_0$, and so in fact $k\in S(c)$. 
\end{proof}



\begin{thm}\label{0123}
Let $X$ be a connected digital image, and let $c:X\to X$ be any constant map. If $\#X 
\ge 2$ then
\[ \{0,1,2\}\subseteq S(c). \]
If $\#X \ge 3$, then
\[ \{0,1,2,3\} \subseteq S(c). \]
\end{thm}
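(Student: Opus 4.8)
The plan is to reduce both inclusions to Lemma~\ref{nbhdF(X)}, which already delivers $\{0,1,\dots,\#N^*(x_0)\}\subseteq S(c)$ as soon as we locate a suitable point $x_0$. The first thing I would record is that the spectrum $S(c)$ does not depend on which constant map $c$ is chosen: since $X$ is connected, any path $r:[0,m]_{\Z}\to X$ joining the image point of one constant map to that of another gives, via $h(x,t)=r(t)$, a homotopy between the two constant maps, so they share the same homotopy fixed point spectrum. Lemma~\ref{nbhdF(X)} is stated for the constant map at $x_0$, so this remark is exactly what lets me conclude the result for the theorem's arbitrary $c$. Hence it suffices to verify the two inclusions for the constant map at whatever point $x_0$ is most convenient.

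For the case $\#X\ge 2$, I would note that connectivity forces every point of $X$ to have at least one neighbor, so for an arbitrary $x_0\in X$ we have $\#N^*(x_0)\ge 2$. Applying Lemma~\ref{nbhdF(X)} with $k=0,1,2$ then yields $\{0,1,2\}\subseteq S(c)$ at once. (One can also read off the three memberships individually: $0\in S(c)$ is Corollary~\ref{0inSpectrum}, $1=\#\Fix(c)$ since $c\simeq c$, and $2$ comes from the lemma.)

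For the case $\#X\ge 3$, the key step is a small graph-theoretic observation: a connected digital image with at least three points must contain a point of degree at least $2$. Indeed, a connected graph in which every vertex has degree at most $1$ is either a single vertex or a single edge, hence has at most two points; with $\#X\ge 3$ this is impossible. Choosing such a high-degree point $x_0$ gives $\#N^*(x_0)\ge 3$, and Lemma~\ref{nbhdF(X)} applied with $k=0,1,2,3$ produces $\{0,1,2,3\}\subseteq S(c)$.

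I expect no serious obstacle here, since the computational work is entirely packaged inside Lemma~\ref{nbhdF(X)}. The only two points that require care are the existence of a degree-$\ge 2$ vertex once $\#X\ge 3$, and the homotopy-invariance of $S(c)$ across the choice of constant map, which bridges the lemma's specific constant map to the theorem's arbitrary one.
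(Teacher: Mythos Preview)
Your proposal is correct and follows essentially the same approach as the paper: both reduce the theorem to Lemma~\ref{nbhdF(X)} by locating a point $x_0$ with $\#N^*(x_0)$ large enough, using the graph-theoretic fact that a connected image with at least three points must contain a vertex of degree $\ge 2$. Your explicit remark that $S(c)$ is independent of the chosen constant map (since all constant maps on a connected image are homotopic) is a useful bit of care that the paper leaves implicit.
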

\begin{proof}
If $\#X = 2$, then $X$ consists simply of two adjacent points. Thus $\#N^*(x)=2$ for each $x\in X$, and so Lemma \ref{nbhdF(X)} implies that $\{0,1,2\}\subseteq S(c)$.

When $\#X\ge 3$, there must be some $x\in X$ with $\#N^*(x) \ge 3$. (Otherwise the image would consist only of disjoint pairs of adjacent points, which would not be connected.) Thus by Lemma \ref{nbhdF(X)} we have $\{0,1,2,3\}\subseteq S(c)$. 
\end{proof}

Since we always have $\#X \in S(\id)$ and 
\[S(c)\cup S(\id) \subseteq F(X) \subseteq \{0,1,\dots,\#X\},\]
the theorem above directly gives:

\begin{cor}\label{0123X}
Let $X$ be a connected digital image. If $\#X = 2$ then
\[ F(X) = \{0,1,2\}. \]
If $\#X > 2$, then
\[ \{0,1,2,3,\#X\} \subseteq F(X). \]
\end{cor}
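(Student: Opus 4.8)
The plan is to treat this as a direct assembly of Theorem~\ref{0123} together with two elementary facts about $F(X)$, so there is essentially no hard step; the substantive work has already been carried out in Lemma~\ref{nbhdF(X)} and Theorem~\ref{0123}. First I would record the two containments on which the statement rests. For any continuous $f:X\to X$, every $g\simeq f$ is itself a continuous self-map, so $\#\Fix(g)\in F(X)$; hence $S(f)\subseteq F(X)$, and in particular $S(c)\subseteq F(X)$ and $S(\id)\subseteq F(X)$. On the other side, any self-map fixes between $0$ and $\#X$ points, so $F(X)\subseteq\{0,1,\dots,\#X\}$. I would also note that $\id\simeq\id$ (via the constant homotopy) and $\#\Fix(\id)=\#X$, so that $\#X\in S(\id)\subseteq F(X)$.

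With these in hand, I would split into the two cases of the statement. When $\#X=2$, the first part of Theorem~\ref{0123} gives $\{0,1,2\}\subseteq S(c)\subseteq F(X)$; combining this with the upper bound $F(X)\subseteq\{0,1,2\}$ forces the equality $F(X)=\{0,1,2\}$. It is precisely this upper bound, and not the containments alone, that upgrades the conclusion from ``$\supseteq$'' to ``$=$'' in the two-point case. When $\#X>2$, the second part of Theorem~\ref{0123} gives $\{0,1,2,3\}\subseteq S(c)\subseteq F(X)$, and adjoining $\#X\in S(\id)\subseteq F(X)$ yields $\{0,1,2,3,\#X\}\subseteq F(X)$, as required.

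Since the argument is pure bookkeeping, I do not expect any genuine obstacle. The only point that deserves care is that the $\#X=2$ case is an \emph{equality} rather than a mere inclusion: this relies on the trivial upper bound $F(X)\subseteq\{0,1,\dots,\#X\}$ pinning down $F(X)$ exactly, whereas for $\#X>2$ no such pinning occurs and one can assert only the inclusion $\{0,1,2,3,\#X\}\subseteq F(X)$. This asymmetry is consistent with the later examples (such as $C_n$) in which $F(X)\neq\{0,1,\dots,\#X\}$, so no stronger conclusion should be attempted in the large case.
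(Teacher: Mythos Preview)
Your proposal is correct and follows essentially the same approach as the paper: combine Theorem~\ref{0123} with the containments $S(c)\cup S(\id)\subseteq F(X)\subseteq\{0,1,\dots,\#X\}$ and the observation $\#X\in S(\id)$, using the upper bound to force equality when $\#X=2$. The paper's proof is simply a one-line version of what you have written out in detail.
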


We have already seen that $\#X \in F(X)$ in all cases. There is an easy condition that determines whether or not $\#X-1\in F(X)$. 

\begin{lem}
\label{1off}
Let $X$ be connected with $n=\#X >1$. Then $n-1 \in F(X)$ if and only if there are distinct points $x_1,x_2 \in X$ with 
$N(x_1) \subseteq N^*(x_2)$.
\end{lem}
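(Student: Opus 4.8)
The plan is to read the condition $n-1\in F(X)$ as the statement that there is a continuous self-map moving \emph{exactly one} point. Indeed, any $f$ with $\#\Fix(f)=n-1$ fixes all of $X$ except a single point $x_1$, and sends it to $x_2:=f(x_1)\neq x_1$; conversely, any assignment of the form ``send $x_1$ to some $x_2\neq x_1$, fix everything else'' automatically has exactly $n-1$ fixed points. So both implications reduce to deciding when such a one-point-moving map is continuous, and throughout I would work with the adjacency characterization of continuity (the unlabeled theorem from \cite{Bx99}: $f$ is continuous iff $u\adj v$ implies $f(u)\adjeq f(v)$).

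For the forward direction I would assume $f$ has exactly $n-1$ fixed points, let $x_1$ be the unique moved point, and set $x_2=f(x_1)$, noting $x_1\neq x_2$. For an arbitrary $y\in N(x_1)$ we have $x_1\adj y$, and since adjacency is irreflexive $y\neq x_1$, so $f(y)=y$. Continuity then forces $f(x_1)\adjeq f(y)$, i.e.\ $x_2\adjeq y$, which is exactly the statement $y\in N^*(x_2)$. Letting $y$ range over $N(x_1)$ yields $N(x_1)\subseteq N^*(x_2)$, as required. For the converse, given distinct $x_1,x_2$ with $N(x_1)\subseteq N^*(x_2)$, I would define $f$ by $f(x_1)=x_2$ and $f(x)=x$ for $x\neq x_1$, and verify continuity edge by edge: on any edge $u\adj v$ with neither endpoint equal to $x_1$ both values are fixed so the condition is trivial, while if $u=x_1$ then $v\in N(x_1)\subseteq N^*(x_2)$ gives $v\adjeq x_2=f(u)$ and hence $f(u)\adjeq f(v)=v$. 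Since $f$ fixes every point but $x_1$, we get $\#\Fix(f)=n-1$, so $n-1\in F(X)$.

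The argument is short, so the ``main obstacle'' is really only a matter of care rather than difficulty: one must interpret $N$ and $N^*$ correctly, using the closed neighborhood $N^*(x_2)$ precisely so that the \emph{adjacent-or-equal} relation $x_2\adjeq y$ (covering the case $y=x_2$) is absorbed uniformly into the membership $y\in N^*(x_2)$. It is also worth noting that the exact fixed-point count depends on $x_1\neq x_2$, which is guaranteed because $x_1$ is moved in the forward direction and is imposed as the distinctness hypothesis in the converse; the connectedness of $X$ plays no essential role in the logic beyond being the standing assumption of the section (it merely ensures $N(x_1)\neq\varnothing$, making the containment a genuine constraint).
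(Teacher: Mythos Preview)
Your proof is correct and follows essentially the same approach as the paper: both directions reduce to the one-point-moving map $f(x_1)=x_2$, $f(x)=x$ otherwise, with continuity checked edge by edge via the $N(x_1)\subseteq N^*(x_2)$ condition. Your commentary on the roles of $N$ versus $N^*$ and of connectedness is accurate and a nice addition, but the core argument is identical to the paper's.
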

\begin{proof}
Suppose there are points
$x_1,x_2 \in X$, $x_1 \neq x_2$, such that
$N(x_1)  \subseteq N^*(x_2)$. 
Then the map 
\[  f(x_1)=x_2,\quad f(x)=x \text{ for all } x\neq x_1, \]
is a self-map on $X$ with exactly $n-1$ fixed points. That
$f$ is continuous is seen as follows. Suppose
$x,x' \in X$ with $x \adj x'$.
\begin{itemize}
    \item If $x_1 \not \in \{x,x'\}$, then
          \[ f(x)=x\adj x' = f(x'). \]
    \item If, say, $x = x_1$, then 
          $x' \in N(x_1) \subseteq N^*(x_2)$, so
          \[ f(x')=x' \adjeq x_2 = f(x_1). \]
\end{itemize}
Thus $f$ is continuous, and we conclude 
$n-1 \in F(X)$.

Now assume that $n-1 \in F(X)$. Thus there is some continuous self-map $f$ with exactly $n-1$ fixed points. Let $x_1$ be the single point not fixed by $f$, and let $x_2 = f(x_1)$. Then let $x \in X$
with $x \adj x_1$. Then
\[ x=f(x) \adjeq f(x_1) = x_2, \]
so $N(x_1) \subseteq N^*(x_2)$.
\end{proof}

Lemma~\ref{1off} can be used to show that a large class of digital images will satisfy $n-1 \not \in F(X)$. For example when $X=C_n$ for $n>4$, no 
$N(x_i)$ is contained in
$N^*(x_j)$ for $j \neq i$. Thus we have:
\begin{cor}
Let $n>4$. Then $n-1 \not \in F(C_n)$.
\end{cor}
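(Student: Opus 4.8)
The plan is to reduce everything to Lemma~\ref{1off}. Since $C_n$ is connected with $\#C_n = n > 1$, that lemma says $n-1 \in F(C_n)$ would be \emph{equivalent} to the existence of two distinct points $x_1,x_2 \in C_n$ with $N(x_1) \subseteq N^*(x_2)$. So it suffices to prove the purely combinatorial statement that, when $n > 4$, the cycle $C_n$ admits no such pair of distinct points; the corollary then follows by contraposition of the ``if'' direction of Lemma~\ref{1off}.

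To carry this out I would first record the neighborhoods explicitly. In $C_n$ each $x_i$ is adjacent only to $x_{i-1}$ and $x_{i+1}$, so $N(x_i) = \{x_{i-1}, x_{i+1}\}$ and $N^*(x_j) = \{x_{j-1}, x_j, x_{j+1}\}$, all subscripts read modulo $n$. Because $n > 4$, the residues $i-1$ and $i+1$ are distinct, so $N(x_i)$ really has two elements, and $j-1, j, j+1$ are three distinct residues. Then I would suppose for contradiction that some distinct $x_i, x_j$ satisfy $N(x_i) \subseteq N^*(x_j)$, i.e. $\{x_{i-1}, x_{i+1}\} \subseteq \{x_{j-1}, x_j, x_{j+1}\}$.

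The heart of the argument is a cyclic-distance count. The two elements of $\{i-1, i+1\}$ sit at cyclic distance $\min(2, n-2)$, which equals $2$ precisely because $n > 4$. Among the three two-element subsets of $\{j-1, j, j+1\}$, namely $\{j-1,j\}$, $\{j,j+1\}$, and $\{j-1,j+1\}$, the first two have cyclic distance $1$ and only $\{j-1,j+1\}$ has cyclic distance $2$. Hence the inclusion forces $\{i-1, i+1\} = \{j-1, j+1\}$ as subsets of $\Z/n\Z$. Matching the two ways this can occur gives either $i \equiv j$, which is excluded, or $i-1 \equiv j+1$ and $i+1 \equiv j-1$, which yields $2 \equiv -2 \pmod n$, i.e. $n \mid 4$.

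This last alternative is exactly what $n > 4$ eliminates, and that is where the numerical bound is genuinely used: for $n \le 4$ the ``two apart'' relation can wrap around and coincide with itself, but for $n > 4$ it cannot. Both cases being impossible, no such pair $x_i, x_j$ exists, so by Lemma~\ref{1off} we conclude $n-1 \notin F(C_n)$. I do not anticipate a real obstacle here beyond keeping the modular bookkeeping honest — in particular, confirming that all the potential degeneracies ($x_{i-1} = x_{i+1}$, or collisions among $x_{j-1}, x_j, x_{j+1}$, or the cyclic distance of $\{i-1,i+1\}$ dropping below $2$) are each ruled out by the hypothesis $n > 4$, so that the counting of pairs by cyclic distance is valid.
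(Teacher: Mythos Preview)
Your proposal is correct and follows exactly the paper's approach: apply Lemma~\ref{1off} and verify that for $n>4$ no distinct $x_i,x_j\in C_n$ satisfy $N(x_i)\subseteq N^*(x_j)$. The paper simply asserts this last combinatorial fact without justification, whereas you supply the modular-arithmetic details; your cyclic-distance argument is sound and the case analysis is complete.
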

In particular this means that 
$4\not \in F(C_5)$, so the result of 
Theorem~\ref{0123X} cannot in general be 
improved to state that $4\in F(X)$ for 
all images of more than 4 points.

\section{Pull indices}
Let $\overline \Fix(f)$ be the complement of the fixed point set, that is,
\[ \overline \Fix(f) = \{ x \in X \mid f(x)\neq x\}. \]
When $f(x)\neq x$, we say \emph{$f$ moves $x$}.
\begin{definition}
Let $(X,\kappa)$ be a digital image with $\#X>1$ and 
let $x \in X$. 
The {\em pull index of $x$},
$P(x)$ or $P(x,X)$ or $P(x,X,\kappa)$, is
\[ P(x) = \min \{\#\overline\Fix(f) \mid f:X\to X \text{ is continuous and } f(x)\neq x\}. \]
\end{definition}

When $f(x)\neq x$, the set $\overline\Fix(f)$ always contains at least the point $x$, and so $P(x)\ge 1$ for any $x$ that is moved by
some~$f$.

\begin{exl}
Let $X=[1,3]_{\Z}$ with $c_1$-adjacency.

To compute $P(3)$, consider the function $f(x)=\min\{x,2\}$. This is continuous, not the identity, and $\Fix(f) = \{1,2\}$, and thus $P(3) = 1$. Similarly we can show that $P(1)=1$.

But we have $P(2)=2$, since any continuous self-map~$f$ 
on~$X$ that moves 2 must also move at least
one other point: if $f(2)=1$ we must have $f(3) \in \{1,2\}$, and if $f(2)=3$ we must have $f(1)\in\{2,3\}$.
\end{exl}

\begin{prop}
\label{pullThm}
Let $(X,\kappa)$ be a connected digital image with
$n=\#X >1$. Let $m \in \N$, $1 \le m \le n$. 
Suppose, for all $x \in X$, we have
$P(x) \ge m$ . Then
\[ F(X) \cap \{i\}_{i=n-m+1}^{n-1} = \emptyset.
\]
\end{prop}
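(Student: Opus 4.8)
The plan is to reformulate the statement in terms of \emph{moved} points rather than fixed points, using the identity $\#\overline\Fix(f) = n - \#\Fix(f)$. Under this identity, the forbidden range of fixed-point counts $\{n-m+1,\dots,n-1\}$ corresponds precisely to the range $\{1,\dots,m-1\}$ of moved-point counts. Thus the proposition is equivalent to the assertion that no continuous self-map of $X$ moves between $1$ and $m-1$ points inclusive, and this is what I would establish.

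First I would argue by contradiction, assuming some $i \in F(X)$ with $n-m+1 \le i \le n-1$, so that there is a continuous $f:X\to X$ with $\#\Fix(f) = i$. Since $i \le n-1 < n = \#X$, the map $f$ is not the identity, and hence there exists at least one point $x \in X$ with $f(x) \neq x$; that is, $f$ moves $x$.

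The decisive step is then simply to invoke the definition of the pull index. Because $f$ is continuous and moves $x$, the map $f$ lies in the collection over which $P(x)$ is taken as a minimum, so $\#\overline\Fix(f) \ge P(x)$. By hypothesis $P(x) \ge m$, whence $\#\overline\Fix(f) \ge m$. On the other hand, $\#\overline\Fix(f) = n - i$, and the bound $i \ge n-m+1$ forces $n - i \le m-1 < m$, contradicting $\#\overline\Fix(f) \ge m$. This completes the argument.

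I do not expect a genuine obstacle here: the content of the proposition is essentially packaged into the definition of $P(x)$, and the proof is a direct unwinding of that definition. The only point requiring care is the bookkeeping that matches the index range $\{n-m+1,\dots,n-1\}$ with the moved-point range $\{1,\dots,m-1\}$, together with the observation that $i < n$ guarantees the existence of a moved point so that the pull-index inequality can be applied at all. (Note also that for $m=1$ the indexed set $\{i\}_{i=n-m+1}^{n-1}$ is empty, so the statement is vacuously true, consistent with the fact that $P(x)\ge 1$ always holds trivially.)
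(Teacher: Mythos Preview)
Your proof is correct and follows essentially the same approach as the paper's: any non-identity continuous self-map moves some point $x$, and since $P(x)\ge m$ it must move at least $m$ points, so it has at most $n-m$ fixed points. The paper's version is simply terser, stating this directly rather than framing it as a contradiction.
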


\begin{proof}
By hypothesis,
$f \in C(X,\kappa) \setminus \{\id_X\}$ implies $f$ moves
at least $m$ points, hence 
$\#\Fix(f) \le n-m$. 
The assertion follows.
\end{proof}

\begin{thm}
\label{F(X)andP(x)}
Let $(X,\kappa)$ be a connected 
digital image with $n=\#X > 1$.
The following are equivalent.

{\rm 1)} $n-1 \in F(X)$.

{\rm 2)} There are distinct
   $x_1,x_2 \in X$ such that
   $N(x_1) \subseteq N^*(x_2)$.
   
{\rm 3)} There exists $x \in X$
          such that $P(x)=1$.
\end{thm}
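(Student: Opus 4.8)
The plan is to observe that the equivalence of (1) and (2) is precisely the content of Lemma~\ref{1off}, which may be cited directly. It therefore suffices to establish the equivalence of (1) and (3), and both directions follow almost immediately once the definition of the pull index is unpacked, so no fresh construction is needed.

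First I would prove (1) $\Rightarrow$ (3). Assuming $n-1 \in F(X)$, there is a continuous $f: X \to X$ with $\#\Fix(f) = n-1$, hence $\#\overline\Fix(f)=1$. Let $x$ be the unique point moved by $f$. Because $f$ moves $x$ while $\#\overline\Fix(f) = 1$, the definition of the pull index gives $P(x) \le 1$; and since $x$ is moved by $f$ we have $P(x) \ge 1$. Thus $P(x) = 1$.

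Next I would prove (3) $\Rightarrow$ (1). Assuming $P(x) = 1$ for some $x$, the definition of $P(x)$ furnishes a continuous $f$ with $f(x) \neq x$ and $\#\overline\Fix(f) = 1$. Since $\overline\Fix(f)$ necessarily contains $x$ and has exactly one element, we conclude $\overline\Fix(f) = \{x\}$, so $\#\Fix(f) = n-1$ and hence $n-1 \in F(X)$.

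The argument is entirely routine and presents no serious obstacle; the only point requiring a moment's care is to confirm that a map achieving $\#\overline\Fix(f) = 1$ and moving $x$ really moves exactly $x$ rather than some other single point, which holds because $\overline\Fix(f)$ always contains every moved point. As an alternative, one could instead arrange the three statements in a single cycle (1) $\Rightarrow$ (2) $\Rightarrow$ (3) $\Rightarrow$ (1), drawing the first implication from Lemma~\ref{1off} and, from the witnesses $x_1,x_2$ of (2), building the explicit single-point-moving map of that lemma to certify $P(x_1)=1$; but citing Lemma~\ref{1off} wholesale for the equivalence (1) $\Leftrightarrow$ (2) and adding only the short (1) $\Leftrightarrow$ (3) argument above is the cleaner route.
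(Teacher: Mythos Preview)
Your proposal is correct and follows essentially the same approach as the paper: cite Lemma~\ref{1off} for $(1)\Leftrightarrow(2)$ and then argue $(1)\Leftrightarrow(3)$ directly from the definition of the pull index. The paper's version is simply terser, compressing both directions of $(1)\Leftrightarrow(3)$ into a single sentence.
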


\begin{proof}
$1) \Leftrightarrow 2)$ is shown in Lemma~\ref{1off}.

$1) \Leftrightarrow 3)$: We have
$n-1 \in F(X)$ $\Leftrightarrow$
there exists $f \in C(X)$ with
exactly~$n-1$ fixed points, i.e.,
the only $x \in X$ not fixed 
by $f$ has $P(x)=1$.
\end{proof}

The following generalizes
$1) \Rightarrow 3)$ of
Theorem~\ref{F(X)andP(x)}.

\begin{prop}
Let $(X,\kappa)$ be a connected 
digital image with $n=\#X > 1$.
Let $k \in [1,n-1]_{\Z}$. Then
$k \in F(X)$ implies there
exist distinct $x_1, \ldots, x_{n-k} \in X$ such that
$P(x_i) \le n-k$.
\end{prop}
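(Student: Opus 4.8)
The plan is to unwind the definitions of $F(X)$ and of the pull index and to observe that a single witnessing map does all the work. Since $k \in F(X)$, by definition there is a continuous self-map $f: X \to X$ with $\#\Fix(f) = k$. Because $k \le n-1$, this $f$ is not the identity, and it moves exactly $n-k \ge 1$ points; that is, $\#\overline\Fix(f) = n-k$.

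First I would name the moved points: set $\overline\Fix(f) = \{x_1, \ldots, x_{n-k}\}$, which are distinct by construction and number exactly $n-k$. Next I would apply the definition of $P$ to each $x_i$, using this same $f$ as the competitor in the minimization. For each $i$ we have $f(x_i) \neq x_i$, so $f$ is admissible in the set $\{\#\overline\Fix(g) \mid g \text{ continuous}, g(x_i) \neq x_i\}$ whose minimum is $P(x_i)$. Hence $P(x_i) \le \#\overline\Fix(f) = n-k$ for every $i$, which is exactly the conclusion.

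I do not expect a genuine obstacle here: the argument is essentially a direct translation of the definitions. The only point requiring care is the observation that the $n-k$ distinct points to be produced should be taken as precisely the points moved by one map $f$ realizing $k$ fixed points, so that this single map simultaneously bounds all of their pull indices by $n-k$. It is worth remarking that this recovers the implication $1) \Rightarrow 3)$ of Theorem~\ref{F(X)andP(x)} as the special case $k = n-1$: there we obtain a single point $x_1$ with $P(x_1) \le 1$, and since $P(x_1) \ge 1$ always holds for a moved point, $P(x_1) = 1$.
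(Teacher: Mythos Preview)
Your proof is correct and follows essentially the same approach as the paper: both take a map $f$ realizing $k$ fixed points, let $x_1,\dots,x_{n-k}$ be the moved points, and use $f$ itself as the witness showing $P(x_i)\le n-k$. Your write-up is in fact more explicit than the paper's, and your closing remark recovering the implication $1)\Rightarrow 3)$ of Theorem~\ref{F(X)andP(x)} as the case $k=n-1$ is a nice addition.
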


\begin{proof}
$k \in F(X)$ implies there
exists $f \in C(X)$ with exactly
$k$ fixed points, hence distinct $x_1, \ldots, x_{n-k} \in X$
such that $x_i \not \in \Fix(f)$.
Thus for each~$i$, the members
of $\Fix(f)$ are not pulled by~$f$
and~$x_i$. Thus $P(x_i) \le n-k$.
\end{proof}

\section{Retracts}
In this section, we study
how retractions interact
with fixed point spectra.

\begin{thm}
{\rm \cite{Bx94}}
\label{extension}
Let $(X,\kappa)$ be a digital image and 
let $A \subseteq X$. Then $A$ is a retract 
of $X$ if and only if for every continuous
$f: (A,\kappa) \to (Y,\lambda)$ there 
is an extension of $f$ to a continuous
$g: (X,\kappa) \to (Y,\lambda)$.
\end{thm}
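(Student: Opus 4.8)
The plan is to prove the two implications separately, each by an explicit construction, since both directions are essentially an exercise in composing continuous maps. The backward direction (extension property $\Rightarrow$ retract) is the quicker one, so I would dispatch it first by a clever choice of target image.

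For the backward direction, I would specialize the extension hypothesis to a single well-chosen function. Take the target to be $(A,\kappa)$ itself, where $A$ carries the adjacency $\kappa$ inherited from $X$, and take $f = \id_A \colon A \to A$, which is trivially continuous. By hypothesis there is a continuous extension $g \colon (X,\kappa) \to (A,\kappa)$ with $g(a) = \id_A(a) = a$ for every $a \in A$. A continuous self-map of $X$ whose image lies in $A$ and which fixes $A$ pointwise is exactly a retraction, so $A$ is a retract of $X$.

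For the forward direction (retract $\Rightarrow$ extension property), I would start from a retraction $r \colon X \to A$ satisfying $r(a) = a$ for all $a \in A$, and given an arbitrary continuous $f \colon (A,\kappa) \to (Y,\lambda)$, I would set $g = f \circ r$. Then for $a \in A$ we have $g(a) = f(r(a)) = f(a)$, so $g$ extends $f$; continuity of $g$ follows from Theorem~\ref{composition} applied to the composite of $r$ and $f$.

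The one point that needs care—and the only place where anything could go wrong—is the continuity bookkeeping in the forward direction, namely that $r$ and $f$ compose legitimately. I would note that the adjacency on $A$ is simply the restriction of $\kappa$, so that $r$, viewed as a map into $(A,\kappa)$, is $(\kappa,\kappa)$-continuous (its continuity as a self-map of $X$ with image in $A$ is unchanged by regarding the codomain as the subimage $A$), and $f$ is $(\kappa,\lambda)$-continuous; Theorem~\ref{composition} then yields that $g$ is $(\kappa,\lambda)$-continuous. Beyond this minor identification of the inherited adjacency, there is no substantial obstacle: both directions reduce to the composition theorem and a direct check of the fixing/extension condition on $A$.
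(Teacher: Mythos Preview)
Your proof is correct. The paper does not actually include a proof of this theorem---it is quoted from \cite{Bx94}---but the sentence immediately following the theorem statement notes that ``an extension of $f$ is obtained by using $g=r \circ f$'' (evidently a typo for $f\circ r$), which is exactly your construction in the forward direction; your backward direction via extending $\id_A$ is the standard argument as well.
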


In the proof of Theorem~\ref{extension}, 
an extension of $f$ is obtained by using 
$g=r \circ f$, where $r: X \to A$ is 
a retraction. We use this in the proof of 
the next assertion.

\begin{thm}
\label{retractF}
Let $A$ be a retract of $(X,\kappa)$. Then
$F(A) \subseteq F(X)$.
\end{thm}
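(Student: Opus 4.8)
The plan is to show that every number appearing as a fixed-point count of some continuous self-map of $A$ also appears as a fixed-point count of a suitably constructed continuous self-map of $X$. So fix a continuous $f\colon A \to A$ and let $r\colon X \to A$ be a retraction, which exists because $A$ is a retract of $X$. I want to build from $f$ a continuous $g\colon X \to X$ with $\#\Fix(g) = \#\Fix(f)$, which will immediately give $\#\Fix(f) \in F(X)$ and hence $F(A) \subseteq F(X)$.

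The natural candidate, following the remark after Theorem~\ref{extension}, is $g = i \circ f \circ r$, where $i\colon A \to X$ is the inclusion; concretely $g(x) = f(r(x))$ for all $x \in X$. First I would check continuity: $r$ is $\kappa$-continuous by definition of a retraction, $f$ is continuous by hypothesis, and the inclusion $i$ is continuous, so by Theorem~\ref{composition} the composite $g$ is $\kappa$-continuous. Note that $g$ maps $X$ into $A$, since the image of $g$ lands in $f(A) \subseteq A$.

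The key step is to verify $\Fix(g) = \Fix(f)$ exactly, not merely that the cardinalities agree. One inclusion is easy: if $a \in \Fix(f)$, then since $a \in A$ we have $r(a) = a$ (retraction property), so $g(a) = f(r(a)) = f(a) = a$, giving $a \in \Fix(g)$. For the reverse inclusion, suppose $x \in \Fix(g)$, i.e. $f(r(x)) = x$. Since the left-hand side lies in $A$, we get $x \in A$; but then $r(x) = x$, so the equation becomes $f(x) = x$, placing $x \in \Fix(f)$. Thus $\Fix(g) = \Fix(f)$ as sets, so in particular $\#\Fix(g) = \#\Fix(f)$, completing the argument.

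The only subtle point—and the step I would guard most carefully—is the reverse inclusion, where the fact that $g$ takes values in $A$ is exactly what forces any fixed point of $g$ to lie in $A$, after which the retraction identity $r|_A = \id_A$ collapses $g$ to $f$ on $A$. There is no real obstacle here beyond keeping track of where each map sends points; the construction $g = i \circ f \circ r$ is forced once one notices that restricting $g$ to $A$ must recover $f$.
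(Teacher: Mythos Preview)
Your proof is correct and follows the same approach as the paper: build $g = i \circ f \circ r$ and verify $\Fix(g) = \Fix(f)$. If anything, your argument is more careful than the paper's, which compresses the fixed-point verification into a single line; your explicit treatment of the reverse inclusion (using that $g$ takes values in $A$ to force $x \in A$, then invoking $r|_A = \id_A$) is exactly the right justification.
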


\begin{proof}
Let $f: A \to A$ be $\kappa$-continuous.
Let $r: X \to A$ be a $\kappa$-retraction.
Let $i: A \to X$ be the inclusion function. By
Theorem~\ref{composition}, 
$G=i \circ f \circ r: X \to X$ is continuous.
Further,  $G(x)=f(x)$ if and only if
$x \in A$, so $\Fix(G) = \Fix(f)$. Since $f$ was taken arbitrarily, the 
assertion follows.
\end{proof}

\begin{remark}
{\rm
We do not have an analog to Theorem~\ref{retractF}
by replacing fixed point spectra by spectra of
identity maps. E.g, in Example~\ref{wedgeOfCycles}
we have $\{0,\#A\} \subseteq S(\id_A)$, and
$A$ is a retract of $X$,
but $X$ is rigid, so $S(\id_X) = \{\#X\}$. However,
we have the following
Corollaries~\ref{FforDeformation} and~\ref{FforInterval}.}
\end{remark}

\begin{cor}
\label{FforDeformation}
Let $A$ be a deformation retract of $X$. Then
$S(\id_A) \subseteq S(\id_X) \subseteq F(X)$. In
particular, $\#A \in S(\id_X)$. 
\end{cor}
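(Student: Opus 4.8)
The plan is to treat the two inclusions separately. The inclusion $S(\id_X)\subseteq F(X)$ is immediate from the definitions: any $k\in S(\id_X)$ equals $\#\Fix(g)$ for some continuous $g:X\to X$ with $g\simeq\id_X$, and such a $g$ is in particular a continuous self-map, so $k\in F(X)$. The substantive claim is $S(\id_A)\subseteq S(\id_X)$, and the stated consequence $\#A\in S(\id_X)$ follows at once from it, since $\id_A\simeq\id_A$ gives $\#A=\#\Fix(\id_A)\in S(\id_A)$.

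For the main inclusion I would fix $k\in S(\id_A)$, choose a continuous $g:A\to A$ with $g\simeq\id_A$ and $\#\Fix(g)=k$, and let $r:X\to A$ be the deformation retraction and $i:A\to X$ the inclusion. Following exactly the construction in the proof of Theorem~\ref{retractF}, I would set $G=i\circ g\circ r$. By Theorem~\ref{composition}, $G$ is continuous; and since $G$ takes values in $A$ while $r$ fixes $A$ pointwise, the same computation as in Theorem~\ref{retractF} shows that $G(x)=x$ holds precisely when $x\in A$ and $g(x)=x$. Hence $\Fix(G)=\Fix(g)$ and $\#\Fix(G)=k$.

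It remains to show $G\simeq\id_X$, which is where the deformation-retraction hypothesis enters. Given a homotopy $H:A\times[0,m]_{\Z}\to A$ from $g$ to $\id_A$, I would define $K:X\times[0,m]_{\Z}\to X$ by $K(x,t)=i(H(r(x),t))$ and verify directly that $K$ is a digital homotopy via the three slice conditions of Definition~\ref{htpy-2nd-def}: one has $K(x,0)=i(g(r(x)))=G(x)$ and $K(x,m)=i(r(x))=(i\circ r)(x)$; each path slice $K_x=i\circ H_{r(x)}$ is continuous because $H_{r(x)}$ is a path; and each level slice $K_t=i\circ H_t\circ r$ is continuous by Theorem~\ref{composition}. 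Thus $G\simeq i\circ r$. Since $A$ is a deformation retract, $i\circ r\simeq\id_X$, and transitivity of the homotopy relation chains these to give $G\simeq\id_X$. Therefore $k=\#\Fix(G)\in S(\id_X)$, completing the inclusion.

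The point requiring the most care is the homotopy verification for $K$: rather than invoking any product-adjacency structure on $X\times[0,m]_{\Z}$, I would check the conditions of Definition~\ref{htpy-2nd-def} one at a time, which reduces the whole matter to continuity of compositions (Theorem~\ref{composition}) together with the fact that $H_{r(x)}$ is a path. The only other ingredient is transitivity of $\simeq$, which is standard and permits the chain $G\simeq i\circ r\simeq\id_X$.
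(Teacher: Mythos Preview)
Your proof is correct. The paper states this corollary without proof, treating it as an immediate consequence of the construction in Theorem~\ref{retractF} together with the definition of deformation retract; your argument spells out precisely those details, using the same map $G=i\circ g\circ r$ and adding the homotopy $G\simeq i\circ r\simeq \id_X$ that the deformation-retract hypothesis supplies.
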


\begin{cor}
\label{FforInterval}
Let $a,b \in \Z$, $a < b$. Then
\[S(\id_{[a,b]_{\Z}}, c_1) = F([a,b]_{\Z}, c_1) = \{0,1,\dots,b-a+1\}.
\]
\end{cor}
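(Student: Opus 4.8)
The plan is to prove both equalities at once by a squeezing argument. Write $n = b-a+1 = \#[a,b]_{\Z}$. The general containment noted just before Corollary~\ref{0123X} gives
\[ S(\id_{[a,b]_{\Z}}) \subseteq F([a,b]_{\Z}) \subseteq \{0,1,\dots,n\}, \]
so it suffices to establish the single inclusion $\{0,1,\dots,n\} \subseteq S(\id_{[a,b]_{\Z}})$; this forces all three sets to coincide with $\{0,1,\dots,n\}$.

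For the values $1 \le k \le n$ I would realize each as the cardinality of a deformation retract. The top value $k=n$ is immediate, since $\id$ itself has $n$ fixed points and hence $n \in S(\id_{[a,b]_{\Z}})$. For $1 \le k \le n-1$, set $A_k = [a,a+k-1]_{\Z}$, a subinterval with $\#A_k = k$, and take the retraction $r_k(x) = \min\{x, a+k-1\}$. To see that $A_k$ is a \emph{deformation} retract I would exhibit the explicit homotopy
\[ h(x,t) = \min\{x, \max\{a+k-1, b-t\}\}, \qquad t \in [0, b-a-k+1]_{\Z}, \]
which pushes the tail $[a+k,b]_{\Z}$ one step leftward at a time. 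One checks that $h_0 = \id$ and $h_{b-a-k+1} = i \circ r_k$, that each slice $h_t(x) = \min\{x, c_t\}$ is nonexpansive on $\Z$ and hence $c_1$-continuous, and that each track $h_x(t)$ changes by at most one per step and so is a path. Corollary~\ref{FforDeformation} then yields $k = \#A_k \in S(\id_{[a,b]_{\Z}})$, and ranging over $k$ gives $\{1,\dots,n\} \subseteq S(\id_{[a,b]_{\Z}})$.

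For the remaining value $k=0$ I would use contractibility of the interval: $\id_{[a,b]_{\Z}}$ is homotopic to a constant map $c$, and because $\simeq$ is an equivalence relation the two maps determine the same homotopy fixed point spectrum, so $S(\id_{[a,b]_{\Z}}) = S(c)$. Since $[a,b]_{\Z}$ is connected with $n > 1$, Corollary~\ref{0inSpectrum} gives $0 \in S(c) = S(\id_{[a,b]_{\Z}})$. Combined with the previous paragraph this completes $\{0,\dots,n\} \subseteq S(\id_{[a,b]_{\Z}})$, and the squeeze finishes the proof.

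The only genuine work lies in the deformation-retraction verification of the second step; everything else is bookkeeping with the cited corollaries. I expect the main obstacle to be confirming that the pushing homotopy $h$ is continuous in both variables simultaneously, i.e.\ that it is a bona fide digital homotopy and not merely a family of continuous maps, but the $\min/\max$ description makes this routine since both operations are nonexpansive on $\Z$.
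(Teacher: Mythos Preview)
Your proof is correct and follows essentially the same approach as the paper's: a squeeze using $S(\id)\subseteq F(X)\subseteq\{0,\dots,n\}$, the deformation retractions of $[a,b]_{\Z}$ onto its subintervals $[a,d]_{\Z}$ together with Corollary~\ref{FforDeformation} to obtain the positive values, and contractibility plus the fixed-point-free map of Lemma~\ref{howBEKLLFPP}/Corollary~\ref{0inSpectrum} for the value $0$. The only difference is that you write out the explicit $\min/\max$ homotopy where the paper simply asserts the existence of the deformation retraction, and you route the $k=0$ step through $S(\id)=S(c)$ and Corollary~\ref{0inSpectrum} rather than directly through Theorem~\ref{BEKLLFPP}; these are cosmetic variations, not substantive ones.
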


\begin{proof}
Since $a<b$ and $[a,b]_{\Z}$ is $c_1$-contractible,
it follows from Theorem~\ref{BEKLLFPP} that
$0 \in S(\id_{[a,b]_{\Z}}, c_1)$.
Since for each $d \in [a,b]_{\Z}$ 
there is a $c_1$-deformation of
$[a,b]_{\Z}$ to $[a,d]_{\Z}$, it follows from Corollary~\ref{FforDeformation} that
$\#[a,d]_{\Z} \in  S(\id_{[a,b]_{\Z}}, c_1)$. Thus,
\[F([a,b]_{\Z}, c_1) \subseteq \{i\}_{i=0}^{b-a+1} = S(\id_{[a,b]_{\Z}}, c_1) \subseteq F([a,b]_{\Z}, c_1).
\]
The assertion follows.
\end{proof}

We can generalize this result about intervals to a two-dimensional box in $\Z^2$.

\begin{thm}
\label{Sbox}
Let $X = [1,a]_{\Z} \times [1,b]_{\Z}$, with adjacency 
$\kappa \in \{c_1,c_2\}$. Then
\[ S(\id_X) = F(X) = \{0,1,\dots,ab\}
\]
\end{thm}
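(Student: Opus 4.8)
The plan is to first reduce the two claimed equalities to a single computation of $F(X)$, and then to compute $F(X)$ by peeling points off the box one at a time.

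First I would observe that $X$ is contractible for either adjacency. The column-collapse map sending $(a,j)\mapsto(a-1,j)$ and fixing everything else is continuous for both $c_1$ and $c_2$ (for $c_2$ this also accords with Theorem~\ref{NPm+n}), and the one-step homotopy moving each point of the last column to its left neighbor exhibits $[1,a-1]_{\Z}\times[1,b]_{\Z}$ as a deformation retract of $X$. Iterating these collapses down to the interval $\{1\}\times[1,b]_{\Z}$ and then contracting that interval to a point yields a homotopy from $\id_X$ to a constant map. Once $X$ is known to be contractible, every self-map $f$ of $X$ is homotopic to $\id_X$: composing the contraction with $f$ shows $f$ is homotopic to a constant map, $\id_X$ is homotopic to a constant map, and on a connected image all constant maps are mutually homotopic. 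Consequently $S(\id_X)=\{\#\Fix(g)\mid g\simeq\id_X\}=\{\#\Fix(g)\mid g\in C(X,\kappa)\}=F(X)$, so it suffices to prove $F(X)=\{0,1,\dots,ab\}$.

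The inclusion $F(X)\subseteq\{0,\dots,ab\}$ is automatic, and $0\in F(X)$ by Lemma~\ref{howBEKLLFPP} (using $\#X>1$). For the remaining values $1\le k\le ab$ I would build, for each $k$, a retract of $X$ of cardinality exactly $k$ and then invoke Theorem~\ref{retractF}: since the identity of such a retract $A$ fixes all of its points, $k=\#A\in F(A)\subseteq F(X)$. To produce retracts of every size I would peel the box one point at a time. Concretely, peel the last column from the top down, removing $(a,b),(a,b-1),\dots,(a,1)$ in turn, at each stage retracting the peeled point onto a point of column $a-1$ — onto $(a-1,j-1)$ in the $c_1$ case and onto $(a-1,j)$ in the $c_2$ case (with the evident modification at $j=1$). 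In each case one checks the simple-point condition $N(x)\subseteq N^*(x')$ guaranteeing that the map fixing everything but $x$ and sending $x\mapsto x'$ is continuous; since a composite of retractions is a retraction, every intermediate set is a retract of $X$. After the last column is removed we are left with the genuine rectangle $[1,a-1]_{\Z}\times[1,b]_{\Z}$, so the construction recurses, and when only the first column remains we finish by peeling the interval $\{1\}\times[1,b]_{\Z}$ endpoint by endpoint (or cite Corollary~\ref{FforInterval}). This passes through retracts of every cardinality from $ab$ down to $1$, giving $\{1,\dots,ab\}\subseteq F(X)$.

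The main obstacle is the bookkeeping in this peeling step: one must commit to an explicit removal order, verify $N(x)\subseteq N^*(x')$ for the chosen target at every stage and for \emph{both} adjacencies (the correct target differs between $c_1$ and $c_2$, and the boundary indices $j=1$ and $j=b$ need separate attention), and confirm that the targets always lie in the surviving set. The degenerate cases $a=1$ or $b=1$ reduce directly to Corollary~\ref{FforInterval}, and the statement presumes $ab>1$. It is worth noting why I route the $S(\id_X)$ half through contractibility rather than trying to realize every size as a deformation retract: for $c_1$ a single interior or edge point of a solid rectangle can be retracted away but cannot be moved to an adjacent point while keeping the intermediate maps continuous, so the one-point peels are retractions but not obviously deformation retractions; the contractibility argument sidesteps this entirely and treats both adjacencies uniformly.
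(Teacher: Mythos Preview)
Your proposal is correct, and the high-level reduction (contractibility gives $S(\id_X)=F(X)$, so it suffices to compute $F(X)$) matches the paper exactly. The tactical computation of $F(X)$, however, is organized differently. The paper argues by induction on $b$: the retract $[1,a]_\Z\times[1,b-1]_\Z$ supplies the values $0,\dots,a(b-1)$ via Theorem~\ref{retractF}, and the remaining values $a(b-1)+1,\dots,ab$ are obtained from an explicit family of self-maps $f_t$ that move the first $t$ points of the leftmost column diagonally to $(2,y+1)$. You instead peel single points off the last column, producing a retract of every cardinality from $ab$ down to $1$ and invoking Theorem~\ref{retractF} at each stage. Your approach is more uniform---no auxiliary maps $f_t$ are needed---at the cost of the extra case analysis you flag: the diagonal target $(a-1,j-1)$ is forced in the $c_1$ case (neither $(a-1,j)$ nor $(a,j-1)$ satisfies $N(x)\subseteq N^*(x')$ there), and the boundary index $j=1$ needs separate handling. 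The paper's induction hides most of this bookkeeping inside a single row-collapse retraction, trading it for the need to verify continuity of the $f_t$.
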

\begin{proof}
All self-maps on $[1,a]_\Z \times [1,b]_\Z$ are homotopic to the identity, so it suffices only to show that $F(X) = \{0,1,\dots,ab\}$. 
The proof is by induction on $b$. For $b=1$, our image $X$ is isomorphic to the one-dimensional image $([1,a]_\Z, c_1)$. Thus by Theorem \ref{FforInterval} we have
\[ F(X) = \{0,1,\dots,a\} = \{0,1,\dots,ab\} \]
as desired.

For the inductive step, first note that $[1,a]_\Z \times [1,b-1]_\Z$ is a retract of $X$ (using either $\kappa=c_1$ or $c_2$). Thus by induction and Theorem \ref{retractF} we have
\[ \{0,1,\dots,a(b-1) \} \subseteq F(X). \]
It remains only to show that 
\[ \{a(b-1)+1, a(b-1)+2, \dots, ab\} \subseteq F(X). \]
We do this by exhibiting a family of self-maps of $X$ having these numbers of fixed points.

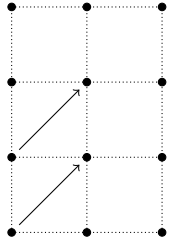
\begin{figure}
\[
\begin{tikzpicture}
\draw[densely dotted] (1,1) grid (3,4);
\foreach \x in {1,...,3} {
 \foreach \y in {1,...,4} {
  \node at (\x,\y) [vertex] {};
 }
}

\newcommand{\pad}{.1}
\foreach \x in {1,2} {
 \draw[->] (1+\pad,\x+\pad) -- (2-\pad,\x+1-\pad);
} 
\end{tikzpicture}
\]
\caption{The map $f_t$ from Theorem \ref{Sbox}, pictured in the case $t=2$. All points are fixed except those with arrows indicating where they map to.\label{f_tfig}}
\end{figure}

Let $t\in \{0,\dots, b-1\}$, and define $f_t:X\to X$ as follows:
\[ f_t(x,y) = \begin{cases} (x,y) \quad &\text{ if $x>1$ or $y>t$}, \\
(x+1,y+1) &\text{ if $x=1$ and $y\le t$} \end{cases} \]
See Figure \ref{f_tfig} for a pictorial depiction of $f_t$. This $f_t$ is well-defined and both $c_1$- and $c_2$-continuous for each $t\in \{0,\dots, b-1\}$ and has $ab-t$ fixed points. Thus we have
\[ \{ab, ab-1, \dots, ab-(b-1) = a(b-1)+1\} \subseteq F(X) \]
as desired.
\end{proof}

\section{Cartesian products and disjoint unions}
In the following, assume $A_i \subset \N$,
$1 \le i \le v$. Define
\[ \bigotimes_{i=1}^v A_i = \left\{\prod_{i=1}^v a_i \mid a_i \in A_i \right\}.
\]
and
\[ \bigoplus_{i=1}^v A_i = \left\{\sum_{i=1}^v a_i \mid a_i \in A_i \right\}.
\]
If $f_i: X_i \to Y_i$, let
$\Pi_{i=1}^v f_i: \Pi_{i=1}^v X_i \to \Pi_{i=1}^v Y_i$
be the product function defined by
\[ \Pi_{i=1}^v f_i(x_1, \ldots x_v) =
   (f_1(x_1), \ldots, f_v(x_v)) \mbox{ for }
   x_i \in X_i.
\]

\begin{thm}
\label{bigOandNP}
Suppose $(X_i,\kappa_i)$ is a digital image, 
$1 \le i \le v$. Let $X = \Pi_{i=1}^v X_i$.
Then
$\bigotimes_{i=1}^v F(X_i,\kappa_i) \subseteq F(X,NP_v(\kappa_1, \ldots, \kappa_v))$.
\end{thm}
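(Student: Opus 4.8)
The plan is to take an arbitrary element of $\bigotimes_{i=1}^v F(X_i,\kappa_i)$ and exhibit a single continuous self-map of $X$ having exactly that many fixed points. By definition such an element has the form $\prod_{i=1}^v n_i$ where $n_i \in F(X_i,\kappa_i)$ for each $i$, so for each $i$ there is a $\kappa_i$-continuous map $f_i : X_i \to X_i$ with $\#\Fix(f_i) = n_i$. The natural candidate is the product map $f = \prod_{i=1}^v f_i$ of the introduction, given by $f(x_1,\ldots,x_v) = (f_1(x_1),\ldots,f_v(x_v))$. The fixed-point count is then immediate: a point $(x_1,\ldots,x_v)$ is fixed by $f$ precisely when $f_i(x_i)=x_i$ for every $i$, so $\Fix(f) = \prod_{i=1}^v \Fix(f_i)$ and hence $\#\Fix(f) = \prod_{i=1}^v \#\Fix(f_i) = \prod_{i=1}^v n_i$, the chosen element. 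Since the $n_i$ are arbitrary, this will place every element of $\bigotimes_{i=1}^v F(X_i,\kappa_i)$ into $F(X, NP_v(\kappa_1,\ldots,\kappa_v))$, once $f$ is known to be continuous.

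The crux is therefore the continuity of $f$, which I would verify using the adjacency criterion for continuity from \cite{Bx99}. Suppose $p=(x_1,\ldots,x_v) \adj_{NP_v} q=(x_1',\ldots,x_v')$. By the definition of $NP_v(\kappa_1,\ldots,\kappa_v)$ with $u=v$, there is a nonempty index set $S$ with $x_i \adj_{\kappa_i} x_i'$ for $i \in S$ and $x_j = x_j'$ for $j \notin S$. For $j \notin S$ the corresponding coordinates of $f(p)$ and $f(q)$ agree, while for $i \in S$ the continuity of $f_i$ gives $f_i(x_i) \adjeq_{\kappa_i} f_i(x_i')$. Setting $S' = \{\, i \in S \mid f_i(x_i) \neq f_i(x_i') \,\}$, I would split into two cases: if $S' = \emptyset$ then $f(p)=f(q)$; if $S' \neq \emptyset$ then $f(p)$ and $f(q)$ differ by a $\kappa_i$-adjacency exactly on the indices of $S'$ and agree on all other coordinates, which is precisely the defining condition for $f(p) \adj_{NP_v} f(q)$. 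In either case $f(p) \adjeq_{NP_v} f(q)$, so $f$ is continuous.

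The main obstacle, though a routine rather than a genuine one, is the bookkeeping in this continuity argument, and in particular the observation that a $\kappa_i$-adjacency between coordinates may collapse to an equality under $f_i$. This is exactly why the $\adjeq$ form of continuity is needed and why $S'$ must be allowed to be a proper, and possibly empty, subset of $S$: without tracking which adjacencies survive, one cannot directly match the clause of the $NP_v$ definition. The only structural fact being exploited is that $NP_v$-adjacency with $u=v$ is coordinatewise in the strong-product sense, so that equality and single-coordinate propagation through each $f_i$ assemble correctly into $NP_v$-adjacency or equality of the images.
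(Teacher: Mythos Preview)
Your proof is correct and follows essentially the same approach as the paper: take the product map $f=\prod_{i=1}^v f_i$ and observe that its fixed points are exactly the tuples of fixed points of the $f_i$. The only difference is that the paper invokes \cite{BxNormal} for the $NP_v$-continuity of the product map, whereas you verify it directly; your direct verification is correct, and your explicit statement that $\Fix(f)=\prod_i \Fix(f_i)$ (not merely $\supseteq$) is in fact slightly more careful than the paper's phrasing.
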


\begin{proof}
Let $f_i: X_i \to X_i$ be $\kappa_i$-continuous. Let
$X = \Pi_{i=1}^v X_i$. Then the
product function
\[ f = \Pi_{i=1}^v f_i(x_1, \ldots x_v): X \to X
\]
is $NP_v(\kappa_1, \ldots, \kappa_v)$-continuous~\cite{BxNormal}.
If $A_i=\{y_{i,j}\}_{j=1}^{p_i}$ is the set of 
distinct fixed points of $f_i$, then each point
$(y_{1,j_1}, \ldots, y_{v,j_v})$, for
$1 \le j_i \le p_i$, is a fixed point of $f$.
The assertion follows.
\end{proof}

We note that the conclusion of Theorem \ref{bigOandNP} cannot in general be strengthened to say that $\bigotimes_{i=1}^v F(X_i) = F(X)$. For example,
if $X = [1,3]_\Z \times [1,3]_\Z$, we have 
$F(X) = \{0,1,\dots,9\}$ by Theorem \ref{Sbox}, but 
\[ F([1,3]_\Z) \otimes F([1,3]_\Z) = \{0,1,2,3\} \otimes \{0,1,2,3\} = \{0,1,2,3,4,6,9\}. \]

We do have a similar result, this time with equality, for a disjoint union of digital images.
\begin{thm}\label{duF}
Let $X = A \sqcup B$. If $A$ and $B$ both have at least 2 points, then
\[ F(X) = F(A) \oplus F(B). \]
\end{thm}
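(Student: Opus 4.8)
The plan is to prove the two inclusions $F(A)\oplus F(B)\subseteq F(X)$ and $F(X)\subseteq F(A)\oplus F(B)$ separately, where $F(A)\oplus F(B)=\{p+q \mid p\in F(A),\, q\in F(B)\}$.

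The forward inclusion is routine. Given $p\in F(A)$ realized by a continuous $f_A:A\to A$ with $\#\Fix(f_A)=p$, and $q\in F(B)$ realized by $f_B:B\to B$ with $\#\Fix(f_B)=q$, I would assemble $f:X\to X$ with $f|_A=f_A$ and $f|_B=f_B$. Because no point of $A$ is adjacent to any point of $B$, every $\kappa$-connected subset of $X$ lies entirely in $A$ or entirely in $B$, so continuity of $f$ follows from that of $f_A$ and $f_B$; and $\Fix(f)=\Fix(f_A)\sqcup\Fix(f_B)$ gives $\#\Fix(f)=p+q$, whence $p+q\in F(X)$.

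For the reverse inclusion, let $f:X\to X$ be continuous. The first observation is that, since $A$ and $B$ have no adjacencies between them, every connected component of $X$ is a component of $A$ or a component of $B$. For a component $C$ of $A$, the image $f(C)$ is connected and therefore lies in a single component of $X$; if that component is not $C$ itself, then $f$ has no fixed point in $C$. Consequently $\Fix(f)$ splits as $(\Fix(f)\cap A)\sqcup(\Fix(f)\cap B)$, and writing $p'=\#(\Fix(f)\cap A)$ and $q'=\#(\Fix(f)\cap B)$ we have $\#\Fix(f)=p'+q'$. It then suffices to show $p'\in F(A)$ and, symmetrically, $q'\in F(B)$.

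To show $p'\in F(A)$ I would build a continuous self-map $f_A:A\to A$ with exactly $p'$ fixed points by working component by component. On each component $C$ of $A$ with $f(C)\subseteq C$ I set $f_A|_C=f|_C$, preserving the fixed points $\Fix(f)\cap C$. On each component $C$ with $f(C)\not\subseteq C$ — which contributes no fixed points of $f$ — I must redefine $f_A$ on $C$ so that it still has no fixed point there while mapping into $A$. The main obstacle is exactly this step, and it is where the hypothesis $\#A\ge 2$ is used: if $\#C\ge 2$, Lemma~\ref{howBEKLLFPP} supplies a continuous fixed-point-free self-map of $C$; if $C=\{c\}$ is a single (hence isolated) point, I instead send $c$ to any other point of $A$, which exists because $\#A\ge 2$ and which is continuous because $c$ has no neighbors. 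Checking that the resulting $f_A$ is continuous (again component by component, using that $A$ has no cross-component adjacencies) and that $\#\Fix(f_A)=p'$ completes this direction; the argument for $q'\in F(B)$ is identical with the roles of $A$ and $B$ reversed. I would also note that the hypothesis $\#A,\#B\ge 2$ is genuinely needed: if, say, $A$ is a single point, it can be mapped off itself into $B$, producing a map with no fixed point in $A$, so that $0$ is attainable as the $A$-count even though $0\notin F(A)=\{1\}$.
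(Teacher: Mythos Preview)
Your argument is correct. The forward inclusion is handled exactly as in the paper. For the reverse inclusion your approach differs from the paper's in a useful way: the paper restricts $f$ to $A$ and $B$ and then does a four-case split according to whether $f(A)\subseteq A$ or $f(A)\subseteq B$ (and likewise for $B$), invoking $0\in F(A)$ and $0\in F(B)$ from Theorem~\ref{0123} in the mixed cases. That case split tacitly uses that $A$ and $B$ are connected, since otherwise $f(A)$ need not land entirely in one piece. You instead work component by component, keeping $f|_C$ when $f(C)\subseteq C$ and manufacturing a fixed-point-free self-map on $C$ (via Lemma~\ref{howBEKLLFPP}, or by sending a singleton to any other point of $A$) when $f(C)\not\subseteq C$. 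This buys you a proof that is fully rigorous even when $A$ or $B$ is disconnected, and it makes the role of the hypothesis $\#A,\#B\ge 2$ more transparent (it is exactly what lets you produce a fixed-point-free replacement on each ``bad'' component). The paper's version is a bit quicker to read in the connected case; yours covers the general statement as written.
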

\begin{proof}
First we show that $F(A) \oplus F(B) \subseteq F(X)$. Take some $k\in F(A) \oplus F(B)$, say $k=m+n$ with $m\in F(A)$ and $n\in F(B)$. That means there are two self-maps $f:A\to A$ and $g:B\to B$ with $\#\Fix(f) = m$ and $\#\Fix(g) = n$. Let $h:X\to X$ be defined by:
\[ h(x) = \begin{cases} f(x) \text{ if $x\in A$} \\
g(x) \text{ if $x \in B$} \end{cases} \]
Then 
\[ \#\Fix(h) = \#\Fix(f) + \#\Fix(g) = m + n = k \] 
and so $k \in F(X)$ as desired.

Next we show $F(X) \subseteq F(A) \oplus F(B)$. Take some $k\in F(X)$, so there is some self-map $f$ with $\#\Fix(f) = k$. Let $f_A:A \to X$ and $f_B:B\to X$ be the restrictions of $f$ to $A$ and $B$. 
Since $X=A\cup B$, we have 
\[ \Fix(f) = \Fix(f_A) \cup \Fix(f_B), \]
and $\Fix(f_A) = \Fix(f) \cap A$ and $\Fix(f_B) = \Fix(f)\cap B$. 
Since $A$ and $B$ are disjoint, the union of the fixed point sets above is disjoint. Thus we have $k = \#\Fix(f_A) + \#\Fix(f_B)$.

Since continuous functions preserve connectedness, we must have $f_A(A) \subseteq A$ or $f_A(A) \subseteq B$. Similarly $f_B(B)\subseteq A$ or $f_B(B)\subseteq B$. We show that $k\in F(A)\oplus F(B)$ in several cases.

In the case where $f_A(A) \subseteq B$ and $f_B(B) \subseteq A$, there are no fixed points of $f_A$ or $f_B$, and thus no fixed points of $f$. Thus $k=0$, and it is true that $k\in F(A)\oplus F(B)$ since $0 \in F(A)$ and $0\in F(B)$ by Theorem \ref{0123}.

In the case where $f_A(A)\subseteq B$ and $f_B(B)\subseteq B$, there are no fixed points of $f_A$, and thus $\Fix(f) =\Fix(f_B)$. In this case in fact $f_B$ is a self-map of $B$, and so 
\[ k = \#\Fix(f) = 0 + \#\Fix(f_B) \in F(A) \oplus F(B) \] 
since $0 \in F(A)$ by Theorem \ref{0123} and $\#\Fix(f_B) \in F(B)$ since $f_B$ is a self-map on $B$. The case where $f_A(A)\subseteq A$ and $f_B(B)\subseteq A$ is similar.

The final case is when $f_A(A) \subseteq A$ and $f_B(B) \subseteq B$. In this case $f_A$ is a self-map of $A$ and $f_B$ is a self-map of $B$. Since $\Fix(f) = \Fix(f_A) \cup \Fix(f_B)$, the $k$ fixed points of $f$ must partition into $m$ fixed points of $f_A$ and $n$ fixed points of $f_B$, where $m+n=k$. Thus $m\in F(A)$ and $n\in F(B)$, and so $k=m+n \in F(A)\oplus F(B)$.
\end{proof}

The assumption above that $A$ and $B$ have at least 2 points is necessary. For example if $A$ and $B$ are each a single point, then $F(X) = \{0,1,2\}$ while $F(A)=F(B)=\{1\}$ and thus $F(A)\oplus F(B) = \{2\}$. 

Since any digital image is a disjoint union of its connected components, we have:
\begin{cor}
Let $X_1, \dots, X_k$ be the connected components of a digital image $X$, and assume that $\#X_i > 1$ for all $i$. Then we have:
\[ F(X) = \bigoplus_{i=1}^k F(X_i) \]
\end{cor}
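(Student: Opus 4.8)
The plan is to prove this by induction on the number $k$ of connected components, using Theorem~\ref{duF} as the engine of the inductive step. The base case $k=1$ is immediate: then $X = X_1$, so $F(X) = F(X_1) = \bigoplus_{i=1}^1 F(X_i)$.

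For the inductive step, suppose the result holds whenever an image has $k-1$ connected components each of more than one point, and let $X$ have components $X_1, \dots, X_k$. I would write $X = A \sqcup X_k$, where $A = X_1 \sqcup \dots \sqcup X_{k-1}$. Since the $X_i$ are connected components of $X$, no point of $A$ is adjacent to any point of $X_k$, so this is a genuine disjoint union in the sense of the definition preceding Theorem~\ref{duF}. To invoke that theorem I need both summands to have at least two points: $X_k$ does by hypothesis, and $A$ contains the component $X_1$ with $\#X_1 > 1$, so $\#A > 1$ as well. Hence Theorem~\ref{duF} gives $F(X) = F(A) \oplus F(X_k)$. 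The connected components of $A$ are exactly $X_1, \dots, X_{k-1}$, each with more than one point, so the inductive hypothesis yields $F(A) = \bigoplus_{i=1}^{k-1} F(X_i)$. Combining these, $F(X) = \left(\bigoplus_{i=1}^{k-1} F(X_i)\right) \oplus F(X_k)$, and it remains only to observe that $\oplus$ is associative, so that this nested sum collapses to $\bigoplus_{i=1}^k F(X_i)$; associativity is immediate from the definition $\bigoplus_{i=1}^v A_i = \{\sum_{i=1}^v a_i \mid a_i \in A_i\}$ together with the associativity of integer addition.

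I expect no real obstacle here, since the entire content is carried by Theorem~\ref{duF}. The only points requiring care are the bookkeeping that each summand in the split retains more than one point, so that the hypotheses of Theorem~\ref{duF} are met at every stage, and the (trivial) associativity of $\oplus$. The hypothesis $\#X_i > 1$ for all $i$ is used precisely to guarantee that $A = X_1 \sqcup \dots \sqcup X_{k-1}$ never degenerates to a single point, matching the necessity of that hypothesis already noted in the discussion following Theorem~\ref{duF}.
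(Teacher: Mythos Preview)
Your proposal is correct and is precisely the natural induction that the paper leaves implicit; the paper states the corollary without proof, simply noting that any digital image is the disjoint union of its connected components, and your argument supplies the straightforward inductive unwinding of Theorem~\ref{duF}.
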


\section{Locations of fixed points}
In many cases, the existence of two fixed points will imply that other fixed points must exist in certain locations. In some cases we will show that $\Fix(f)$ must be connected. We do not
have $\Fix(f)$ connected in general,
as shown by the following.

\begin{exl}
Let $X=\{p_0=(0,0), p_1=(1,0), p_2=(2,0), p_3=(1,1)\}$. 
Let $f: X \to X$ be defined by
\[f(p_0)=p_0,~~~f(p_1)=p_3,~~~f(p_2)=p_2,~~~f(p_3)=p_1.
\]
Then $X$ is $c_2$-connected,
$f \in C(X,c_2)$, and 
$\Fix(f) = \{p_0,p_2\}$ is 
$c_2$-disconnected.
\end{exl}

\begin{lem}
\label{intermediate}
Let $(X,\kappa)$ be a digital image and $f:X\to X$ be continuous. Suppose that $x,x'\in \Fix(f)$ and that $y\in X$ lies on every path of minimal length between $x$ and $x'$. Then $y\in \Fix(f)$.
\end{lem}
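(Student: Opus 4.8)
The plan is to use the fact that digitally continuous maps are non-expansive for the shortest-path (graph) metric, and then to exploit the strong hypothesis that $y$ lies on \emph{every} minimal-length path. Write $d(a,b)$ for the length of a shortest path between $a$ and $b$ (assuming, as the hypothesis implicitly requires, that $x$ and $x'$ lie in a common component, so that minimal-length paths exist). First I would record that $f$ is non-expansive: if $a \adj b$ then $f(a) \adjeq f(b)$, so applying $f$ to a shortest path from $a$ to $b$ yields a walk of length at most $d(a,b)$ joining $f(a)$ and $f(b)$; hence $d(f(a),f(b)) \le d(a,b)$.

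Next I would set up the relevant distances. Let $d = d(x,x')$, and put $a = d(x,y)$ and $b = d(y,x')$. Since $y$ lies on a minimal-length path, $a + b = d$. Because $x$ and $x'$ are fixed, non-expansiveness gives $d(x,f(y)) = d(f(x),f(y)) \le a$ and $d(f(y),x') = d(f(y),f(x')) \le b$. Adding these and comparing with the triangle inequality $d(x,f(y)) + d(f(y),x') \ge d(x,x') = d = a+b$ forces both inequalities to be equalities: $d(x,f(y)) = a$ and $d(f(y),x') = b$. Concatenating a shortest $x$-to-$f(y)$ path with a shortest $f(y)$-to-$x'$ path then produces a minimal-length path $\gamma$ from $x$ to $x'$ that passes through $f(y)$.

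Finally I would conclude $f(y)=y$. The key elementary fact is that along any minimal-length path $r:[0,d]_{\Z}\to X$ from $x$ to $x'$ one has $d(x,r(i)) = i$ for every $i$ (otherwise the path could be shortcut), so the point of that path at distance $a$ from $x$ is unique. Now $f(y)$ is the distance-$a$ point of $\gamma$; and since by hypothesis $y$ lies on \emph{every} minimal-length path, in particular $y$ lies on $\gamma$, where it too is the point at distance $a$ from $x$. By uniqueness these coincide, so $f(y) = y$. The one genuinely delicate point is this last step: non-expansiveness only guarantees that $f(y)$ sits on \emph{some} geodesic, whereas $y$ is a priori on a different one, and there is no reason a general continuous map should carry geodesics to geodesics pointwise. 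The hypothesis that $y$ meets every minimal path is exactly what bridges this gap, letting me place $y$ and $f(y)$ on the common path $\gamma$ and invoke uniqueness of the distance-$a$ vertex there.
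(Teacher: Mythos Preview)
Your proof is correct and shares the same logical skeleton as the paper's: both arguments rest on the observation that $y$ must occupy the unique distance-$a$ position on every geodesic from $x$ to $x'$, and then exhibit a geodesic on which $f(y)$ also sits at distance $a$. The difference lies only in how that geodesic through $f(y)$ is produced. The paper pushes an entire minimal path $p$ forward by $f$, notes that $f\circ p$ is again a length-$k$ path from $x$ to $x'$ and hence minimal, and concludes $(f\circ p)(i)=y$, i.e.\ $f(y)=f(p(i))=y$. You instead isolate the non-expansive property of $f$ for the shortest-path metric, combine it with the triangle inequality to force $d(x,f(y))=a$ and $d(f(y),x')=b$, and then manufacture the needed geodesic by concatenating two fresh shortest paths. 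Your route is a touch more metric-geometric and cleanly sidesteps the small check that $f\circ p$ really is a minimal path; the paper's route is more direct once that check is made.
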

\begin{proof}
Let $k$ be the minimal length of a path from $x$ to $x'$. First we show that $y$ must occur at the same location along any minimal path from $x$ to $x'$. That is, we show that there is some $i \in [0,k]_\Z$ with $p(i)=y$ for every minimal path $p$ from $x$ to $x'$. This we prove by contradiction: assume we have two minimal paths $p$ and $q$ with $p(i)=y=q(j)$ for some $j<i$. Then construct a new path $r$ by traveling from $x$ to $y$ along $q$, and then from $y$ to $x'$ along $p$. Then this path $r$ has length less than the length of $p$, contradicting the minimality of $p$.

Thus we have some $i \in [0,k]_\Z$ with $p(i)=y$ for every minimal path $p$ from $x$ to $x'$. Let $p$ be some minimal path from $x$ to $x'$, and since the endpoints of $p$ are fixed, the path $f(p)$ is also a path from $x$ to $x'$. Furthermore the length of $f(p)$ must be at most $k$, and thus must equal $k$ since this is the minimal possible length of a path from $x$ to $x'$. 

Since both $p$ and $f(p)$ are minimal paths from $x$ to $x'$, we have $p(i) = f(p(i)) = y$, and thus $y=f(y)$ as desired.
\end{proof}

A vertex $v$ of a connected graph
$(X,\kappa)$ is an
{\em articulation point} of
$X$ if $(X \setminus \{v\},\kappa)$ is disconnected.
We have the following immediate
consequences of Lemma~\ref{intermediate}.

\begin{cor}
Let $(X,\kappa)$ be a connected
digital image. Let $v$ be 
an articulation point of $X$.
Suppose $f \in C(X,\kappa)$
has fixed points in distinct
components of $X \setminus \{v\}$. Then $v$ is a fixed
point of $f$.
\end{cor}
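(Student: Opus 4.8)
The plan is to deduce this corollary directly from Lemma~\ref{intermediate} by showing that an articulation point $v$ automatically satisfies the hypothesis of that lemma whenever $f$ has fixed points in two distinct components of $X \setminus \{v\}$. First I would pick fixed points $x, x' \in \Fix(f)$ lying in distinct components of $X \setminus \{v\}$; such points exist by hypothesis. The core observation is that $v$ must lie on \emph{every} path from $x$ to $x'$, not merely every minimal one: since $x$ and $x'$ are in different components of $X \setminus \{v\}$, any path from $x$ to $x'$ in $X$ cannot avoid $v$, for otherwise that path would live entirely in $X \setminus \{v\}$ and connect the two components, contradicting their being distinct.

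Having established that $v$ lies on every path from $x$ to $x'$, it lies in particular on every \emph{minimal-length} path from $x$ to $x'$. This is exactly the hypothesis of Lemma~\ref{intermediate} with $y = v$. Applying that lemma then gives $v \in \Fix(f)$, which is the desired conclusion.

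The main (and really the only) obstacle is justifying the claim that $v$ separates $x$ from $x'$ in the path sense, i.e. that removing $v$ destroys all $x$-to-$x'$ paths. The clean way to see this is to argue that the connected components of a digital image are precisely its path components: in a finite graph, two vertices lie in the same $\kappa$-connected subset if and only if they are joined by a path, so a path from $x$ to $x'$ within $X \setminus \{v\}$ would force $x$ and $x'$ into the same component of $X \setminus \{v\}$. I would phrase this carefully, noting that ``component'' here means connected component in the graph-theoretic sense and that connectedness of a finite digital image coincides with path-connectedness. Everything else is a one-line invocation of the preceding lemma, so the proof should be quite short.

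\begin{proof}
Since $f$ has fixed points in distinct components of $X \setminus \{v\}$, choose $x, x' \in \Fix(f)$ lying in distinct such components. Any path in $X$ from $x$ to $x'$ must pass through $v$: otherwise the path would be a connected subset of $X \setminus \{v\}$ containing both $x$ and $x'$, placing them in the same component of $X \setminus \{v\}$, contrary to our choice. In particular, $v$ lies on every minimal-length path from $x$ to $x'$. By Lemma~\ref{intermediate}, $v \in \Fix(f)$.
\end{proof}
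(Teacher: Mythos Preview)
Your proof is correct and follows exactly the approach the paper intends: the paper presents this corollary as an ``immediate consequence of Lemma~\ref{intermediate}'' without further detail, and your argument simply spells out why an articulation point lies on every path (hence every minimal path) between fixed points in distinct components of $X\setminus\{v\}$, so that Lemma~\ref{intermediate} applies.
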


\begin{cor}
\label{uniqueShortestLem}
Let $(X,\kappa)$ be a digital image and
$f \in C(X,\kappa)$. Suppose
$x,x' \in \Fix(f)$ are such that
there is a unique shortest
$\kappa$-path $P$ in~$X$ from $x$ 
to $x'$. Then $P \subseteq \Fix(f)$.
\end{cor}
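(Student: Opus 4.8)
The plan is to deduce this directly from Lemma~\ref{intermediate} by applying it pointwise along $P$. The key observation is that the uniqueness hypothesis collapses the apparently strong requirement of Lemma~\ref{intermediate}---that $y$ lie on \emph{every} minimal-length path between $x$ and $x'$---into the trivial requirement that $y$ merely lie on $P$ itself.

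First I would note that a shortest $\kappa$-path from $x$ to $x'$ is exactly a path of minimal length between them, so the two phrasings coincide. By hypothesis $P$ is the only such path, so the collection of all minimal-length $\kappa$-paths from $x$ to $x'$ is the singleton $\{P\}$. Consequently, for any $y = P(i)$ lying on $P$, the point $y$ automatically lies on every minimal-length path from $x$ to $x'$, there being only the one.

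Now let $y \in P$ be arbitrary. Since $x, x' \in \Fix(f)$ and $y$ lies on every minimal-length path between them, Lemma~\ref{intermediate} yields $y \in \Fix(f)$. As $y$ was an arbitrary point of $P$, we conclude $P \subseteq \Fix(f)$.

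There is essentially no obstacle here: all of the real work is carried out in Lemma~\ref{intermediate}, and the corollary merely records the special case in which the ``lies on every minimal path'' condition holds vacuously. The only step requiring any care is the translation between the \emph{unique shortest path} of the statement and the \emph{every minimal-length path} wording of the lemma, which the second paragraph supplies.
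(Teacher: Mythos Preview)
Your proof is correct and follows exactly the paper's approach: the paper simply states that the corollary follows immediately from Lemma~\ref{intermediate}, and your argument spells out precisely why---uniqueness of the shortest path makes the ``lies on every minimal path'' hypothesis automatic for each point of $P$.
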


\begin{proof}
This follows immediately from
Lemma~\ref{intermediate}.
\end{proof}

\begin{cor}
Let $(X,\kappa)$ be a digital
image that is a tree. Then
$f \in C(X,\kappa)$ implies
$\Fix(f)$ is $\kappa$-connected.
\end{cor}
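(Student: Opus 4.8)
The plan is to reduce this to Corollary~\ref{uniqueShortestLem} by exploiting the defining property of trees: a tree is a connected graph without cycles, so between any two vertices there is a \emph{unique} path, and this unique path is automatically the unique shortest path between them. Thus the hypothesis of Corollary~\ref{uniqueShortestLem} is satisfied vacuously for every pair of vertices, which is exactly what makes trees special here.

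Concretely, I would first dispose of the trivial cases: if $\#\Fix(f) \le 1$, then $\Fix(f)$ is $\kappa$-connected by definition, so nothing is to prove. Assume then that $\Fix(f)$ contains at least two points, and let $x, x' \in \Fix(f)$ be arbitrary. Since $X$ is a tree, there is a unique $\kappa$-path $P$ from $x$ to $x'$, and by uniqueness it is in particular the unique \emph{shortest} $\kappa$-path from $x$ to $x'$. Applying Corollary~\ref{uniqueShortestLem} to this pair then yields $P \subseteq \Fix(f)$.

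To finish, I would observe that $P$ is a $\kappa$-path lying entirely inside $\Fix(f)$ and joining $x$ to $x'$. Since $x, x'$ were arbitrary members of $\Fix(f)$, every pair of fixed points is joined by a path within $\Fix(f)$, and hence $\Fix(f)$ is $\kappa$-connected, as asserted.

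I do not expect any genuine obstacle in this argument: the single substantive ingredient is the uniqueness of paths in a tree, which is a standard fact from graph theory (the existence of two distinct paths between a pair of vertices forces a cycle). The only points requiring a small amount of care are the degenerate case $\#\Fix(f) \le 1$ and the explicit remark that ``unique path'' implies ``unique shortest path,'' so that Corollary~\ref{uniqueShortestLem} applies verbatim rather than merely in spirit.
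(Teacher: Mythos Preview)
Your proposal is correct and follows essentially the same approach as the paper: invoke the unique-path property of trees to satisfy the hypothesis of Corollary~\ref{uniqueShortestLem}, concluding that the path between any two fixed points lies in $\Fix(f)$. The paper's proof is just a one-line version of your argument, omitting the trivial cases and the explicit remark that the unique path is in particular the unique shortest path.
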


\begin{proof}
This follows from Corollary~\ref{uniqueShortestLem},
since given $x,x'$ in a tree $X$,
there is a unique shortest path
in~$X$ from $x$ to $x'$.
\end{proof}

For a digital cycle, the fixed point set is typically connected. The only exception is in a very particular case, as we see below.

\begin{thm}\label{cycleconnected}
Let $f:C_n\to C_n$ be any continuous map. 
Then $\Fix(f)$ is connected, or 
is a set of 2 nonadjacent points. The latter case occurs only when $n$ is even and the two fixed points are at opposite positions in the cycle.
\end{thm}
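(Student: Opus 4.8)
The plan is to analyze $f$ through a notion of \emph{degree}, obtained by lifting. Identify $C_n$ with $\Z_n$ and write $f(x_i)=x_{g(i)}$ for a function $g:\Z_n\to\Z_n$. For $n\ge 4$ the residues $-1,0,1$ are distinct mod $n$, and continuity of $f$ forces $g(i+1)-g(i)\in\{-1,0,1\}\pmod n$, so I can choose $\delta_i\in\{-1,0,1\}$ with $g(i+1)\equiv g(i)+\delta_i$. (The cases $n\le 3$ are degenerate: $C_3$ is a complete graph and $C_{\le 2}$ is trivial, so they are checked by hand.) Fixing a lift $\tilde g:\Z\to\Z$ with $\tilde g(i+1)=\tilde g(i)+\delta_i$ and $\tilde g(i)\equiv g(i)\pmod n$, the quantity $D=\tilde g(n)-\tilde g(0)=\sum_{i=0}^{n-1}\delta_i$ satisfies $|D|\le n$ and $D\equiv 0\pmod n$, so the degree $d=D/n$ lies in $\{-1,0,1\}$.

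First I would dispose of $d=\pm 1$. If $d=1$ then every $\delta_i=1$, so $g$ is a rotation $g(i)\equiv g(0)+i$; it is the identity (all points fixed) when $g(0)\equiv 0$ and fixed-point-free otherwise. If $d=-1$ then every $\delta_i=-1$ and $g(i)\equiv g(0)-i$ is a reflection, whose fixed points solve $2i\equiv g(0)\pmod n$. This has exactly one solution when $n$ is odd, and either no solution or two solutions differing by $n/2$ when $n$ is even; for $n\ge 4$ even those two solutions are antipodal and nonadjacent. This is precisely the exceptional case in the statement, and it will turn out to be the only source of it.

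The heart of the argument is $d=0$, where I claim $\Fix(f)$ is always connected. Here I would introduce the displacement $h(i)=\tilde g(i)-i$ on $\Z$. Its increments are $h(i+1)-h(i)=\delta_i-1\in\{-2,-1,0\}$, so $h$ is non-increasing, while $h(i+n)=h(i)+D-n=h(i)-n$. A point $x_i$ is fixed exactly when $h(i)\equiv 0\pmod n$, so $\Fix(f)$ is the reduction mod $n$ of $S=h^{-1}(n\Z)\subseteq\Z$. Because $h$ is non-increasing, each level set $I_k=h^{-1}(kn)$ is an interval of integers, and the relation $h(i+n)=h(i)-n$ gives $I_k=I_0-kn$, so all the $I_k$ are integer translates of a single interval $I_0$ by multiples of $n$. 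Hence $S=I_0+n\Z$; and since $h(i+n)\ne h(i)$ forces $\#I_0\le n$, the reduction of $I_0$ modulo $n$ is a single arc of $\Z_n$ (an interval, the whole cycle, a single point, or the empty set). Thus $\Fix(f)$ is connected, finishing this case.

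I expect the $d=0$ case to be the main obstacle: the clean fact that the level sets are intervals and are all $n$-translates of one another is exactly what rules out a disconnected fixed set, and pinning down the bound $\#I_0\le n$ (so that reducing mod $n$ neither artificially disconnects nor overcounts) is the delicate step. Assembling the three cases then yields the theorem: $\Fix(f)$ is connected except precisely when $f$ is an orientation-reversing involution of an even cycle with a fixed antipodal pair, giving the two-nonadjacent-points exception. As a cross-check, one could instead derive connectivity in the $d=0$ case from Corollary~\ref{uniqueShortestLem}, since any two non-antipodal fixed points are then joined by a unique shortest path that must lie in $\Fix(f)$.
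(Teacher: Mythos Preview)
Your argument is correct and complete, but it takes a genuinely different route from the paper. The paper argues directly from two given fixed points: given $x_i,x_j\in\Fix(f)$, it considers the two arcs $p,q$ of $C_n$ joining them. When $|p|\neq|q|$ the shorter one is the unique shortest path and is fixed pointwise by Corollary~\ref{uniqueShortestLem}; when $|p|=|q|$ (forcing $n$ even and $x_i,x_j$ antipodal) a short case analysis on whether $f(p)\in\{p,q\}$ and $f(q)\in\{p,q\}$ shows either one arc is fixed or $\Fix(f)=\{x_i,x_j\}$. Your approach instead classifies $f$ globally by the degree $d\in\{-1,0,1\}$ of a lift $\tilde g$, disposes of $d=\pm1$ as rotations/reflections, and for $d=0$ exploits monotonicity of the displacement $h(i)=\tilde g(i)-i$ to see that $\Fix(f)$ is a single arc. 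What you gain is structural information: you essentially re-prove Theorem~\ref{3maps} along the way and pinpoint the exceptional two-point case as arising precisely from reflections. What the paper's argument buys is economy---it uses only Corollary~\ref{uniqueShortestLem} and a brief path analysis, with no lifting machinery. One small caveat: your closing ``cross-check'' via Corollary~\ref{uniqueShortestLem} alone would not quite suffice, since in the $d=0$ case two fixed points could a priori be antipodal; it is your displacement argument, not the shortest-path lemma, that actually rules out a disconnected two-point fixed set there.
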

\begin{proof}

If $\#\Fix(f) \in \{0,1\}$, then $\Fix(f)$ is connected. 
When $\#\Fix(f)>1$, we show that
if $x_i,x_j\in \Fix(f)$ are two distinct fixed points, then 
either there is a path from $x_i$ to $x_j$ through other fixed points, or that no other points are fixed.

There are two canonical paths $p$ and $q$ from $x_i$ to $x_j$: the two injective paths going in either 
``direction'' around the cycle. Without loss of generality assume $|p| \ge |q|$. This means that $|q|$ is the shortest possible length of a path from $x_i$ to $x_j$.

Consider the case in which 
$|p| > |q|$. In this case $|q|$ is the unique shortest path from $x_i$ to $x_j$, and by Lemma~\ref{uniqueShortestLem}, $q \subseteq \Fix(f)$, and so $x_i$ and $x_j$ are connected by a path of fixed points as desired.

Now consider the case in which $|p|=|q|$. In this case again $|q|$ is the shortest possible length of a path from $x_i$ to $x_j$, and $p$ and $q$ are the only two paths from $x_i$ to $x_j$ having this length. 
Then $f(q)$ is a path from $x_i$ to $x_j$ of length $|q|$, and so we must have either $f(q)=q$ or $f(q)=p$. 
In the former case, 
$q$ is a path of fixed points connecting $x_i$ and $x_j$ as desired. 
In the latter case,
$\Fix(f) \cap q = \{x_i,x_j\}$.

Similarly considering the path $f(p)$, we must have either $f(p)=p$ (in which case $p$ is a path of fixed points connecting $x_i$ and $x_j$); or $f(p)=f(q)$, in 
which case $\Fix(f) \cap p = \{x_i,x_j\}$. 

Considering all cases,
either a minimal-length path
from $x_i$ to $x_j$ is 
contained in $\Fix(f)$, or
$\Fix(f) = \{x_i,x_j\}$.

The second sentence of the theorem follows from our analysis of the various cases. The only case which gives 2 nonadjacent fixed points requires $x_i$ and $x_j$ to be opposite points on the cycle, which requires $n$ to be even.
\end{proof}

\section{Remarks and examples}
In classical topology $M(f)$ is the only interesting homotopy invariant count of 
the number of fixed points. $S(f)$ is not studied in classical topology, since in 
all typical cases (all continuous maps on polyhedra) we would have $S(f) = [M(f),\infty)_\Z$. 

In classical topology the value of $M(f)$ is generally hard to compute. The 
Lefschetz number gives a very rough indication of homotopy invariant fixed point information, and the more sophisticated Nielsen number is a homotopy invariant lower bound for $M(f)$. See \cite{jiang}.



When $X$ is contractible, all self-maps are homotopic, so $S(f)=F(X)$ for any self-map $f$. 
It is natural to suspect that when $X$ is contractible with $\#X > 1$, we will always have $F(X) = \{0,1,\dots,\#X\}$. This is false, however, as the following example shows:
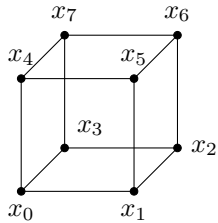
\begin{figure}
\[
\begin{tikzpicture}[scale=1.5] 
\node at (0,0,0) [vertex, label=above right:{$x_3$}] {};
\node at (1,0,0) [vertex, label=right:{$x_2$}] {};
\node at (0,1,0) [vertex, label={$x_7$}] {};
\node at (1,1,0) [vertex, label={$x_6$}] {};
\node at (0,0,1) [vertex, label=below:{$x_0$}] {};
\node at (1,0,1) [vertex, label=below:{$x_1$}] {};
\node at (0,1,1) [vertex, label={$x_4$}] {};
\node at (1,1,1) [vertex, label={$x_5$}] {};
\draw[] (0,0,0) rectangle (1,1,0);
\draw[] (0,0,1) rectangle (1,1,1);
\draw[] (0,0,0) -- (0,0,1);
\draw[] (1,0,0) -- (1,0,1);
\draw[] (0,1,0) -- (0,1,1);
\draw[] (1,1,0) -- (1,1,1);
\end{tikzpicture}
\]
\caption{A contractible image for which $F(X) \neq \{0,1,\dots, \#X\}$.\label{unitcubefig}}
\end{figure}

\begin{exl}
Let $X \subset \Z^3$ be the unit cube of 8 points with $c_1$ adjacency, shown in Figure \ref{unitcubefig}. 
Then $X$ is contractible, so $S(f) = F(X)$ for any 
self-map $f$. By projecting the cube into one of its 
faces, we see that $X$ retracts to $C_4$, and 
since $F(C_4) = \{0,1,2,3,4\}$, we have 
$\{0,1,2,3,4\} \subseteq F(X)$ by Theorem \ref{retractF}.

In fact there are also continuous maps having 5 or 6 fixed points: Let:
\[
g(x_5) = x_0, \quad g(x_6) = x_3, \quad g(x_i) = x_i \text{ for }i\not \in \{5,6\} 
\]
Then $g$ is continuous with $6$ fixed points. Let:
\[ h(x_5) = h(x_7) = x_0, \quad h(x_6) = x_3, \quad h(x_i) = x_i \text{ for }i \not \in \{5,6,7\}\] 
Then $h$ is continuous with $5$ fixed points. Since of course the identity map has 8 fixed points, we have so far shown that $\{0,1,2,3,4,5,6,8\} \subseteq F(X)$.

In fact $7\not \in F(X)$. This follows from
Lemma~\ref{1off}.
We have shown that:
\[ F(X) = \{0,1,2,3,4,5,6,8\}. \]
\end{exl}



The computation of $S(f)$ in general seems to be a difficult and interesting problem. Even in the case of self-maps on the cycle $C_n$, the results are interesting.
First we show that in fact there are exactly 3 homotopy classes of self-maps 
on $C_n$: the identity map $\id(x_i) = x_i$, the 
constant map $c(x_i) = x_0$, and the flip map 
$l(x_i) = x_{-i}$. 

\begin{thm}
\label{3htpyTypes}
Given $f \in C(C_n)$, $f$ is
homotopic to one of: a constant map,
the identity map, or the flip map.
\end{thm}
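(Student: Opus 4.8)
The plan is to dispatch every $f\in C(C_n)$ according to whether it is surjective, and in each case to write down an explicit homotopy to one of the three named maps. The key structural fact driving the argument is that digital continuity bounds the ``step size'': if $x_i\adj x_{i+1}$ then $f(x_i)\adjeq f(x_{i+1})$, so consecutive values of $f$ around the cycle move by at most one position. This is the digital reason there is no analogue of a degree-$2$ self-map, and it is exactly what forces surjective maps to be automorphisms.

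First suppose $f$ is \emph{not} surjective, so its image omits some point $x_j$. Then $f$ factors through $C_n\setminus\{x_j\}$, which is a digital interval (a path on $n-1$ vertices) and hence $c_1$-contractible, just like the intervals $[a,b]_{\Z}$ used in Corollary~\ref{FforInterval}. I would take a contracting homotopy of $C_n\setminus\{x_j\}$ to a point, precompose it with the corestriction of $f$, and then include back into $C_n$; this produces a homotopy from $f$ to a constant map. Here I invoke the standard fact, implicit in the proofs of Theorems~\ref{isoOfRigidIsRigid} and~\ref{RigidOfIsoIsRigid}, that a digital homotopy remains a homotopy after pre- or post-composing with a fixed continuous map. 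Thus every non-surjective $f$ is homotopic to a constant.

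Next suppose $f$ is surjective. Since $C_n$ is finite, $f$ is then a bijection, so continuity upgrades to $x_i\adj x_{i'}\Rightarrow f(x_i)\adj f(x_{i'})$ (an injection cannot collapse an edge). Counting the $n$ edges of $C_n$ shows $f$ is also edge-surjective, hence a graph automorphism of the cycle; the automorphism group is dihedral, so $f$ is a rotation $x_i\mapsto x_{i+c}$ or a reflection $x_i\mapsto x_{c-i}$. To finish, I would verify that the one-step rotation $\rho(x_i)=x_{i+1}$ is homotopic to $\id$ via the one-step homotopy $h$ on $C_n\times[0,1]_{\Z}$ with $h(x_i,0)=x_i$, $h(x_i,1)=x_{i+1}$: each slice $h_t$ is either $\id$ or $\rho$ (both continuous) and each slice $h_{x_i}$ is the length-one path from $x_i$ to its neighbor $x_{i+1}$, so all three clauses of Definition~\ref{htpy-2nd-def} hold. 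Iterating gives $\rho^c\simeq\id$ for every rotation, and writing any reflection as $\rho^c\circ l$, where $l$ is the flip, gives $\rho^c\circ l\simeq\id\circ l=l$ by the same composition-invariance. Hence rotations are homotopic to the identity and reflections to the flip.

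The main obstacle is the surjective case, and specifically the claim that continuity leaves \emph{no} surjection other than the dihedral automorphisms: the chain surjective $\Rightarrow$ bijective $\Rightarrow$ edge-bijective $\Rightarrow$ automorphism is where all the work sits, and it must be stated carefully so that the edge-counting is airtight. The secondary care points are checking that the rotation homotopy genuinely satisfies Definition~\ref{htpy-2nd-def} rather than merely moving each point to a neighbor, and handling the degenerate small cycles: for $C_1$ the statement is vacuous, for $C_2$ the only nontrivial bijection is the swap, which equals $\rho$ and so is homotopic to $\id$, while $C_3=K_3$ is already covered by the general argument since its automorphism group is the dihedral $D_3$ and its non-surjective maps have contractible image.
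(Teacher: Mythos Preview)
Your proof is correct and follows essentially the same line as the paper's: split into non-surjective (image lies in a contractible arc, hence nullhomotopic) versus surjective (bijection, hence cycle automorphism), with rotations being homotopic to $\id$ providing the needed reduction. The paper merely reorders the steps---it first composes $f$ with a rotation to arrange $g(x_0)=x_0$ and then shows by a short induction on indices that $g=\id$ or $g=l$---rather than invoking the dihedral group explicitly, but the content is the same.
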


\begin{proof}
We have noted above that if
$n \le 4$ then $C_n$ is contractible, so every
$f \in C(C_n)$ is homotopic to a
constant map. Thus, in the 
following, we assume $n > 4$.

We can compose $f$ by some rotation $r$ to obtain $g = r\circ f \simeq f$ such that $g(x_0)=x_0$. We will show that $g$ is either the identity, the flip map, or homotopic to a constant map.

If $g$ is not a surjection,
then its continuity implies 
$g(C_n)$ is a connected proper
subset of $C_n$, hence is
contractible. Therefore, $g$ is
homotopic to a constant map.

If $g$ is a surjection, then 
$g$ is a bijection because the domain and codomain of $g$ both have cardinality $n$. By continuity,
$g(x_1) \adj g(x_0) = x_0$. Therefore, either
$g(x_1)=x_{n-1}$ or 
$g(x_1)=x_{1}$.

If $g(x_1)=x_{-1}$, then
          continuity and the fact
          that $g$ is a bijection
          yield an easy induction
          showing that
          $g(x_i)=x_{-i}$,
          $0 \le i < n$. Therefore,
          $g$ is the flip map.

If  $g(x_1)=x_{1}$, a similar argument shows that $g$ is the identity.
\end{proof}

In fact the proof of Theorem \ref{3htpyTypes} demonstrates the following stronger statement. Let $r_d:C_n\to C_n$ be the rotation map $r_d(x_i) = x_{i+d}$. The following generalizes Theorem~3.4 of \cite{Bx10}, which states that any map homotopic to the identity must be a rotation. 
\begin{thm}\label{3maps}
Let $f:C_n \to C_n$ be continuous. Then one of the following is true:
\begin{itemize}
\item $f$ is homotopic to a constant map
\item $f$ is homotopic to the identity, and $f = r_d$ for some $d$
\item $f$ is homotopic to the flip map $l$, and $f = r_d \circ l$ for some $d$
\end{itemize}
\end{thm}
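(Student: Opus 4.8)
The plan is to retrace the proof of Theorem~\ref{3htpyTypes}, keeping careful track of the rotation used to normalize $f$, so that I can read off its exact form. The only genuinely new ingredient I need is that every rotation $r_d$ is homotopic to the identity. To see this I would exhibit the map $h: C_n \times [0,d]_{\Z} \to C_n$ given by $h(x_i,t) = x_{i+t}$ (taking $d \ge 0$, and handling negative indices via $r_{-d} = r_{n-d}$): here $h(x_i,0)=x_i$ and $h(x_i,d)=r_d(x_i)$, each slice $h_t = r_t$ is an isomorphism and hence continuous, and each induced path $t \mapsto x_{i+t}$ is continuous because consecutive points of $C_n$ are adjacent. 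Thus $h$ is a homotopy and $r_d \simeq \id$ for all $d$.

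For $n \le 4$ the cycle $C_n$ is contractible, so $f$ is homotopic to a constant and the first alternative holds; so I would assume $n > 4$. Letting $j$ be the index with $f(x_0) = x_j$, I would set $g = r_{-j} \circ f$, so that $g(x_0) = x_0$. Since $r_{-j} \simeq \id$, composition gives $g \simeq f$; and since $r_j \circ r_{-j} = \id$, I recover $f = r_j \circ g$.

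I would then feed $g$ into the case analysis of Theorem~\ref{3htpyTypes}. If $g$ is not surjective, that proof shows $g$ is homotopic to a constant, so $f \simeq g$ is too and the first alternative holds. If $g$ is surjective, the same proof shows $g$ is \emph{exactly} the identity or \emph{exactly} the flip $l$, since the induction on $g(x_1) \in \{x_{-1}, x_1\}$ forces genuine equality. In the former subcase $f = r_j \circ g = r_j$, a rotation homotopic to $\id$ by the first paragraph, giving the second alternative with $d = j$. In the latter subcase $f = r_j \circ l$, and $f \simeq g = l$, giving the third alternative with $d = j$.

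The bulk of the argument is inherited from Theorem~\ref{3htpyTypes}, so I expect no serious obstacle. The one point genuinely requiring care is the first paragraph: without knowing that rotations are homotopic to the identity, a surjective $f$ could only be identified as some bijection of the prescribed form, not as a map in the identity's homotopy class. Everything else is bookkeeping --- inverting $g = r_{-j} \circ f$ to write $f = r_j \circ g$ and reading off the three explicit forms.
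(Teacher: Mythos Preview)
Your proposal is correct and follows essentially the same approach as the paper, which does not give a separate proof but simply remarks that the argument for Theorem~\ref{3htpyTypes} already establishes this stronger statement. You have made explicit the bookkeeping (inverting the normalizing rotation to recover $f = r_j \circ g$) and supplied a proof that every rotation is homotopic to the identity, a fact the paper invokes implicitly when it writes $g = r\circ f \simeq f$.
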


The proof of Theorem \ref{3htpyTypes} also demonstrated that all non-isomorphisms on $C_n$ must be nullhomotopic. Thus all isomorphisms on $C_n$ fall into the second two categories above, and in fact all maps in those two categories are isomorphisms. Thus we obtain:
\begin{cor}
Let $n>4$, and $f:C_n\to C_n$ be an isomorphism with $f\simeq g$ for some $g$. Then $g$ is an isomorphism.
\end{cor}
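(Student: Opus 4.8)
The plan is to read this off as an immediate consequence of the classification in Theorem~\ref{3maps} together with the observations recorded in the two paragraphs just before the corollary. The essential fact I would isolate first is that, for $n > 4$, being an isomorphism is a homotopy invariant among self-maps of $C_n$. Concretely: by Theorem~\ref{3maps} every $f \in C(C_n)$ lies in exactly one of the three homotopy classes (the constant class, the identity class, or the flip class), and for $n > 4$ these three classes are pairwise distinct, since there are exactly three of them. Every map in the identity class has the form $r_d$ and every map in the flip class has the form $r_d \circ l$; all of these are bijections, hence isomorphisms. Conversely, every isomorphism falls into one of these two classes, while every non-isomorphism is nullhomotopic. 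Thus a self-map of $C_n$ is an isomorphism if and only if it is \emph{not} homotopic to a constant map.

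With this characterization in hand, the argument is short. Since $f$ is assumed to be an isomorphism, it lies in the identity class or the flip class, and in particular $f$ is not homotopic to any constant map. Because homotopy is an equivalence relation and $g \simeq f$, the map $g$ belongs to the same homotopy class as $f$; in particular $g$ is not homotopic to a constant map, for otherwise transitivity of $\simeq$ would place $f$ in the constant class as well, contradicting the previous sentence. Applying the characterization in the reverse direction, a map that is not nullhomotopic must be an isomorphism, so $g$ is an isomorphism, as claimed.

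The only step that is not purely formal — and the one I would be careful to invoke rather than reprove — is the assertion that the three homotopy classes are genuinely distinct for $n > 4$ and that membership in the identity or flip class is equivalent to being an isomorphism. Both are already established in the proof of Theorem~\ref{3htpyTypes}, where surjectivity (equivalently bijectivity) of a rotated representative $g = r \circ f$ was shown to be exactly the dividing line between the nullhomotopic case and the identity/flip cases, and where the hypothesis $n > 4$ is what prevents $C_n$ from being contractible and thereby keeps the classes apart. Since no genuinely new computation is required, I expect the write-up to consist essentially of citing these facts and chaining them through the equivalence relation $\simeq$.
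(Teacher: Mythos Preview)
Your proposal is correct and follows essentially the same approach as the paper: the paper's proof is just the two sentences immediately preceding the corollary, which observe that non-isomorphisms on $C_n$ are nullhomotopic while every map in the identity or flip class has the explicit form $r_d$ or $r_d\circ l$ and is therefore an isomorphism, and you simply unpack this into the equivalence ``isomorphism $\Longleftrightarrow$ not nullhomotopic'' and chain it through the relation $g\simeq f$.
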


Now we are ready to compute the values of $S(f)$ for our three classes of self-maps on $C_n$.
\begin{thm}\label{Scycle}
We have $S(f)=\{1\}$ for every $f:C_1\to C_1$.

When $1<n\le 4$, we have $S(f)=\{0,\dots,n\}$ for any $f:C_n \to C_n$.

When $n>4$, let $c$ be any constant map, $\id$ be the identity map, and $l$ be the flip map on $C_n$. We have:
\begin{align*}
S(\id) &= \{0,n\} \\
S(c) &= \{0,1,\dots,\lfloor n/2 \rfloor + 1\} \\
S(l) &= \begin{cases} \{0,1\} &\text{ if $n$ is odd} \\
\{0,2\} &\text{ if $n$ is even} \end{cases}
\end{align*}
\end{thm}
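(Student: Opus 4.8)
The plan is to reduce everything to the homotopy classification already established in Theorems~\ref{3htpyTypes} and~\ref{3maps}, and then to compute the achievable fixed-point counts one homotopy class at a time. The case $C_1$ is immediate, since the only self-map is the identity. For $1<n\le 4$ the cycle $C_n$ is contractible (as noted in the proof of Theorem~\ref{3htpyTypes}), so all self-maps are homotopic and $S(f)=F(C_n)$ for every $f$; Corollary~\ref{0123X} gives $\{0,1,2,3,\#C_n\}\subseteq F(C_n)$, and since always $F(C_n)\subseteq\{0,\dots,n\}$, checking $n=2$ (via the $\#X=2$ clause of Corollary~\ref{0123X}) and $n=3,4$ yields $F(C_n)=\{0,\dots,n\}$ and hence $S(f)=\{0,\dots,n\}$. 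So the substance is the case $n>4$, where by Theorem~\ref{3maps} every continuous self-map is either non-surjective (hence homotopic to a constant), a rotation $r_d$ (homotopic to $\id$), or a reflection $r_d\circ l$ (homotopic to $l$); thus $S(\id)$, $S(c)$, and $S(l)$ are computed by ranging over rotations, non-surjective maps, and reflections respectively.

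The two easy cases are $S(\id)$ and $S(l)$. A rotation $r_d$ fixes $x_i$ iff $i+d\equiv i\pmod n$, i.e.\ $d\equiv 0$; so $r_0=\id$ fixes all $n$ points while every other rotation is fixed-point-free, giving $S(\id)=\{0,n\}$. A reflection $r_d\circ l$ sends $x_i\mapsto x_{d-i}$, so $x_i$ is fixed iff $2i\equiv d\pmod n$. I would then read off $\#\Fix(r_d\circ l)$ from the number of solutions of this congruence and let $d$ range: when $n$ is even the congruence has either two solutions or none according to the parity of $d$, while when $n$ is odd the coefficient $2$ is invertible, so the solution count behaves differently; this determines $S(l)$ in each parity.

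The main work is $S(c)$, i.e.\ the fixed-point counts of non-surjective continuous self-maps. For the \emph{upper bound}, a non-surjective $f$ has image a proper connected subset of $C_n$, i.e.\ an arc $A$, which as a graph is a path (a tree), and $\Fix(f)\subseteq A$. Traversing the cycle once produces a closed walk $f(x_0),f(x_1),\dots,f(x_{n-1}),f(x_0)$ with $n$ steps inside $A$, at most $n$ of which cross an edge of $A$ (the relation $\adjeq$ permits stationary steps). Since a closed walk in a tree crosses each edge of the subtree it spans an even number of times, such a walk can visit at most $\lfloor n/2\rfloor+1$ vertices; hence $\#\Fix(f)\le\#A\le\lfloor n/2\rfloor+1$. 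For the \emph{lower bound}, $0\in S(c)$ by Corollary~\ref{0inSpectrum}, and for each $k$ with $1\le k\le\lfloor n/2\rfloor+1$ I would exhibit a ``fold'' map fixing the arc $\{x_0,\dots,x_{k-1}\}$ and sending the complementary arc back into $\{x_0,\dots,x_{k-1}\}$ along a descending/zig-zag path; the room this requires is exactly the inequality $k\le\lfloor n/2\rfloor+1$. Such a map is continuous, has image $\{x_0,\dots,x_{k-1}\}$ (so is non-surjective, hence homotopic to a constant), and has exactly the $k$ prescribed fixed points. Combining the two bounds gives $S(c)=\{0,1,\dots,\lfloor n/2\rfloor+1\}$.

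I expect the fold construction to be the main obstacle: one must choose the images of the complementary arc so that the induced walk is $1$-Lipschitz and closes up at $f(x_0)$, stays inside $\{x_0,\dots,x_{k-1}\}$, and creates no unintended fixed point, and the parity of $n-2k$ forces a stationary step in the zig-zag when $n$ is odd. The sharp upper bound via the closed-walk-in-a-tree estimate is the other key point; once both are in place, $S(\id)$ and $S(l)$ follow immediately from the classification of Theorem~\ref{3maps}.
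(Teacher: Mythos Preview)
Your overall plan matches the paper's: use Theorem~\ref{3maps} to reduce to the three homotopy classes, then compute $S$ for each. The treatments of $S(\id)$ and $S(l)$ via the congruence $2i\equiv d\pmod n$ are the same as the paper's, and your fold-map construction for the lower bound on $S(c)$ is essentially the paper's as well (the paper builds a single fold with $\lfloor n/2\rfloor+1$ fixed points and then contracts it one step at a time, while you propose a separate fold for each $k$; the content is identical).

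The one genuine difference is your upper bound for $S(c)$. The paper argues by contradiction using Corollary~\ref{uniqueShortestLem}: if a nullhomotopic $f$ had more than $\lfloor n/2\rfloor+1$ fixed points, one finds two fixed endpoints whose complementary arc is the unique shortest path between them, forcing every point to be fixed and hence $f=\id$. Your route is different and more self-contained: you bound the size of the \emph{image} of any non-surjective self-map by observing that $f(x_0),f(x_1),\dots,f(x_{n-1}),f(x_0)$ is a closed walk of $n$ steps in the tree $f(C_n)$, and a closed walk in a tree crosses each edge of its span an even number of times, so the span has at most $\lfloor n/2\rfloor$ edges and hence at most $\lfloor n/2\rfloor+1$ vertices; since $\Fix(f)\subseteq f(C_n)$ the bound follows. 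This avoids the shortest-path lemma entirely and in fact proves the slightly stronger statement that $\#f(C_n)\le\lfloor n/2\rfloor+1$, not merely $\#\Fix(f)\le\lfloor n/2\rfloor+1$.
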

\begin{proof}
When $n=1$, our image is a single point, and the constant map (which is also the identity map) is the only continuous self-map. Thus $S(f) =F(X)= \{1\}$ for every $f:C_1\to C_1$.

When $1<n\le 4$, again all maps are homotopic, and we have $S(f) = F(X)=\{0,\dots,n\}$ for any $f$ by Theorem \ref{0123X}. 

Now we consider $C_n$ with $n>4$, which is the interesting case. 

By Theorem \ref{3maps} the only maps homotopic to $\id$ are rotation maps $r_d$. Since $\#\Fix(r_0) = n$ and $\#\Fix(r_d)=0$ for $d\neq 0$, we have 
\[ S(\id) = \{0,n\}. \]

Now we consider the constant map $c(x_i)=x_0$. 
Let $f \in C(C_n)$ be 
defined as follows.
\[ f(x_i) = \left \{ \begin{array}{ll}
    x_i & \mbox{for } 0 \le i \le \lfloor n/2 \rfloor;  \\
    x_{-i}  &  \mbox{for } \lfloor n/2 \rfloor < i < n.
\end{array}
\right .
\]

\begin{figure}
\[
\begin{tikzpicture}
\foreach \t in {0,...,4} {
 \draw [densely dotted] ({72*\t+90}:1) -- ({72*\t+72+90}:1);
}
\foreach \t in {0,1,4} {
 \node at ({\t*72+90}:1) [vertex, label=$x_\t$] {};
}
\foreach \t in {2,3} {
 \node at ({\t*72+90}:1) [vertex, label=below:$x_\t$] {};
}
\foreach \t in {3,4} {
 \draw [very thick] ({72*\t+90}:1) -- ({72*\t+72+90}:1);
}
\newcommand{\pad}{.1}
\draw [->] (72*1+90:1-\pad) -- (72*4+90:1-\pad);
\draw [->] (72*2+90:1-\pad) -- (72*3+90:1-\pad);
\end{tikzpicture}
\qquad
\begin{tikzpicture}
\foreach \t in {0,...,5} {
 \draw [densely dotted] ({60*\t+90}:1) -- ({60*\t+60+90}:1);
}
\foreach \t in {0,1,5} {
 \node at ({\t*60+90}:1) [vertex, label=$x_\t$] {};
}
\foreach \t in {2,3,4} {
 \node at ({\t*60+90}:1) [vertex, label=below:$x_\t$] {};
}
\foreach \t in {3,4,5} {
 \draw [very thick] ({60*\t+90}:1) -- ({60*\t+60+90}:1);
}
\newcommand{\pad}{.1}
\draw [->] (60*1+90:1-\pad) -- (60*5+90:1-\pad);
\draw [->] (60*2+90:1-\pad) -- (60*4+90:1-\pad);
\end{tikzpicture}
\]
\caption{The map $f$ from Theorem \ref{Scycle} pictured in the cases $n=5$ and $n=6$. All points are fixed except those with arrows indicating where they map to. The path $f(C_n)$ is in bold.\label{Scyclefig}}
\end{figure}
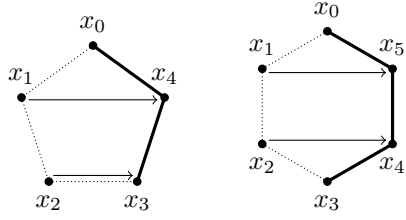

This map ``folds'' the cycle onto a path that is ``about half" of the cycle, with $\lfloor n/2 \rfloor+1$ fixed points. See Figure \ref{Scyclefig}. This 
can be taken as the first
step of a homotopy, in which
successive steps shrink the
path and the number of fixed
points by one per step,
until a constant map is
reached at the end of the
homotopy.
Thus $\{1,\dots,\lfloor n/2 \rfloor +1\} \subseteq S(c)$, and of course $0\in S(c)$ also by Theorem \ref{0123}. 

Thus we have shown there is
a fixed path $p$ between fixed points of $f$,
$x_i,x_j$, of
length at least
$\lfloor n/2 \rfloor + 1$.
We wish to show that in fact $S(c) = \{0,\dots,\lfloor n/2 \rfloor + 1\}$. We show this by contradiction: 
take some nullhomotopic $f$, assume that $k \in S(f)$ with $k>\lfloor n/2 \rfloor + 1$,
and we will show in fact that all points are fixed; this
would be a contradiction 
since $f \neq \id$.
Choose any $x \in C_n \setminus p$. 
Then $x$ lies on the unique shortest path from $x_i$ to $x_j$. Then $x\in \Fix(f)$ by Lemma \ref{uniqueShortestLem}; this gives our desired
contradiction.



Finally we consider the flip map $l(x_i) = x_{-i}$. By Theorem \ref{3maps}, all maps homotopic to $l$ have the form $f(x_i) = r_d \circ l (x_i) = x_{d-i}$. Such a map has a fixed point at $x_i$ if and only if $d=2i \pmod n$. When $d$ is odd there are no solutions, and so $\#\Fix(f) = 0$. When $d$ is even, say $d=2a$, and $n$ is odd, there is one solution $i=a$. When $d$ is even and $n$ is also even, there are two solutions: $i=a$ and $i=a+n/2$. Thus we have some maps with no fixed points, and when $k$ is odd we have some with one fixed point, and when $k$ is even we have some with two. We conclude:
\[ S(l) = \begin{cases} \{0,1\} &\text{ if $n$ is odd} \\
\{0,2\} &\text{ if $n$ is even} \end{cases} \qedhere \]
\end{proof}

By Theorem \ref{3htpyTypes}, any self-map on $C_n$ is homotopic to the constant, identity, or flip. Thus by taking unions of the sets above, we have:
\begin{cor}
\[ F(C_n) = \begin{cases} \{1\} &\text{ if } n=1, \\
\{0,\dots,n\} &\text{ if } 1<n\le 4, \\
\{0,1,\dots,\lfloor n/2 \rfloor + 1, n\} &\text{ if } n>4.\end{cases}
\]
\end{cor}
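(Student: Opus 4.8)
The plan is to observe that the fixed point spectrum of the whole image decomposes as a union of homotopy fixed point spectra, and then to invoke the classification of homotopy classes on $C_n$ to collapse that union to just three terms whose values are already computed. Concretely, for any digital image $X$ we have the identity
\[ F(X) = \bigcup_{f \in C(X)} S(f), \]
since every element of $F(X)$ is $\#\Fix(f)$ for some self-map $f$, and $\#\Fix(f) \in S(f)$ because $f \simeq f$; conversely every element of every $S(f)$ is the fixed point count of some continuous self-map, hence lies in $F(X)$. The key auxiliary remark is that $S(f)$ depends only on the homotopy class of $f$: if $f \simeq g$ then, by transitivity of $\simeq$, the conditions $g' \simeq f$ and $g' \simeq g$ are equivalent, so $S(f) = S(g)$. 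This is the one point that deserves an explicit sentence, though it is immediate.

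With these two facts in hand I would split into the three ranges of $n$. For $n = 1$ the image is a single point and the only self-map is the (constant = identity) map, so $F(C_1) = \{1\}$. For $1 < n \le 4$, the cycle $C_n$ is contractible, so there is a single homotopy class and $S(f) = F(C_n)$ for every $f$; Theorem~\ref{Scycle} (equivalently Theorem~\ref{0123X}) then gives $F(C_n) = \{0,\dots,n\}$. For $n > 4$, Theorem~\ref{3htpyTypes} tells us that every $f \in C(C_n)$ is homotopic to a constant map $c$, the identity $\id$, or the flip $l$; combined with the homotopy invariance of $S$, the big union above collapses to
\[ F(C_n) = S(c) \cup S(\id) \cup S(l). \]

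It then remains to substitute the three spectra from Theorem~\ref{Scycle} and simplify the union, which is where a little care is warranted. We have $S(c) = \{0,1,\dots,\lfloor n/2\rfloor + 1\}$, $S(\id) = \{0,n\}$, and $S(l) \subseteq \{0,1,2\}$. For $n > 4$ we have $\lfloor n/2\rfloor + 1 \ge 3$, so $S(l)$ is entirely absorbed into $S(c)$ and contributes nothing new; likewise the value $0$ from $S(\id)$ is already present. The only genuinely new element contributed by $S(\id)$ is $n$ itself (note $n > \lfloor n/2\rfloor + 1$ for $n>4$), so the union is $\{0,1,\dots,\lfloor n/2\rfloor + 1\} \cup \{n\} = \{0,1,\dots,\lfloor n/2\rfloor + 1, n\}$, matching the claimed formula. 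The main (and only mild) obstacle is this final bookkeeping step: one must confirm that the flip's spectrum is subsumed and that the identity's spectrum supplies exactly the isolated top value $n$, rather than filling the gap between $\lfloor n/2\rfloor + 1$ and $n$; both follow directly from the inequalities just noted, so no further work is needed.
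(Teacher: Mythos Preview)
Your proposal is correct and follows essentially the same approach as the paper: the paper's proof is the single sentence ``By Theorem~\ref{3htpyTypes}, any self-map on $C_n$ is homotopic to the constant, identity, or flip. Thus by taking unions of the sets above, we have [the result],'' and you have simply unpacked this by making explicit the decomposition $F(X)=\bigcup_f S(f)$, the homotopy invariance of $S$, and the bookkeeping that collapses the union. The only difference is level of detail, not method.
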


From the Corollary above we see that $F(C_5)=\{0,1,2,3,5\}$ and $F(C_6) = \{0,1,2,3,6\}$, and thus the formula of Corollary \ref{0123X} is exact in these two cases. The images $C_5$ and $C_6$ are the only examples known to the authors in which this occurs. 

\begin{question}
Is there any digital image $X \not\in\{C_5,C_6\}$ with $\#X > 4$ and $F(X) = \{0,1,2,3,\#X\}$?
\end{question}

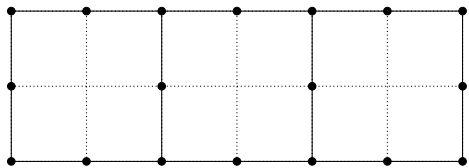
\begin{figure}
\[
\begin{tikzpicture}
\draw[densely dotted] (0,0) grid (6,2);
\foreach \x in {0,...,6} {
 \node at (\x,0) [vertex] {};
 \node at (\x,2) [vertex] {};
 }
\foreach \x in {0,2,4,6}
 \node at (\x,1) [vertex] {};
\draw[step=2] (0,0) grid (6,2);
\end{tikzpicture}
\]
\caption{An image having a self-map with $X(f)=0$\label{xexample}}
\end{figure}

We conclude this section with two interesting examples showing the wide variety of fixed point sets that can be exhibited for other digital images. Tools we use in our discussion 
include the following.

A path
$r: [0,m]_{\Z} \to X$
that is an isomorphism onto $r([0,m]_{\Z})$
is a {\em simple path}.
If a loop
$p$ is an isomorphism onto $p(C_m)$,
$p$ is a {\em simple loop}.

\begin{definition}
{\rm \cite{hmps}}
A simple path or a simple loop in
a digital image $X$ {\em has no
right angles} if no pair of 
consecutive edges of the path or
loop belong to a loop of length~4
in $X$.
\end{definition}

\begin{definition}
{\rm \cite{hmps}}
A {\em lasso in $X$} is a simple loop
$p: C_m \to X$ and a path
$r: [0,k]_{\Z} \to X$ such that $k>0$,
$m \ge 5$, $r(k)=p(x_0)$, and
neither $p(x_1)$ nor $p(x_{m-1})$
is adjacent to $r(k-1)$.

The lasso {\em has no right angles}
if neither $p$ nor $r$ has a right
angle, and no right angle is formed
where $r$ meets $p$, i.e., the
final edge of $r$ and neither of
the edges of $p$ at $p(x_0)$ form
2 edges of a loop of length 4
in~$X$.
\end{definition}

\begin{thm}
{\rm \cite{hmps}}
\label{lassoThm}
Let $X$ be an image in which, 
for any two adjacent points 
$x \adj x' \in X$, there is a 
lasso with no right angles having 
path $r: [0, k]_{\Z} \to X$
with $r(0) = x$ and
$r(1) = x'$. Then $X$ is rigid.
\end{thm}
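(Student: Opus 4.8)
The plan is to assume, for contradiction, that $X$ is not rigid and to trace how the resulting homotopy is forced to ``drag'' an entire no-right-angle loop, which is impossible. First I would reduce to a single homotopy step. If $\id_X \simeq g$ with $g \neq \id_X$ via some $h\colon X\times[0,m]_{\Z}\to X$, let $s$ be the least index with $h_s \neq \id_X$; then $h_{s-1}=\id_X$, and the restriction to the step from time $s-1$ to $s$ is a homotopy from $\id_X$ to a map $g\neq\id_X$ for which $x \adjeq g(x)$ holds for every $x$ (this mirrors the ``$m=1$'' reductions used in Theorem~\ref{wedgeThm} and Example~\ref{wedgeOfCycles}). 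Fix a point $z$ with $g(z)\neq z$, so $z\adj g(z)$, and apply the hypothesis to the ordered pair $z \adj g(z)$ to obtain a lasso with no right angles whose path $r$ satisfies $r(0)=z$, $r(1)=g(z)$, and $r(k)=p(x_0)$ on a simple loop $p\colon C_m\to X$, $m\ge 5$.

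The engine of the argument is a local propagation lemma that isolates the force of the ``no right angles'' condition. Suppose $u\adj v\adj w$ are three consecutive vertices of a simple path or loop with no right angle at $v$. Since the path or loop is an isomorphism onto its image we have $u\not\adj w$, and since there is no right angle at $v$ the only common neighbor of $u$ and $w$ is $v$; hence $N^*(u)\cap N^*(w)=\{v\}$. Now if $g(v)=u$, then continuity across $v\adj w$ gives $g(w)\adjeq g(v)=u$, while the single-step property gives $g(w)\adjeq w$; together $g(w)\in N^*(u)\cap N^*(w)=\{v\}$, so $g(w)=v$. In words: once the homotopy pulls one point of such a path or loop onto a neighbor, the pull must continue to the next vertex, and by iteration it propagates along the whole path and, since a rotation closes up consistently, all the way around the loop.

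Next I would argue that the motion of $z$ is transmitted along the tail $r$ onto the loop, so that the propagation lemma forces $g$ to rotate $p$ by a single step, $g(p(x_i))=p(x_{i\pm 1})$ for every $i$. The point is that such a one-step rotation cannot be absorbed: the defining junction conditions of the lasso---that neither $p(x_1)$ nor $p(x_{m-1})$ is adjacent to $r(k-1)$, together with the absence of a right angle where $r$ meets $p$---prevent the tail from quietly ``following'' the rotating loop. Because the hypothesis supplies a lasso for \emph{every} adjacent ordered pair, $X$ has no leaf or dead end into which the accumulated slack could disappear (a leaf $z$ with unique neighbor $p$ admits no lasso for the ordered pair $(p,z)$). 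Chasing the pull through this omnipresent lasso structure then forces some vertex to be required to map simultaneously to two distinct neighbors. This is exactly the ``breaking of a loop'' described informally in Example~\ref{wedgeOfCycles}, and it is a discontinuity---the contradiction showing no such $g$ exists, so $X$ is rigid.

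The main obstacle is the middle of the previous paragraph: rigorously transmitting the pull from the moved endpoint $z$ onto the loop, and then establishing the global inconsistency. The basic propagation lemma pushes a pull \emph{away} from a vertex whose image is already known, but it yields no information at the free end $r(0)$, so one cannot simply march forward along the tail from $g(r(0))=r(1)$. The argument must instead use the full strength of the hypothesis---that every edge, in particular every edge incident to the tail, begins its own no-right-angle lasso---to rule out the pull terminating or leaking off the loop, and to show the rotation cannot close up consistently. Handling the direction of propagation, ensuring pulled points travel \emph{along} loops rather than escaping to off-loop neighbors, and the bookkeeping of closing the loop are where the real care is needed.
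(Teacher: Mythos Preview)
This theorem is quoted from \cite{hmps} and the present paper does not supply a proof, so there is nothing here to compare your attempt against directly. Your outline is nonetheless the correct skeleton of the argument in \cite{hmps}: reduce to a one-step homotopy so that $x\adjeq g(x)$ for all $x$, isolate a moved point, and use the no-right-angle condition to propagate the pull along a lasso until a contradiction is reached on the loop. Your propagation lemma (on a no-right-angle triple $u\adj v\adj w$, from $g(v)=u$ conclude $g(w)=v$) is exactly the right local tool.

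The directional obstacle you flag is genuine, but it has a one-line fix, and your final paragraph is looking in the wrong place for it. You invoked the hypothesis for the ordered pair $(z,g(z))$, obtaining $r(0)=z$, $r(1)=g(z)$; this gives $g(r(0))=r(1)$, which, as you observe, your lemma cannot push forward. Invoke instead the lasso for the ordered pair $(g(z),z)$---the hypothesis supplies one for \emph{every} ordered adjacent pair---so that $r(0)=g(z)$ and $r(1)=z$. Then $g(r(1))=g(z)=r(0)$, and now your propagation lemma applies verbatim to give $g(r(i))=r(i-1)$ for every $i\ge 1$; in particular $g(p(x_0))=g(r(k))=r(k-1)$. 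The junction conditions of the lasso (non-adjacency of $r(k-1)$ with $p(x_1)$ and with $p(x_{m-1})$, together with no right angle where $r$ meets $p$) then force $N^*(p(x_1))\cap N^*(r(k-1))=\{p(x_0)\}$ and likewise for $p(x_{m-1})$, so $g(p(x_1))=p(x_0)=g(p(x_{m-1}))$. But loop propagation from $g(p(x_1))=p(x_0)$ gives $g(p(x_{m-1}))=p(x_{m-2})\neq p(x_0)$ since $m\ge 5$, a contradiction. Thus a single lasso, correctly oriented, already yields the contradiction; the global ``leaf-free'' or repeated-lasso machinery you sketch in your last paragraph is not needed.
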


\begin{exl}
\label{has1MiddleBar}
Let $X$ be the digital image
\[ X = \left ([0,6]_{\Z} \times \{0,2\} \right )
\cup \{(0,1), (2,1), (4,1), (6,1)\}
\]
(see Figure~\ref{xexample}), with 4-adjacency.

By Theorem~\ref{lassoThm},
this image is rigid. 
It is easy, though a bit tedious, to verify that the hypothesis of Theorem \ref{lassoThm} is satisfied by $X$. For example, in Figure \ref{lassofig} we exhibit a lasso with no right angles for two adjacent points. It is easy to construct such lassos for any pair of adjacent points.

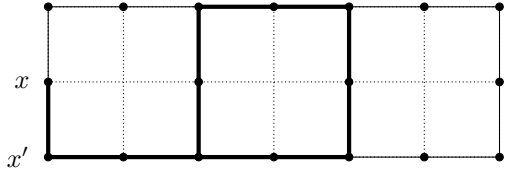
\begin{figure}
\[
\begin{tikzpicture}
\draw[densely dotted] (0,0) grid (6,2);
\foreach \x in {0,...,6} {
 \node at (\x,0) [vertex] {};
 \node at (\x,2) [vertex] {};
 }
\node at (0,1) [label=left:$x$] {};
\node at (0,0) [label=left:$x'$] {};
\foreach \x in {0,2,4,6}
 \node at (\x,1) [vertex] {};
\draw[step=2] (0,0) grid (6,2);
\draw [ultra thick] (0,1) -- (0,0);
\foreach \x in {0,...,3} {
 \draw [ultra thick] (\x,0) -- (\x+1,0);
}
\foreach \x in {2,3} {
 \draw [ultra thick] (\x,2) -- (\x+1,2);
}
\foreach \y in {0,1} {
 \draw [ultra thick] (2,\y) -- (2,\y+1);
 \draw [ultra thick] (4,\y) -- (4,\y+1);
}
\end{tikzpicture}
\]
\caption{A lasso for the points $x=(0,1)$ and $x'=(0,0)$\label{lassofig}}
\end{figure}

Since $X$ is rigid, we have $S(\id) = \{\#X\} = \{18\}$. Let $f:X\to X$ be the 180-degree rotation of $X$. Then $f$ is an isomorphism, and so by
Theorem~\ref{isoOfRigidIsRigid}, $f=f \circ \id_X$ is rigid.
Thus $S(f) = \{\#\Fix(f)\} = \{0\}$. 
In particular this provides an example for the question posed 
in~\cite{bs19} if $X(f)$ could ever equal 0 for a connected image.
\end{exl}

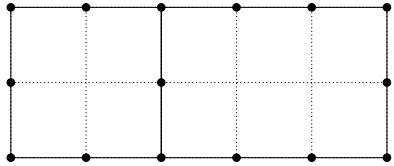
\begin{figure}
\[
\begin{tikzpicture}
\draw[densely dotted] (0,0) grid (5,2);
\foreach \x in {0,...,5} {
 \node at (\x,0) [vertex] {};
 \node at (\x,2) [vertex] {};
 }
\foreach \x in {0,2,5}
 \node at (\x,1) [vertex] {};
\draw (0,0) rectangle (2,2);
\draw (2,0) rectangle (5,2);
\end{tikzpicture}
\]
\caption{An image with many different values for $S(f)$\label{sexample}}
\end{figure}

The following example demonstrates an image which has many different possible sets which can occur as $S(f)$ for various self-maps $f$. 
\begin{exl}
\label{self-mapExamples}
Let
\[X = \left ([0,5]_{\Z} \times \{0,2\} \right )
    \cup \{(0,1), (2,1), (5,1)\}
    \]
(see Figure \ref{sexample}), with 4-adjacency. In this image we have several different homotopy
classes of maps. We will derive sufficient
information about $S$ for some of these to compute $F(X)$.

By Theorem \ref{lassoThm}, $X$ is rigid, so $S(\id) = \{\#X\} = \{15\}$. 

Let $f$ be a vertical reflection. Then $f$ is rigid by Corollary \ref{rigidiso}, and has 3 fixed points, so $S(f) = \{3\}$. 

Let $g$ be the function that
maps the bottom horizontal bar onto the top one, and fixes all other points. 
Then $g$ has 9 fixed points, and is homotopic to a constant map. 
We can retract the image of $g$ down to a point one point at a time, and so $\{0,1,\dots, 9\} \subseteq S(g)$.

Let $h$ be the function which maps the left vertical bar into the middle vertical bar and fixes all other points. 
Then $h$ has 12 fixed points. We can additionally map one or both of the next two points into the middle 
vertical bar to obtain maps homotopic to $h$ with 11 or 10 fixed points. We can do 
these retractions followed by a rotation around the 10-cycle on the right to 
obtain a map homotopic to $h$ with no fixed points. 
Thus $\{0,10,11,12\} \subseteq S(h)$. 

We have $F(X) \cap \{13,14\} = \emptyset$ by
Proposition~\ref{pullThm}, since for all
$x \in X$ we see easily that $P(x) \ge 3$.

We therefore have
\[ F(X) = \{0,1,2,3,4,5,6,7,8,9,10,11,12,15\}. \]

%
%
%
\end{exl}

\section{Further remarks}
We have introduced and
studied several measures
concerning the fixed point
set of a continuous self-map
on a digital image. We anticipate further research
in this area.

\thebibliography{11}
\bibitem{Berge}
C. Berge, {\em Graphs and Hypergraphs}, 
2nd edition, North-Holland, Amsterdam, 1976.

\bibitem{Bx94}
L. Boxer, Digitally Continuous Functions, 
{\em Pattern Recognition Letters} 15 (1994), 833-839.

https://www.sciencedirect.com/science/article/abs/pii/0167865594900124

\bibitem{Bx99}
 L. Boxer, A Classical Construction for the Digital Fundamental Group, Journal of Mathematical Imaging and Vision 10 (1999), 51-62.
 
 https://link.springer.com/article/10.1023/A

\bibitem{Bx10}
L. Boxer, Continuous maps on digital simple 
closed curves,
{\em Applied Mathematics} 1 (2010), 377-386.

https://www.scirp.org/Journal/PaperInformation.aspx?PaperID=3269

 \bibitem{BxNormal}
 L. Boxer, Generalized normal product adjacency in digital topology, 
 {\em Applied General Topology} 18 (2) (2017), 401-427
 
 https://polipapers.upv.es/index.php/AGT/article/view/7798/8718

\bibitem{BxAlt}
L. Boxer, Alternate product adjacencies in digital topology,
{\em Applied General Topology} 19 (1) (2018), 21-53

https://polipapers.upv.es/index.php/AGT/article/view/7146/9777

\bibitem{BEKLL}
 L. Boxer, O. Ege, I. Karaca, J. Lopez, and J. Louwsma, Digital fixed points, approximate fixed points, and universal functions, 
 {\em Applied General Topology}
 17(2), 2016, 159-172.
 
 https://polipapers.upv.es/index.php/AGT/article/view/4704/6675
 
 \bibitem{BK12}
 L. Boxer and I. Karaca,
 Fundamental groups for digital products, 
 {\em Advances and Applications in Mathematical Sciences} 11(4) (2012), 161-180.
 
 http://purple.niagara.edu/boxer/res/papers/12aams.pdf
 

\bibitem{bs19} L. Boxer and P.C. Staecker, Remarks on fixed point assertions in digital topology, \emph{Applied General Topology}, to appear.

https://arxiv.org/abs/1806.06110


\bibitem{hmps} 
J. Haarmann, M.P. Murphy, C.S. Peters, and P.C. Staecker,
Homotopy equivalence in finite digital images,
{\em Journal of Mathematical Imaging and Vision}
53 (2015), 288-302.

https://link.springer.com/article/10.1007/s10851-015-0578-8


\bibitem{jiang} 
B. Jiang, Lectures on Nielsen fixed point theory,
{\em Contemporary Mathematics} 18 (1983).

\bibitem{Khalimsky}
E. Khalimsky,
 Motion, deformation, and homotopy in finite spaces, in
{\em Proceedings IEEE Intl. Conf. on Systems, Man, and Cybernetics},
1987, 227-234.


\bibitem{Rosenfeld}
A. Rosenfeld, `Continuous' functions on digital pictures, {\em Pattern Recognition Letters} 4,
pp. 177-184, 1986.

Available at https://www.sciencedirect.com/science/article/pii/0167865586900176


\bibitem{stae15}
P.C. Staecker, Some enumerations of binary digital images, arXiv:1502.06236, 2015

\end{document}